\title{Traceless Projector on (Mixed) Tensor Products}
\date{}
\DeclareFontFamily{U} {MnSymbolC}{}
\DeclareFontShape{U}{MnSymbolC}{m}{n}{
  <-6> MnSymbolC5
  <6-7> MnSymbolC6
  <7-8> MnSymbolC7
  <8-9> MnSymbolC8
  <9-10> MnSymbolC9
  <10-12> MnSymbolC10
  <12-> MnSymbolC12}{}
\DeclareSymbolFont{MnSyC} {U} {MnSymbolC}{m}{n}
\DeclareMathSymbol{\slashdiv}{\mathbin}{MnSyC}{29}
\newcommand*{\Scale}[2][4]{\scalebox{#1}{\ensuremath{#2}}}
\newcommand{\Sn}[1]{\mathfrak{S}_{#1}}
\newcommand{\bb}[1]{\mathbf{#1}}
\newcommand{\db}[1]{\dot{\mathbf{#1}}}
\newcommand{\dd}[1]{\ddot{\mathbf{#1}}}
\newcommand{\e}{\gamma}
\newcommand{\mione}{\scriptstyle{\scalebox{0.4}[1.0]{\( - \)} 1}}
\newcommand{\mitwo}{\scriptstyle{\scalebox{0.4}[1.0]{\( - \)} 2}}
\newcommand{\mithree}{\scriptstyle{\scalebox{0.4}[1.0]{\( - \)} 3}}
\newcommand{\one}{\scriptstyle{1}}
\newcommand{\two}{\scriptstyle{2}}
\newcommand{\three}{\scriptstyle{3}}
\newcommand{\zero}{\scriptstyle{0}}
\newcommand{\nmione}{\scriptstyle{n\scalebox{0.4}[1.0]{\( - \)} 1}}
\newcommand{\LRp}{\, {\scriptstyle \otimes}\,}
\newcommand{\svdots}{%
  \vbox{
    \scriptsize \baselineskip 2.5pt \lineskiplimit 0pt
    \hbox {.}\hbox {.}\hbox {.}\kern-0.75pt
  }%
}
\def\MT_leftarrow_fill:{%
  \arrowfill@\leftarrow\relbar\relbar}
\def\MT_rightarrow_fill:{%
  \arrowfill@\relbar\relbar\rightarrow}
\newcommand{\xrightleftarrows}[2][]{\mathrel{%
  \raise.55ex\hbox{%
    $\ext@arrow 0359\MT_rightarrow_fill:{\phantom{#1}}{#2}$}%
  \setbox0=\hbox{%
    $\ext@arrow 3095\MT_leftarrow_fill:{#1}{\phantom{#2}}$}%
  \kern-\wd0 \lower.55ex\box0}}
\newtheorem{proposition}{Proposition}[section]
\newtheorem{lemma}[proposition]{Lemma}
\newtheorem{corollary}[proposition]{Corollary}
\newtheorem{theorem}[proposition]{Theorem}
\numberwithin{equation}{section}
\begin{document}
$\ $
\vspace{0.5 cm}
\begin{center}
{\Large \textbf{Construction of the traceless projection of tensors\\via the Brauer algebra
}}

{\small
\vspace{.45cm} 
{\large \textbf{D.~V. Bulgakova, Y.~O. Goncharov$^{1,2}$, T. Helpin$^{2}$ }}

\vskip .35cm \texttt{dvbulgakova@gmail.com $\qquad$ yegor.goncharov@gmail.com $\qquad$ thomas.helpin@gmail.com}

\vskip .35cm $^{1}$ Service de Physique de l’Univers, Champs et Gravitation,
Universit\'e de Mons -- UMONS,\\ 20 Place du Parc, B-7000 Mons, Belgique

\vskip .35cm $^{2}$ Institut Denis Poisson,
Universit\'e de Tours, Universit\'e d’Orl\'eans, CNRS,\\
Parc de Grandmont, 37200 Tours, France

}

\end{center}

\begin{abstract}\noindent
We describe how traceless projection of tensors of a given rank can be constructed in a closed form. On the way to this goal we invoke the representation theory of the Brauer algebra and the related Schur-Weyl dualities. The resulting traceless projector is constructed from purely combinatorial data involving Young diagrams. By construction, the projector manifestly commutes with the symmetric group and is well-adapted to restrictions to $GL$-irreducible tensor representations. We develop auxiliary computational techniques which serve to take advantage of the obtained results for applications. The proposed method of constructing traceless projectors leads to a particular central idempotent in the semisimple regime of the Brauer algebra.
\end{abstract}

\section{Introduction}

Numerous models in classical and quantum field theory assume a vector bundle ({\it i.e.} fields taking values in a vector space), with fibers endowed with a metric which is preserved by local (fiber-wise) symmetry transformations. From the algebraic point of view, the space of multiples of the metric forms a trivial representation of the transformations in question. In this respect, given a tensor power of the vector bundle, subspaces obtained by evaluation of scalar products between certain pairs of tensor components ({\it i.e.} traces of a tensor) are also preserved. According to Wigner, elementary physical constituents of the theory correspond to irreducible representations of the underlying symmetry group/algebra, {\it i.e.} those which contain no non-trivial proper subspaces preserved by symmetry transformations. Thus, in a theory with an invariant metric, tensor fields which represent the elementary physical degrees of freedom are necessarily traceless.
\vskip 4 pt

This work is motivated by the purely engineering question of constructing the traceless projection of a tensor in a closed form. More precisely, given a vector space with a non-degenerate metric (either symmetric or skew-symmetric), can one describe the traceless subspace of any tensor power of this space in a systematic manner? Despite the apparent simplicity of the question, computational complexity
of solving the traceless Ansatz starting from a general linear combination of a tensor and its single, double, triple, {\it etc.}, traces grows drastically with the rank of a tensor. In the current work we propose a shortcut based on the natural algebraic structures and the related representation theories {of the underlying symmetry algebras}.
\vskip 4 pt

With a non-degenerate metric on a vector space $V$ of dimension $N$, either symmetric or skew-symmetric, one canonically identifies a subgroup $G(N)\subset GL(N)$ of invertible linear transformations of the vector space which preserve the metric -- either the orthogonal group $O(N)$ or the symplectic group $Sp(N)$, respectively. Due to considerable similarity between the two cases, they are presented in parallel with the use of the same notations. 
\vskip 4 pt

Representation theory of the classical Lie groups $GL(N)$ and $G(N)$ acting on tensor products $V^{\otimes n}$ is well understood since the seminal works by H.~Weyl \cite{Weyl}. This classical knowledge is supplemented by a complementary piece -- the centralizer algebras $\mathfrak{B}_{n}(N)$ and  $\mathfrak{C}_n(N)\subset\mathfrak{B}_{n}(N)$ commuting with the action of $G(N)$ and $GL(N)$. With any above pair of the mutually centralising algebras at hand, representation theories of the two meet at the same space $V^{\otimes n}$, which gives rise to remarkable interplay between their irreducible representations. This constitutes the subject of the so-called Schur-Weyl duality reviewed in Section \ref{sec:basics}. In particular, there is a one-to-one correspondence between the irreducible representations of $GL(N)$ (respectively, $G(N)$) and $\mathfrak{C}_{n}(N)$ (respectively, $\mathfrak{B}_{n}(N)$) occurring in $V^{\otimes n}$, as well as between the branching rules for the embeddings $\mathfrak{C}_{n}(N)\subset \mathfrak{B}_{n}(N)$ and $G(N) \subset GL(N)$.
\vskip 4 pt

The algebra $\mathfrak{B}_{n}(N)$ is conveniently realised as a homomorphic image in $\mathrm{End}\big(V^{\otimes n}\big)$ of a unital associative algebra introduced by R. Brauer in \cite{Brauer}, referred to as the Brauer algebra $B_{n}(\varepsilon N)$ (with $\varepsilon = 1$ for $G(N) = O(N)$ and $\varepsilon = -1$ for $G(N) = Sp(N)$). The action of elements of $B_{n}(\varepsilon N)$ on $V^{\otimes n}$ is very intuitive and consists of permutations of factors in the tensor product, as well as evaluations of scalar products on certain pairs among them. Permutations form a basis in the symmetric group algebra $\mathbb{C}\Sn{n}\subset B_{n}(\varepsilon N)$, whose homomorphic image in $\mathrm{End}\big(V^{\otimes n}\big)$ gives $\mathfrak{C}_{n}(N)$. The action of $B_{n}(\varepsilon N)$ (and thus, of $\mathbb{C}\Sn{n}$) admits a convenient diagrammatic representation described in Section \ref{sec:Brauer_algebra}. The interplay between the classical Lie groups and the algebras $B_{n}(\varepsilon N)$ and $\mathbb{C}\Sn{n}$ can be summarised via the following diagram\footnote{Here we reproduce the diagram similar to the one presented in \cite{CDVM_Br_blocks}.}:
\begin{figure}[H]
    \centering
    \includegraphics[scale=1.0]{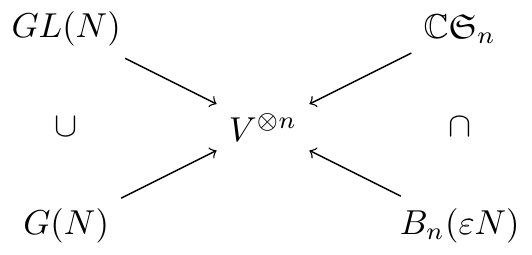}
    \caption{Schur-Weyl dualities for the classical Lie groups $GL(N)$ and $G(N)$.}
    \label{fig:SW_dualities}
\end{figure}
\vskip 4 pt

In our approach, we use the representation theory of $B_{n}(\varepsilon N)$ to write down the {\it universal traceless projector} in a closed form (see Theorem \ref{thm:projector}), totally avoiding the problem of solving any systems of linear equations. We use the term ``universal'' to emphasize the manifest commutation of the traceless projector in question with the action of $\mathbb{C}\Sn{n}$ (equivalently, with $\mathfrak{C}_n(N)$). 
\vskip 4 pt

Taking into account the interplay between representations of $GL(N)$ and $G(N)$, and thus of $\mathfrak{B}_{n}(N)$ and $\mathfrak{C}_{n}(N)$, we also construct the reduced form of the traceless projector via the restriction of the universal traceless projector to a specific representation of $GL(N)$ (see Theorem \ref{thm:reduced_projector}). Recall that irreducible representations of $G(N)$ can be realised as traceless projections of irreducible representations of $GL(N)$ (see, {\it e.g.}, \cite[Section 3.2]{Nazarov_tracelessness} or \cite[Chapter 10]{Hamermesh:100343}). Technically this implies commutativity of the two operations: {\it i)} subtracting traces, {\it ii)} projecting onto a $GL(N)$-irreducible subspace of $V^{\otimes n}$ by applying primitive idempotents in $\mathbb{C}\Sn{n}$ (for example, Young symmetrisers). This is what one commonly encounters in applications concerning irreducible $G(N)$-tensors: the major attention is paid to an appropriate symmetrisation of a tensor, while a compatible projection on the traceless subspace is postponed or even assumed implicitly. The universal traceless projector constructed in this work makes commutativity of the two operations manifest.
\vskip 4 pt

Note that the problem of projecting $V^{\otimes n}$ onto traceless irreducible representations of $G(N)$ in terms of the Brauer algebra was solved in \cite{Nazarov_tracelessness}. The form of projectors presented therein clearly distinguishes the subtraction of traces and the projection onto an irreducible representation of $GL(N)$. The part which is responsible for subtracting traces is specific for each particular primitive idempotent in $\mathbb{C}\Sn{n}$, and the two do not commute. While in our approach, the reduced traceless projector comes from the universal traceless projector, and thus commutes with the projector onto irreducible representations on $GL(N)$. This uniformises the projectors in \cite{Nazarov_tracelessness} and in addition allows one to utilise whatever primitive idempotents in $\mathbb{C}\Sn{n}$.
\vskip 4 pt

The traceless projector presented in Theorem \ref{thm:projector} has the form of a product of linear polynomials in a particular element $A_n\in B_{n}(\varepsilon N)$, which commutes with $\mathbb{C}\Sn{n}$. The element $A_n$ represents the sum of traces among all pairs of indices and is remarkable for its capability of distinguishing the traceless subspace of $V^{\otimes n}$ via its representation-theoretic properties combined with the Schur-Weyl-type duality for $G(N)$: {\it i)} $A_n$ is proportional to identity on irreducible representations of $\mathbb{C}\Sn{n}$ which are subspaces in irreducible representations of $B_{n}(\varepsilon N)$ in $V^{\otimes n}$ (see Lemma \ref{lem:D_block_diagonal}), {\it ii)} the kernel of $A_n$ is exactly the traceless subspace of $V^{\otimes n}$ (see Lemma \ref{lem:KerA}). Therefore, given a non-zero eigenvalue $\alpha$ of $A_n$, the element $\left(1 - \alpha^{-1}\,A_n\right)$ annihilates the corresponding traceful subspace in $V^{\otimes n}$. Taking the product over all non-zero eigenvalues, one arrives at an element in $B_{n}(\varepsilon N)$
\begin{equation}\label{eq:projector_introduction}
    P_n = \prod_{\def\arraystretch{0.8}\begin{array}{c}
        \scriptstyle \alpha\neq 0\,\\ 
        \scriptstyle (\text{eigenvalues of}\;\;A_n)
        \end{array}} \left(1-\frac{1}{\alpha}\,A_n\right)\,,
\end{equation}
which annihilates the subspaces with non-zero eigenvalues, and acts by identity on the ones with the zero eigenvalue, {\it i.e.} the traceless ones. As a result, application of $P_n$ projects $V^{\otimes n}$ onto the traceless subspace. Note that one does not need to diagonalize $A_n$, but only to construct the set of its eigenvalues. The latter are obtained explicitly from the set of skew-shape Young diagrams arising from the branching rules for irreducible representations of $B_{n}(\varepsilon N)$ in $V^{\otimes n}$ upon their restriction to $\mathbb{C}\Sn{n}$ (see Proposition \ref{prop:spectre_A}). A particular useful property about the factorised form \eqref{eq:projector_introduction} consists in a straightforward simplification of $P_n$ when restricted to an irreducible representations of $GL(N)$ in $V^{\otimes n}$, such that one needs to take into account only specific eigenvalues of $A_n$ (see Theorem \ref{thm:reduced_projector}).
\vskip 4 pt

We also aim at adapting our construction to numerical applications. While the combinatorial data (Young diagrams and the derived numeral ingredients) is constructed directly from the related definitions, expanding the factorised expression \eqref{eq:projector_introduction} requires optimisation. This problem is partially resolved due to the alternative \textit{quasi-additive} form of the universal traceless projector (see Corollary \ref{cor:projector_semi-sum}) given as a polynomial expression in $A_n$ whose degree is smaller than that in \eqref{eq:projector_introduction}. Additionally, in Section \ref{sec:bracelets} we formulate the known bijection between the  centraliser algebra of $\mathbb{C}\Sn{n}$, $C_{n}(\delta) \subset B_{n}(\delta)$ (for all $\delta\in\mathbb{C}$), in terms of ternary bracelets, and propose a technique which allows one to expand \eqref{eq:projector_introduction} successively by expressing the left multiplication by $A_n$ as a differential operator on the space of ternary bracelets. Besides, the relation to ternary bracelets allows us to derive the criterion for commutativily of $C_{n}(\delta)$ and of its particular sub-/quotient algebras. An extensive amount of examples demonstrating the computational efficiency of our construction are given in the companion Mathematica notebook which follows the narrative of this article. The notebook operates with a newly developed package \cite{BrauerAlgebraPackage} designed for studies and applications\footnote{For applications to tensor calculus the package is linked to the xAct bundle \cite{xActPackage} via the xBrauer extension.} of the Brauer algebra.
\vskip 4 pt

The choice of $B_{n}(\varepsilon N)$ and its representation theory is not at all unique for the purpose of constructing traceless projectors. While Schur-Weyl-type dualities relate the representation theories of $G(N)$ and $B_{n}(\varepsilon N)$ on the finite-dimensional space $V^{\otimes n}$, one has a duality of another type -- the Howe duality \cite{Howe} -- when one considers an infinite-dimensional subspace of particularly (anti)symmetrised tensors in the tensor algebra $U\subset T(V)$. In this case the subalgebra of transformations of $\mathrm{End}(T(V))$ which centralises the action of $G(N)$ is generated by a Lie algebra $\mathfrak{d}(r)$ (where the number $r\geqslant 2$ is specified by $U$): $\mathfrak{d}(r) = \mathfrak{sp}(2r)$ when $G(N) = O(N)$ and $\mathfrak{d}(r) = \mathfrak{o}(2r)$ when $G(N) = Sp(N)$. Simple $G(N)$-modules are singled out by highest-weight conditions for $\mathfrak{d}(r)$. The projection on the subspace of highest-weight vectors is performed by the so-called {\it extremal projectors} \cite{AST_71}, \cite{AST_73} which can be constructed for any reductive Lie algebra \cite{Zhelobenko_rep_reductive_Lie_alg} (see \cite{Tolstoy_2004} for review). In Section \ref{sec:Howe_duality} we analyse the structure of extremal projectors from the point of view of traceless projection of tensors. We show that the extremal projector for $\mathfrak{d}(r)$ is divisible by the extremal projector for $\mathfrak{sl}(r)$ (Lemma \ref{lem:projector_Howe_factorised}), which is reminiscent of the structure of projectors in \cite{Nazarov_tracelessness}. An interesting open question would be to find a universal traceless projector which projects the whole space $U$ onto its traceless counterpart. Let us mention that our approach, with the representation theory of $B_{n}(\varepsilon N)$ at hand, applies in the context of the fixed rank $n$, and is uniform for tensors of any symmetry.
\vskip 4 pt

As a by-product of the technique applied for constructing the traceless projection of $V^{\otimes n}$, the expression \eqref{eq:projector_introduction} for $P_n\in B_{n}(\delta)$ (with $\delta\in \mathbb{C}$) gives a particular central idempotent in the Brauer algebra when the latter is semisimple (the result presented in Theorem \ref{thm:splitting_idempotent}). Namely, the central idempotent in question is one of the splitting idempotents described in \cite{KMP_central_idempotents} (see also references therein for an overview of the problem of constructing central idempotents in $B_{n}(\delta)$).
\vskip 4 pt

The presentation is organised as follows. In Section \ref{sec:basics} we recall some basic facts about decomposition of tensor products into irreducible representations upon the action of the metric-preserving group and its centraliser algebra, and formulate the related Schur-Weyl dualities. This results in fixing the properties of the sought traceless projector. In Section \ref{sec:Brauer_algebra} we introduce the diagrammatic representation of the Brauer algebra and its action on tensors. Therein we mention some properties of the irreducible representations of $B_{n}(\varepsilon N)$ and consider their decomposition into irreducible $\mathbb{C}\Sn{n}$-summands upon restriction to $\mathbb{C}\Sn{n}$. In Section \ref{sec:traceless_projector} we give explicit expressions for the traceless projection and its restrictions to irreducible representations of $GL(N)$. In Section \ref{sec:bracelets} we introduce auxiliary techniques which make the obtained expressions for traceless projectors accessible for applications. As an outcome of the proposed approach, in Section \ref{sec:splitting_idempotent} we relate the traceless projector to a particular central idempotent in $B_{n}(\delta)$ when the latter is semisimple. Section \ref{sec:summary} serves as a summary where we sum up all the introduced ingredients and construct the traceless projector on $V^{\otimes 4}$ step by step. Technical proofs and some examples are given in the Appendix section.

\section{Metric-preserving groups on tensor products \\and their centraliser algebras}\label{sec:basics}

In this section we recall some basic material about the classical Lie groups $GL(N)$ and $G(N)$ acting on tensor products, with the related Schur-Weyl dualities. Apart from merely fixing the notations, our aim here consists in motivating the usage of the Brauer algebra.
\vskip 4 pt

\paragraph{Tensor representations of classical Lie groups.} To a finite-dimensional $\mathbb{C}$-vector space  $V$ of dimension $\dim V = N$, one associates canonically the group $GL(N)\subset \mathrm{End}(V)$ of all invertible linear transformations of this space. Any pair of non-zero vectors are related by a $GL(N)$-transformation, which means that $V$ is a simple $GL(N)$-module\footnote{Throughout the paper we make no distinction between the terms {\it representation} and {\it module}, which are equivalent when one considers the action of a group/algebra on a vector space. In this context, a {\it simple} (respectively, {\it semisimple}) {\it module} corresponds to an {\it irreducible} (respectively, {\it completely reducible}) {\it representation}.}. The set $\mathrm{End}(V)$ is naturally a $GL(N)$-module under the adjoint action: $F\mapsto RFR^{-1}$ for $F\in \mathrm{End}(V)$ and $R\in GL(N)$.
\vskip 4 pt

If one fixes a basis $\{e_a\}\subset V$ ($a = 1,\dots, N$), any vector is identified with an element of $\mathbb{C}^N$ (the set of its components): $v = e_a v^a$. Here, by the Einstein's convention, for a pair of indices denoted by the same letter (one up and one down) one performs summation. To any $F\in\mathrm{End}(V)$ one associates a square matrix $F^{a}{}_{b}$ via $F(e_b) = e_a\,F^{a}{}_{b}$ (with a slight abuse of notation, we will use the same letter for an element of a vector space and its components with respect to a particular basis). There exists a unique, up to a factor, $GL(N)$-invariant linear function on $\mathrm{End}(V)$, which is called {\it trace}, which reads
\begin{equation}\label{eq:trace}
    \mathrm{tr} \,F = F^{a}{}_a\quad \text{for any $F\in\mathrm{End}(V)$.}
\end{equation}

If $V$ is additionally equipped with a non-degenerate bilinear form (metric) $\langle\,\cdot,\cdot\,\rangle$, symmetric or anti-symmetric, one identifies the group of metric-preserving linear transformations $G(N)\subset GL(N)$:
\begin{equation}\label{eq:orthogonal_group}
    \text{for any}\quad R\in G(N) \quad \text{holds}\quad \langle R(v),R(w)\rangle = \langle v,w\rangle\,.
\end{equation}
In the case of a symmetric metric one has $G(N) = O(N)$ (the orthogonal group), while an anti-symmetric metric leads to $G(N) = Sp(N)$ (the symplectic group) with $N \in 2\mathbb{N}$. As a $G(N)$-module $V$ is simple. Component-wise, the metric gives rise to a non-degenerate matrix $g_{ab} = \langle e_a,e_b\rangle$, whose inverse we denote by $g^{ab}$: one has $g_{ab} = \pm g_{ba}$ and $g^{ac}g_{cb} = \delta^{a}{}_b$. The metric fixes a particular isomorphism between $V$ and its dual space $V^{*}$, which allows transitions between vectors and co-vectors by raising and lowering the indices of the components:
\begin{equation}\label{eq:covariant-contravariant}
    v_{a} = g_{ab}v^{b}\quad \text{or, equivalently}\quad v^{a} = g^{ab} v_b\,.
\end{equation}
To this end, instead of having separately $V$ and $V^{*}$ it is sufficient to work only with the space $V$.
\vskip 4 pt

One can construct the $n$-fold tensor product $V^{\otimes n} = \underbrace{V\otimes \dots \otimes V}_n$ of the space $V$, with the basis $e_{a_1}\otimes \dots\otimes e_{a_n}$,
\begin{equation}\label{eq:tensor_basis}
    \text{so that any}\;\; T\in V^{\otimes n} \;\;\text{can be decomposed as}\;\; T = e_{a_1}\otimes \dots\otimes e_{a_n}\, T^{a_1\dots a_n}\,.
\end{equation}
In this case we will say that $T$ has rank $n$. The space $V^{\otimes n}$ is equipped with a $G(N)$-invariant non-degenerate scalar product in a canonical way:
\begin{equation}\label{eq:metric_tensor_product}
   \langle v_1\otimes \dots \otimes v_n, w_1\otimes\dots \otimes w_n\rangle = \langle v_1,w_1\rangle\dots \langle v_n,w_n\rangle\,.
\end{equation}
The latter allows one to identify the pairs of dual operators $F,F^{*}\in\mathrm{End}(V^{\otimes n})$ by the requirement
\begin{equation}\label{eq:adjoint}
    \left<T_1,F T_2\right> = \left<F^{*} T_1,T_2\right>\quad \text{(for all $T_1,T_2\in V^{\otimes n}$).}
\end{equation}
\vskip 4 pt

The trace \eqref{eq:trace}, in combination with the isomorphism $V\cong V^{*}$, can be viewed as a map $\mathrm{tr}^{(g)}:V^{\otimes 2} \to \mathbb{C}$. As a straightforward generalisation, one considers linear maps $\mathrm{tr}^{(g)}_{ij}: V^{\otimes n} \to V^{\otimes n - 2}$ (with $1\leqslant i<j\leqslant n$) which evaluate the scalar product between the two vectors at the $i$th and $j$th positions:
\begin{equation}\label{eq:traces_tensors}
    \mathrm{tr}^{(g)}_{ij} (v_1\otimes\dots \otimes v_n) = \langle v_i,v_j\rangle\; v_1\otimes \dots \otimes \bcancel{v_i}\otimes \dots \otimes \bcancel{v_j} \otimes \dots \otimes v_n\,.
\end{equation}
Component-wise, $\mathrm{tr}^{(g)}_{ij}$ acts on \eqref{eq:tensor_basis} as $t^{a_1\dots a_n} \mapsto g_{a_i a_j}\,t^{a_1\dots a_i\dots a_j \dots a_n}$. A tensor $T\in V^{\otimes n}$ is called {\it traceless} if for all $1\leqslant i<j\leqslant n$ the corresponding traces vanish, {\it i.e.} $\mathrm{tr}^{(g)}_{ij} T = 0$.
\vskip 4 pt

The space $V^{\otimes n}$ (for all $n\geqslant 1$) is a $GL(N)$-module, with an element $R\in GL(N)$ applied to each tensor component:
\begin{equation}\label{eq:diagonal_embedding}
    R(v_1\otimes \dots \otimes v_n) = Rv_1\otimes \dots \otimes Rv_n\,.
\end{equation}
In terms of the decomposition over the basis \eqref{eq:tensor_basis}, $T^{a_1\dots a_n} \mapsto R^{a_1}{}_{b_1}\dots R^{a_n}{}_{b_n}\,T^{b_1\dots b_n}$\,. As soon as $G(N)\subset GL(N)$, $V^{\otimes n}$ is a $G(N)$-module as well. In the sequel, whenever $V^{\otimes n}$ is considered as a $GL(N)$-module and a $G(N)$-module in parallel, we will call it a {\it module} without specifying the Lie group.
\vskip 4 pt

Due to the classical result of Weyl, the module $V^{\otimes n}$ is semisimple: it decomposes into a direct sum of simple $GL(N)$-modules $V^{(\mu)}$ (on the left) or simple $G(N)$-modules $D^{(\lambda)}$ (on the right): respectively,
\begin{equation}\label{eq:tensor_GL-O_decomposition}
    V^{\otimes n} \cong \bigoplus_{\mu\in \hat{\Sigma}_{n,N}} \big(V^{(\mu)}\big)^{\oplus g_{\mu}}\quad\quad\text{and}\quad\quad V^{\otimes n} \cong \bigoplus_{\begin{array}{c}
        \scriptstyle \lambda\in \hat{\Lambda}_{n-2f,N}\\
        \scriptstyle f = 0,\dots,\lfloor\tfrac{n}{2}\rfloor 
    \end{array}} \big(D^{(\lambda)}\big)^{\oplus h_{\lambda}}\,.
\end{equation}
Here $\lfloor x\rfloor$ denotes the integer part of $x$, while the index sets $\hat{\Sigma}_{n,N}$ and $\hat{\Lambda}_{l,N}$ are described in \eqref{eq:def_Lambda}, \eqref{eq:def_Sigma} and \eqref{eq:hat}.

\vskip 4 pt

In order to recall the structure of irreducible components $D^{(\lambda)}\subset V^{\otimes n}$, note that the property \eqref{eq:orthogonal_group} translated to the tensor 
\begin{equation}\label{eq:pure_trace}
    E = e_a\otimes e_b\, g^{ab}
\end{equation}
implies that the $\mathbb{C}$-span of any tensor power $E^{\otimes f}$ is a trivial $G(N)$-module and, as a consequence, the subspace $E^{\otimes f}\otimes V^{\otimes n - 2f}\subset V^{\otimes n}$ is invariant with respect to the action of $G(N)$. Further, invariant subspaces in $E^{\otimes f}\otimes V^{\otimes n - 2f}$ correspond exactly to invariant subspaces in $V^{\otimes n - 2f}$, which are either traceless or proportional to $E$.
\vskip 4 pt

\paragraph{Centraliser algebras.} A powerful tool for studying the decomposition of $V^{\otimes n}$ into simple components is the {\it centralizer algebra}. The latter is identified as the maximal subalgebra in $\mathrm{End}(V^{\otimes n})$ whose elements commute with any transformation of the Lie group. The prior fact about the centraliser algebras in question is that they are semisimple because the group action is \cite[Theorem 3.5.B]{Weyl}. As a representation of two mutually-commuting algebras of transformations, $V^{\otimes n}$ decomposes into a multiplicity-free direct sum of pairs of simple modules: one of the Lie group, the other of its centraliser algebra. More in detail, denote $\mathfrak{C}_n(N)$ and $\mathfrak{B}_n(N)$ the centraliser algebras for the action of the groups $GL(N)$ and $G(N)$ respectively, and let $\mathfrak{L}^{(\mu)}$ and $\mathfrak{M}_n^{(\lambda)}$ stand for the simple modules over $\mathfrak{C}_n(N)$ and $\mathfrak{B}_n(N)$ respectively. Then $V^{\otimes n}$ admits the following decompositions, with respect to $\big(GL(N),\mathfrak{C}_{n}(N)\big)$- and $\big(G(N),\mathfrak{B}_{n}(N)\big)$-actions:
\begin{equation}\label{eq:Schur-Weyl_raw}
    V^{\otimes n} \cong \bigoplus_{\mu\in \hat{\Sigma}_{n,N}} V^{(\mu)}\otimes \mathfrak{L}^{(\mu)}\quad\quad\text{and}\quad\quad
    V^{\otimes n} \cong \bigoplus_{\begin{array}{c}
        \scriptstyle \lambda\in \hat{\Lambda}_{n-2f,N}\\
        \scriptstyle f = 0,\dots,\lfloor\tfrac{n}{2}\rfloor 
    \end{array}} D^{(\lambda)}\otimes \mathfrak{M}_n^{(\lambda)}\,.
\end{equation}
By comparing \eqref{eq:tensor_GL-O_decomposition} and \eqref{eq:Schur-Weyl_raw}, $g_{\mu} = \dim \mathfrak{L}^{(\mu)}$ and $h_{\lambda} = \dim \mathfrak{M}_n^{(\lambda)}$, {\it i.e.} the elements of the latter module count the multiplicity of the former (and {\it vice versa}). 
\vskip 4 pt

\paragraph{Realisation of centraliser algebras.} The efficiency of applications of the centraliser algebras in question is due to the possibility of parametrising their elements via a ``somewhat enigmatic algebra'' \cite{Weyl}, whose action on $V^{\otimes n}$ is diagrammatic and takes care of certain adjacency relations among the factors $V$ in $V^{\otimes n}$, treating them as ``atomary'' and totally ignoring its ``intrinsic'' vector-space structure. From the fact that $G(N)\subset GL(N)$ one gets the inclusion $\mathfrak{C}_{n}(N)\subset \mathfrak{B}_{n}(N)$, so we concentrate on the latter algebra while the former will be treated via its embedding.
\vskip 4 pt

The algebra $\mathfrak{B}_n(N)$ is conveniently realised as a homomorphic image of an associative algebra $B_{n}(\varepsilon N)$ -- the Brauer algebra \cite{Brauer} (here and in what follows $\varepsilon = 1$ for $G(N) = O(N)$ and $\varepsilon = -1$ when $G(N) = Sp(N)$). The corresponding $\varepsilon$-dependent homomorphism $\mathfrak{r}$ is surjective, and it is also injective for $N\geqslant n$ when $\varepsilon = 1$ and $N\geqslant 2n$ when $\varepsilon = -1$ \cite{Brown_semisimplicity}. The action of $B_{n}(\varepsilon N)$ on $V^{\otimes n}$ is very intuitive (see Section \ref{sec:Brauer_diagrams}), so instead of working with the centraliser algebra $\mathfrak{B}_{n}(N)$ directly, we will address to its elements via the $\mathfrak{r}$-image of the Brauer algebra $B_{n}(\varepsilon N)$. In this respect, instead of the simple $\mathfrak{B}_{n}(N)$-modules $\mathfrak{M}^{(\lambda)}_n$ we will use the (isomorphic) simple $B_{n}(\varepsilon N)$-modules $M^{(\hat{\lambda})}_n$, where $\lambda\mapsto \hat{\lambda}$ is an $\varepsilon$-dependent involutive operation \eqref{eq:hat}.
\vskip 4 pt

The algebra $B_{n}(\varepsilon N)$ contains the symmetric group algebra $\mathbb{C}\Sn{n}$ as a subalgebra, and the $\mathfrak{r}$-image of the latter gives exactly $\mathfrak{C}_{n}(N)$ \cite{Weyl}. So in the sequel we work in terms of $\mathbb{C}\Sn{n}\subset B_{n}(\varepsilon N)$, and use the simple $\mathbb{C}\Sn{n}$-modules $L^{(\hat{\mu})}$ instead of $\mathfrak{L}^{(\mu)}$ (with the same map $\mu\mapsto \hat{\mu}$ as for the $B_{n}(\varepsilon N)$-modules above). 
\vskip 4 pt

Summarising the above, $V^{\otimes n}$ is a $\big(GL(N),\mathbb{C}\Sn{n}\big)$-bimodule or a $\big(G(N),B_{n}(\varepsilon N)\big)$-bimodule. The two decompositions in \eqref{eq:Schur-Weyl_raw} are rewritten, respectively, as follows:
\begin{equation}\label{eq:Schur-Weyl}
    V^{\otimes n} \cong \bigoplus_{\mu\in \hat{\Sigma}_{n,N}} V^{(\mu)}\otimes L^{(\hat{\mu})}\quad\quad\text{and}\quad\quad
    V^{\otimes n} \cong \bigoplus_{\begin{array}{c}
        \scriptstyle \lambda\in \hat{\Lambda}_{n-2f,N}\\
        \scriptstyle f = 0,\dots,\lfloor\tfrac{n}{2}\rfloor 
    \end{array}} D^{(\lambda)}\otimes M_n^{(\hat{\lambda})}\,.
\end{equation}
The interplay between simple $GL(N)$- and $\mathbb{C}\Sn{n}$-modules, as well as $G(N)$- and $B_{n}(\varepsilon N)$-modules, in the decompositions \eqref{eq:Schur-Weyl} is known as {\it Schur-Weyl duality}. It was originally established for the former decomposition \cite{Weyl}, and then adapted to the case of $G(N)$ through the seminal works \cite{Brauer,Brown_Br}, see also \cite{DipperDotyHu_SpBrauer} (Weyl utilised the elements of the $\big(GL(N),\mathbb{C}\Sn{n}\big)$-duality to study the decomposition of the $G(N)$-module $V^{\otimes n}$ \cite{Weyl}). 
\vskip 4 pt

Projection to a simple $GL(N)$-component $V^{(\mu)}$ (respectively, $G(N)$-component $D^{(\lambda)}$) is obtained by fixing an element in the corresponding simple module $L^{(\hat{\mu})}$ (respectively, $M_n^{(\hat{\lambda})}$). A particular well-known use of the classical Schur-Weyl duality consists in application of primitive idempotents in $\mathbb{C}\Sn{n}$ to $V^{\otimes n}$ ({\it e.g.} Young symmetrisers \cite[Chapter 4]{Ful_Har} or orthogonal primitive idempotents \cite{Jucys_idempotents,Murphy}) in order to obtain irreducible $GL(N)$-modules. Primitive orthogonal idempotents in $\mathfrak{B}_{n}(N)$ in terms of the Brauer algebra were constructed in \cite{Nazarov_tracelessness}.
\vskip 4 pt

Before going into technical details about the algebras $\mathbb{C}\Sn{n}\subset B_{n}(\varepsilon N)$ and their representation theory, let us declare
\vskip 4 pt

\noindent {\bf The main goal of the present work.} We aim at constructing the {\it universal traceless projector} $\mathfrak{P}_n$, which is specified by the following natural requirements. First of all, it is an idempotent in $\mathrm{End}(V^{\otimes n})$, {\it i.e.} $\mathfrak{P}_n \mathfrak{P}_n = \mathfrak{P}_n$. Second, for any $T\in V^{\otimes n}$, for all $1\leqslant i < j \leqslant n$:
\begin{equation}\label{eq:projector_properties}
\def\arraystretch{1.5}
    \begin{array}{ll}
        (\mathrm{P}1) & \mathrm{tr}^{(g)}_{ij}(\mathfrak{P}_n T) = 0\quad \text{(the image is traceless),}\\
        (\mathrm{P}2) & \text{if}\quad \mathrm{tr}^{(g)}_{ij}T = 0\,,\quad \text{then}\quad \mathfrak{P}_n T = T\quad \text{(among traceless projectors, $\mathfrak{P}_n$ has maximal rank),}\\
        (\mathrm{P}3) & \text{for any}\;\; s\in\Sn{n}\,,\quad \mathfrak{P}_n\mathfrak{r}(s) \,T = \mathfrak{r}(s)\mathfrak{P}_n \,T \quad\text{($\mathfrak{P}_n$ preserves symmetries of a tensor).}
    \end{array}
\end{equation}
The prior positive fact is that, given $n$ and $N$, the traceless projector satisfying $(\mathrm{P}1)$ and $(\mathrm{P}2)$ exists and is unique \cite[Theorem 5.6A]{Weyl} (see also \cite[Chapter 10]{Hamermesh:100343}). Let us briefly note that the proof is purely representation-theoretic and simply relies on the fact that in a representation space equipped with a non-degenerate invariant scalar product, for any invariant subspace its orthogonal complement is an invariant subspace as well. If one takes the invariant subspace $W\subset V^{\otimes n}$ of all tensors proportional to the metric, then its orthogonal complement $W^{\prime}$ is exactly the subspace of traceless tensors. The fact that $\mathfrak{P}_n$ performs an orthogonal projection assures that $(\mathfrak{P}_n)^{*} = \mathfrak{P}_n$.
\vskip 4 pt

The properties $(\mathrm{P}1)$, $(\mathrm{P}2)$ imply that $\mathfrak{P}_n$ is proportional to identity on the simple $G(N)$-submodules in $V^{\otimes n}$: it acts by identity on those simple modules which occur in $W^{\prime}$ and annihilates all the others. As a result, $\mathfrak{P}_n\in \mathfrak{B}_n(N)$. Note also that permutations of tensor factors preserves the decomposition $V^{\otimes n} \cong W\oplus W^{\prime}$. From the fact that $\mathfrak{P}_n$ is proportional to identity on each direct component and decomposing them into simple $\mathbb{C}\Sn{n}$-modules, one concludes that the property $(\mathrm{P}3)$ holds as a consequence of $(\mathrm{P}1)$ and $(\mathrm{P}2)$. Nevertheless we choose to keep it explicit because it will serve as a starting point in our construction in Section \ref{sec:conjugacy_classes}.
\vskip 4 pt

To this end, one naturally arrives at the idea of application of the Brauer algebra to the construction of $\mathfrak{P}_n$. The properties \eqref{eq:projector_properties} can be verified in a systematic manner, which leads to an element $P_n\in\ B_{n}(\varepsilon N)$ such that $\mathfrak{P}_n = \mathfrak{r}(P_n)$. This result is presented in Theorem \ref{thm:projector}.

\section{The Brauer algebra}\label{sec:Brauer_algebra}
\subsection{Diagrammatic representation and action on tensors}\label{sec:Brauer_diagrams}

\paragraph{The algebra of Brauer diagrams.} In order to introduce the Brauer algebra $B_{n}(\delta)$ (for any $\delta\in \mathbb{C}$) it is convenient to start by recalling the diagrammatic representation of the symmetric group. To any element $s\in\Sn{n}$ one associates a {\it permutation diagram}: two rows of $n$ vertices (nodes) aligned horizontally and placed one above another, with the $i$th node in the upper row and the $s(i)$th node in the lower row joined by a line.
\begin{equation}
s = \ \begin{pmatrix}
    	1 & 2 & 3 & 4 \\
    	3 & 1 & 4 & 2
 \end{pmatrix} \mapsto \raisebox{-.45\height}{\includegraphics[width=60pt,height=60pt]{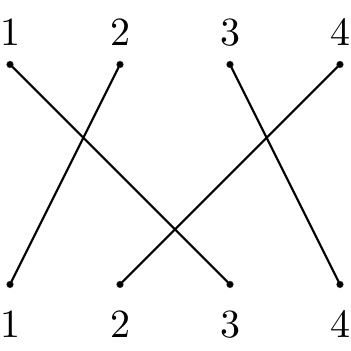}}\quad,\hspace{1cm} t = \ \begin{pmatrix}
    1 & 2 & 3 & 4 \\
    4 & 1 & 2 & 3
  \end{pmatrix} \mapsto \raisebox{-.45\height}{\includegraphics[width=60pt,height=60pt]{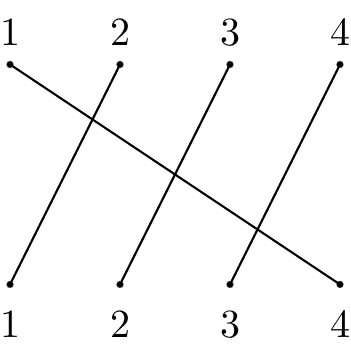}} 
\end{equation}

In order to calculate the product $st$ of two elements $s,t\in\Sn{n}$ diagrammatically, one places the diagram $s$ below $t$ and identifies the upper nodes of the former with the lower nodes of the latter. Then straightening the lines gives the diagram associated with the element $st$:
\begin{equation}
s\, t   \;=\; \raisebox{-.45\height}{\includegraphics[width=60pt,height=56pt]{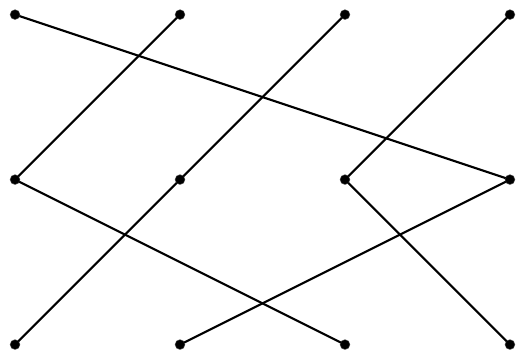}}\;=\; \raisebox{-.45\height}{\includegraphics[width=60pt,height=40pt]{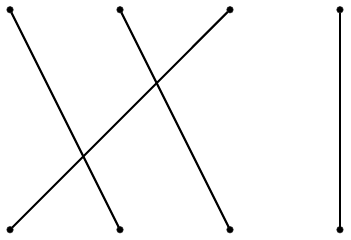}}
\end{equation}

\noindent In the sequel we make no distinction between the elements of $\Sn{n}$ and the corresponding diagrams. Passing to $\mathbb{C}\Sn{n}$ consists in allowing linear combinations of diagrams treated as basis vectors. 
\vskip 4 pt

As a vector space, the Brauer algebra $B_{n}(\delta)$ is obtained by extending the basis of permutation diagrams to {\it Brauer diagrams}. Namely, for the same set of nodes as for the permutation diagrams, each node is required to be an endpoint of exactly one line. For example, one can consider the following two Brauer diagrams $b_1$ and $b_2$ for $n = 5$. 
\begin{equation}
b_1 \,=\,   \raisebox{-.45\height}{\includegraphics[width=60pt,height=40pt]{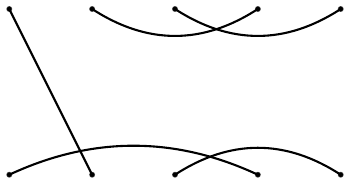}}\;,\hspace{1cm} 
b_2\, =\, \raisebox{-.45\height}{\includegraphics[width=60pt,height=40pt]{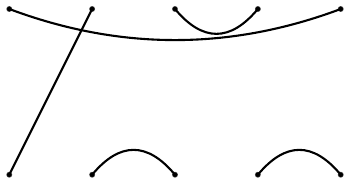}}
\end{equation}

\noindent By a straightforward combinatorial computation one finds $\dim B_{n}(\delta) = \frac{(2n)!}{2^{n}\, n!} = (2n-1)!!$.
\vskip 4 pt

The lines joining an upper node with a lower node will be referred to as {\it vertical lines}, while the lines joining a pair of nodes in the same row will be referred to as {\it arcs}. We will say that a diagram contains $f \geqslant 0$ arcs if there is $f$ arcs in either of its rows. By construction, $\mathbb{C}\Sn{n}$ is embedded in $B_{n}(\delta)$ via diagrams containing $0$ arcs.

\vskip 4 pt

The product of two diagrams $b_1 b_2$ is parametrised by a variable $\delta\in \mathbb{C}$ and defined in a similar fashion as before. Place $b_1$ below $b_2$, identify the upper nodes of the former with the lower nodes of the latter. If $\ell\geqslant 0$ is the number of loops, the resulting vector $b_1 b_2$ is obtained by omitting the loops, straightening the lines and multiplying the so-constructed diagram by $\delta^{\ell}$.
\begin{equation}
b_1 \, b_2 \; =\; \raisebox{-.45\height}{\includegraphics[width=60pt,height=55pt]{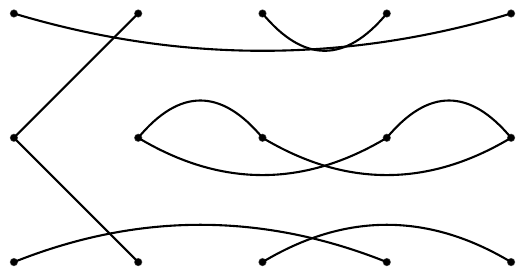}}\hspace{0.3cm}\;=\; \hspace{0.3cm} \delta\; \raisebox{-.45\height}{\includegraphics[width=60pt,height=40pt]{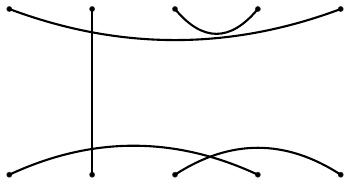}}
\end{equation}
\noindent It can be proven that the so-defined product in $B_{n}(\delta)$ is associative.
\vskip 4 pt

From the definition of the product of Brauer diagrams it follows that the number of arcs in a diagram can not be reduced via multiplication by other diagrams. Let $J^{(f)} \subset B_{n}(\delta)$ denote the span of all diagrams with at least $f$ arcs. These subspaces are two-sided ideals in $ B_{n}(\delta)$ which form a chain of embeddings:
\begin{equation}\label{eq:cell_ideals}
    B_{n}(\delta) = J^{(0)} \supset J^{(1)} \supset \dots \supset J^{\left(\lfloor\frac{n}{2}\rfloor\right)}\,.
\end{equation}
The above chain is the major tool in studying the algebra $B_{n}(\delta)$ since the seminal works \cite{Brown_Br,Brown_semisimplicity}. Note also the simple fact that 
\begin{equation}\label{eq:arc_factor}
    B_{n}(\delta)\slash J^{(1)} \cong \mathbb{C}\Sn{n}\,.
\end{equation}
\vskip 4 pt

\paragraph{Generators and relations.} The algebra $B_n(\delta)$ is generated by the following set of  diagrams $s_i$ and $d_i$ ($i=1\dots n-1$): 

\begin{equation}
s_{i} \;=\; \raisebox{-.4\height}{\includegraphics[width=130pt,height=50pt]{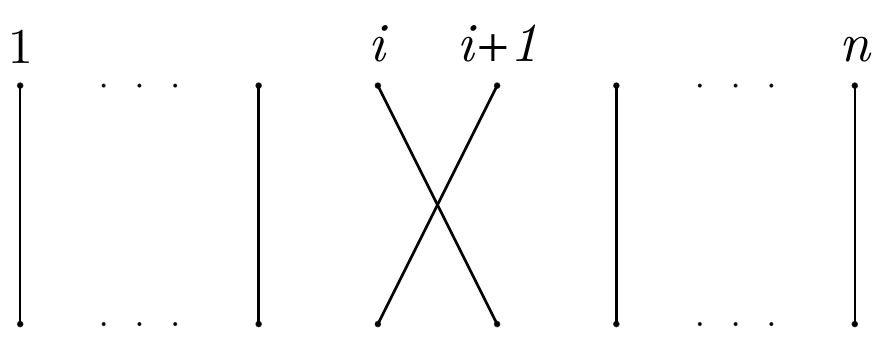}}\;, \quad\quad d_{i} \;= \;\raisebox{-.4\height}{\includegraphics[width=130pt,height=50pt]{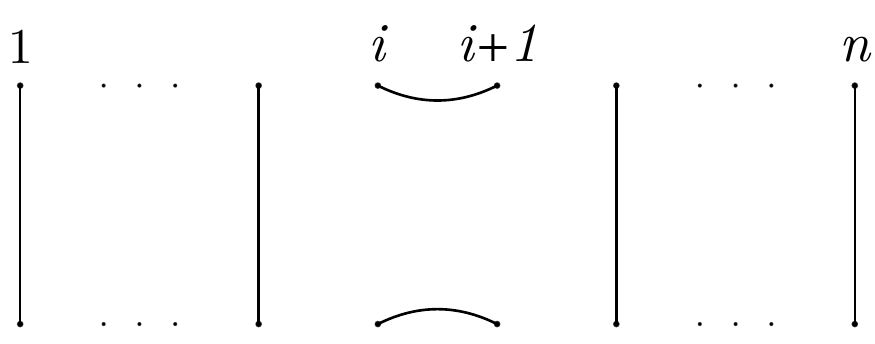}}
\end{equation}

\noindent The generators verify the following set of defining relations\footnote{Equivalently, one can define the algebra $B_n(\delta)$ as generated by $s_i,d_i$ ($i = 1,\dots, n-1$) modulo the defining relations \eqref{eq:rel_1}-\eqref{eq:rel_4}, and prove that it is isomorphic to the algebra of Brauer diagrams introduced above.}:
\begin{align}
    s_i{}^2 & = 1\,,\quad d_i{}^2 = \delta\,d_i\,,\quad d_i s_i = s_i d_i = d_i\,, \label{eq:rel_1}\\
 s_i s_j = s_j& s_i\,, \quad d_i s_j = s_j d_i\,, \quad d_i d_j = d_j d_i\quad \text{for}\quad |i-j|\geqslant 2\,,\label{eq:rel_2}\\
    s_i s_{i+1} s_i & = s_{i+1} s_i s_{i+1}\,,\quad d_{i} d_{i+1} d_{i} = d_i\,,\quad d_{i+1} d_i d_{i+1} = d_{i+1}\,,\label{eq:rel_3}\\
    s_{i} d_{i+1}d_{i} & = s_{i+1}d_{i}\,,\quad d_{i+1} d_{i} s_{i+1} = d_{i+1} s_{i}\,,\label{eq:rel_4}
\end{align}
while any other relation in $B_{n}(\delta)$ is a result of a composition of the relations \eqref{eq:rel_1}-\eqref{eq:rel_4}. The elements $s_i$ satisfy the known relations for simple transpositions in the symmetric group $\Sn{n}$, so linear combinations of their products constitutes the symmetric group algebra $\mathbb{C}\Sn{n}$.
\vskip 4 pt

We also introduce the following elements, for all possible $i<j$:

\begin{equation}
s_{ij} \; =\; \raisebox{-.4\height}{\includegraphics[width=140pt,height=50pt]{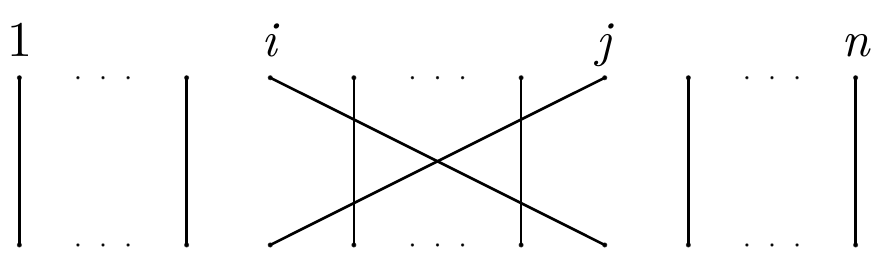}}\;,\quad\quad d_{ij} \;=\; \raisebox{-.4\height}{\includegraphics[width=140pt,height=50pt]{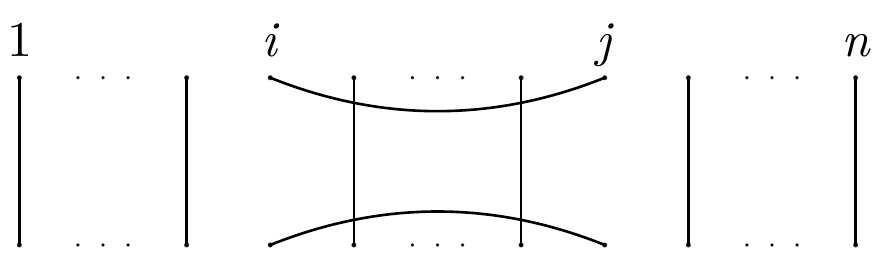}}
\end{equation}

\vskip 4 pt

\paragraph{Action on $V^{\otimes n}$.} Application of $B_{n}(\delta)$ in the context of Schur-Weyl-type dualities for the group $G(N)$ takes place for integer values of the parameter $\delta = \varepsilon N$, where $\varepsilon = +1$, $N\in \mathbb{N}$ when $G(N) = O(N)$ and $\varepsilon = -1$, $N\in 2\mathbb{N}$ when $G(N) = Sp(N)$. To describe the left\footnote{In the present work we consider the left action of $B_{n}(\varepsilon N)$. In the context of bi-modules, it is also common to consider the right action defined as $\mathfrak{r}_{\mathrm{op}}(b) = \mathfrak{r}(b^{*})$, where $b^{*}$ is the diagram obtained by reflecting $b$ with respect to the horizontal middle line. In this case, $\mathfrak{r}_{\mathrm{op}}(b_1)\mathfrak{r}_{\mathrm{op}}(b_2) = \mathfrak{r}_{\mathrm{op}}(b_2 b_1)$.} action of any diagram $b\in B_{n}(\varepsilon N)$ on $V^{\otimes n}$, denote $\mathtt{i}(b)$ the minimal number of intersections among its lines. Then the transformation $\mathfrak{r}(b)$ is applied  to $v_1\otimes \dots  \otimes v_n$ as follows (see Theorem 2.10 in \cite{HW_decomposition_Br}): place the tensor factors in the upper nodes of $b$ and perform {\it i)} permutations by following the passing lines from top to bottom, {\it ii)} contractions of pairs joined by arcs in the upper row, {\it iii)} insertion of $E = e_{a}\otimes e_{b}\,g^{ab}$ at each pair of the lower nodes joined by an arc; the result is multiplied by $\varepsilon^{\mathtt{i}(b)}$. For example, for the diagrams $b_1, b_2$ introduced above one has: 
\begin{equation}
\def\arraystretch{1.4}
\begin{array}{l}
    \mathfrak{r}(b_1)\, v_1\otimes v_2 \otimes v_3 \otimes v_4 \otimes v_5 = \varepsilon\,e_a\otimes v_1\otimes e_b\otimes e_c \otimes e_d\; g^{ac} g^{bd} \langle v_2,v_4\rangle \langle v_3,v_5\rangle\,,\\
    \mathfrak{r}(b_2)\, v_1\otimes v_2 \otimes v_3 \otimes v_4 \otimes v_5 = \varepsilon\,v_2\otimes e_a\otimes e_b \otimes e_c\otimes e_d\; g^{ab} g^{cd} \langle v_1,v_5\rangle \langle v_3,v_4\rangle\,.
\end{array}
\end{equation}
The action of the generators $s_i$, $d_i$ is particularly simple:
\begin{equation}\label{eq:action_generators}
\begin{array}{l}
    \mathfrak{r}(s_i)\,v_1\otimes \dots\otimes v_i\otimes v_{i+1}\otimes\dots \otimes v_n = \varepsilon\, v_1\otimes \dots\otimes v_{i+1}\otimes v_i\otimes\dots \otimes v_n\,,\\
    \mathfrak{r}(d_i)\,v_1\otimes \dots\otimes v_i\otimes v_{i+1}\otimes\dots \otimes v_n = \langle v_i,v_{i+1}\rangle\, v_1\otimes \dots\otimes E \otimes\dots \otimes v_n\,.
\end{array}
\end{equation}
One can check that the above transformations \eqref{eq:action_generators} verify the relations \eqref{eq:rel_1}-\eqref{eq:rel_4}, and hence $\mathfrak{r}$ is indeed the homomorphism of algebras. As was pointed out, the homomorphism is injective when $N\geqslant n$ for $\varepsilon = 1$ and $N\geqslant 2n$ for $\varepsilon = -1$ \cite{Brown_semisimplicity}. When it is not the case, the non-trivial kernel is generated by particular anti-symmetrisations of tensor factors \cite[Theorem 3.7]{Gavarini_LitRich} (in this relation, see also \cite[Theorems 2.17.A and 6.1.B]{Weyl}).
\vskip 4 pt

The diagrammatic representation of $B_{n}(\varepsilon N)$, when translated to the action on tensors by the homomorphism $\mathfrak{r}$, elucidates the operations on the components of the tensor products (or, simply, indices). Due to this convenience, in the sequel we will often consider $V^{\otimes n}$ directly as a module over $B_{n}(\varepsilon N)$ and treat the two notations for the action of $x\in B_{n}(\varepsilon N)$ on $T\in V^{\otimes n}$ -- $\mathfrak{r}(x)T$ and $x(T)$ -- on equal footing.
\vskip 4 pt

The operation $(\,\cdot\,)^{*}$ defined in \eqref{eq:adjoint} is an involutive anti-automorphism (or anti-involution) of the algebra $\mathrm{End}(V^{\otimes n})$. There is a natural involutive anti-automorphism of the Brauer algebra given by flipping each diagram with respect to the horizontal middle line. As a matter of a simple check, the $\mathfrak{r}$-image of a flipped diagram coincides with the adjoint to the $\mathfrak{r}$-image of the initial diagram. Thus, with a slight abuse of notation,
\begin{equation}\label{eq:flip}
    \text{for any diagram $b\in B_{n}(\varepsilon N)$ denote its flip $b^{*}$. Then one has $\mathfrak{r}(b^{*}) = \mathfrak{r}(b)^{*}$.}
\end{equation}
Note that for any $s\in \Sn{n}$, $s^{*} = s^{-1}$. The action of $(\,\cdot\,)^{*}$ on linear combinations of diagrams is defined by linearity.

\subsection{The centraliser of $\mathbb{C}\Sn{n}$}\label{sec:conjugacy_classes}

Two diagrams $b,b^{\prime}\in B_{n}(\delta)$, are said to be {\it conjugate}, which is denoted $b\sim b^{\prime}$, if there exists an element $s\in\Sn{n}$ (a monomial in generators $s_i$) such that $b^{\prime} = s b s^{-1}$. Two Brauer diagrams are conjugate iff they are related by a permutation of pairs of vertically aligned nodes.
\vskip 4 pt

Define the {\it average} of $b\in B_{n}(\delta)$ as follows:
\begin{equation}\label{eq:average}
    \e_b = \sum_{s\in\Sn{n}} s b s^{-1} \,.
\end{equation}
Let $C_{\Sn{n}}(b)$ denote the centralizer of a diagram $b\in B_{n}(\delta)$ in $\Sn{n}$, {\it i.e.} the set of elements $r\in\Sn{n}$ such that $rbr^{-1} = b$. The cardinalities of centralisers of two conjugate diagrams coincide, so one has
\begin{equation}\label{eq:normalised_classes}
    \e_b  = \big|C_{\Sn{n}}(b)\big|\,\sum_{b^{\prime}\sim b} b^{\prime} \,.
\end{equation}
The subalgebra $C_n(\delta)$ spanned by all averages forms the centraliser of $\mathbb{C}\Sn{n}$ in $B_{n}(\delta)$:
\begin{equation}\label{eq:centraliser}
  \text{for any } s\in\Sn{n}\;\; \text{holds}\;\; us = su\quad\Leftrightarrow\quad  u = \gamma_{b}\;\; \text{for some $b\in B_{n}(\delta)$}\,.  
\end{equation}
The proof of the above implication from left to right is straightforward, while for the opposite implication note that for the decomposition over the diagram basis in $B_{n}(\delta)$,
\begin{equation}
    u = \sum_{b\in B_{n}(\delta)} c_{b}\,b\,,
\end{equation}
the condition $us = su$ for all $s\in \Sn{n}$ is equivalent to $c_{s b s^{-1}} = c_{b}$ for each diagram $b$ in the above decomposition.
\vskip 4 pt

\paragraph{The principal building block of $\mathfrak{P}_n$.} The significance of the algebra $C_{n}(\delta)$ for the purpose of constructing the universal traceless projector is due to $(\mathrm{P}3)$ in \eqref{eq:projector_properties}, so it is natural to expect that 
\begin{equation}
    \mathfrak{P}_n = \mathfrak{r}(P_n)\quad\text{for some}\quad P_n\in C_{n}(\delta)\,.
\end{equation}
In this respect, one can look for $P_n$ as a linear combination of averages \eqref{eq:average} and fix the coefficients from the requirement $\mathfrak{r}(J^{(1)} P_{n}) = 0$ (this procedure was analysed in \cite{KMP_central_idempotents} for the construction of particular idempotents in $B_{n}(\delta)$, see Section \ref{sec:splitting_idempotent}). We choose a different direction and invoke representation theory of the Brauer algebra, focusing on the following normalised average: 
\begin{equation}\label{eq:master_class}
    A_n = \frac{1}{2(n-2)!}\,\e_{d_1} = \sum_{1\leqslant i<j\leqslant n} d_{ij} \in C_n(N)\cap J^{(1)}\,,\quad (A_n)^{*} = A_n\,.
\end{equation}

\begin{lemma}\label{lem:KerA}
    The action of $A_n$ on $V^{\otimes n}$ is diagonalisable. The subspace $\mathrm{Ker}\,A_n \subset V^{\otimes n}$ is exactly the space of traceless tensors, while non-zero eigenvalues of $A_n$ are in $\varepsilon \mathbb{N}$.
\end{lemma}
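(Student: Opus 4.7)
I split the lemma into three claims --- (i) diagonalisability, (ii) $\mathrm{Ker}\,A_n=\{T\text{ traceless}\}$, and (iii) non-zero eigenvalues lie in $\varepsilon\mathbb{N}$ --- and treat all three through the Schur-Weyl decomposition \eqref{eq:Schur-Weyl}.

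\emph{(i) Diagonalisability.} Since $A_n$ commutes with $G(N)$, Schur's lemma applied to each summand of $V^{\otimes n}\cong\bigoplus_\lambda D^{(\lambda)}\otimes M_n^{(\hat{\lambda})}$ forces $A_n=\mathrm{id}_{D^{(\lambda)}}\otimes a_\lambda$ for some $a_\lambda\in\mathrm{End}(M_n^{(\hat{\lambda})})$. By \eqref{eq:centraliser}, $A_n$ also commutes with $\mathbb{C}\Sn{n}\subset B_n(\varepsilon N)$, so $a_\lambda$ is $\mathbb{C}\Sn{n}$-equivariant. Decomposing $M_n^{(\hat{\lambda})}$ into simple $\mathbb{C}\Sn{n}$-submodules (semisimplicity of $\mathbb{C}\Sn{n}$) and invoking Schur's lemma once more yields that $a_\lambda$ acts by a scalar on each such summand; hence $a_\lambda$ and therefore $A_n$ are diagonalisable.

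\emph{(ii) Kernel equals the traceless subspace.} The inclusion ``traceless $\subseteq\mathrm{Ker}\,A_n$'' is immediate: the action of $d_{ij}$ generalising \eqref{eq:action_generators} is the insertion of $E$ at positions $i,j$ of $\mathrm{tr}^{(g)}_{ij}(T)$, so a traceless $T$ gives $d_{ij}T=0$ for all $i<j$, whence $A_nT=0$. For the converse, I would identify the traceless subspace, as a $G(N)$-submodule, with the $f=0$ block $\bigoplus_{\lambda\vdash n}D^{(\lambda)}\otimes M_n^{(\hat{\lambda})}$ of \eqref{eq:tensor_GL-O_decomposition}, and the complementary $G(N)$-invariant ``traceful'' subspace $\mathfrak{r}(J^{(1)})V^{\otimes n}$ with $\bigoplus_{f\geq 1,\,\lambda\vdash n-2f}D^{(\lambda)}\otimes M_n^{(\hat{\lambda})}$. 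Since $a_\lambda=0$ on the traceless part, the reverse inclusion reduces to showing each $a_\lambda$ with $f\geq 1$ has no zero eigenvalue.

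\emph{(iii) $\varepsilon\mathbb{N}$-spectrum, which is also the main obstacle.} For this I would compute the scalar $\alpha_{\lambda,\mu}$ by which $a_\lambda$ acts on a simple $\mathbb{C}\Sn{n}$-summand of type $L^{(\hat{\mu})}$ (with $\mu\vdash n$ arising in the branching of $M_n^{(\hat{\lambda})}$) by relating $A_n$ to the quadratic Casimir $C$ of $\mathfrak{g}(N)$ acting diagonally on $V^{\otimes n}$. Expanding $C^{(n)}=\sum_iC_i+2\sum_{i<j}\Omega_{ij}$ and using that the mixed Casimir $\Omega_{ij}$ in the defining representation is an $\varepsilon$-dependent linear combination of $s_{ij}$, $d_{ij}$, and the identity, one obtains a scalar equation on $D^{(\lambda)}\otimes L^{(\hat{\mu})}$ that solves for $\alpha_{\lambda,\mu}$ as a linear combination of the Casimir eigenvalue on $D^{(\lambda)}$ and the content sum $\mathrm{con}(\mu)$ (the central scalar by which $\sum_{i<j}s_{ij}\in\mathbb{C}\Sn{n}$ acts on $L^{(\hat{\mu})}$). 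The technical core is then verifying, by case analysis on $\varepsilon=\pm 1$ and careful bookkeeping of Casimir/content identities, that $\alpha_{\lambda,\mu}\in\varepsilon\mathbb{N}\setminus\{0\}$ whenever $\lambda\vdash n-2f$ with $f\geq 1$. A cleaner alternative, if the logical order of the paper admits it, is to quote the closed-form spectrum of $A_n$ already established in Proposition \ref{prop:spectre_A}.
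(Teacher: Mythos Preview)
Your approach is genuinely different from the paper's, and it runs into a real difficulty at step (iii). The paper does \emph{not} deduce Lemma~\ref{lem:KerA} from the representation theory of $B_n(\varepsilon N)$; the logical flow is the reverse. For $G(N)=O(N)$ the proof fixes a real form of $V$ on which $\langle\cdot,\cdot\rangle$ is positive-definite, observes that each $\mathfrak{r}(d_{ij})$ is self-adjoint with $\langle T,\,d_{ij}T\rangle=\langle\mathrm{tr}^{(g)}_{ij}T,\,\mathrm{tr}^{(g)}_{ij}T\rangle\geq 0$, and concludes that $A_n$ is self-adjoint, hence diagonalisable, with non-negative spectrum. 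The kernel characterisation is then one line: $A_nT=0$ gives $0=\langle T,A_nT\rangle=\sum_{i<j}\langle\mathrm{tr}^{(g)}_{ij}T,\,\mathrm{tr}^{(g)}_{ij}T\rangle$, so every trace vanishes. The symplectic case is handled by a block-matrix variant of the same positivity argument (Appendix~\ref{app:KerA_proof_Sp}).

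Your route has two concrete problems. First, the Schur-lemma step in (i) is incomplete: when a simple $\mathbb{C}\Sn{n}$-module occurs in $M_n^{(\hat\lambda)}$ with multiplicity $m>1$ (which does happen for $n\geq 6$, cf.\ Lemma~\ref{lem:commutativity}), a $\mathbb{C}\Sn{n}$-equivariant endomorphism acts as an arbitrary element of $\mathrm{Mat}_m(\mathbb{C})$ on the multiplicity space, and diagonalisability does not follow. This is fixable by your Casimir identity (equivalently the Jucys--Murphy relation~\eqref{eq:master_class_JM}), which forces the scalar to depend only on $(\lambda,\mu)$; but that argument belongs in (i), not (iii). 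Second, and more seriously, the ``case analysis'' you propose for (iii) --- checking $\alpha_{\lambda,\mu}\in\varepsilon\mathbb{N}\setminus\{0\}$ for every pair with $f\geq 1$ that actually occurs in $V^{\otimes n}$ --- is exactly the content of Corollary~\ref{cor:KerA_Br}, which the paper derives \emph{from} Lemma~\ref{lem:KerA} and explicitly flags (in the remark following it) as hard to prove without the tensorial positivity argument outside the semisimple regime. Your proposed shortcut via Proposition~\ref{prop:spectre_A} is circular: its proof already quotes Lemma~\ref{lem:KerA}.
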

\begin{proof}
    We concentrate on the case $G(N) = O(N)$ (with $\varepsilon = 1$), which is instructive and transparent at the same time. The case $G(N) = Sp(N)$ (with $\varepsilon = -1$) utilises the same idea, but is more involved technically, so we postpone it to Appendix \ref{app:KerA_proof_Sp}. Fix a real structure in $V$ such that $\langle\,\cdot,\cdot\,\rangle$ (as defined in \eqref{eq:metric_tensor_product}) has Euclidean signature, and denote $V_{\mathbb{R}}$ the real subspace.
    \vskip 4 pt
    
    The action of $d_{ij}$ on decomposable tensors is given by: 
    \begin{equation}\label{eq:Im_d}
        \mathfrak{r}(d_{ij})\, v_{1}\otimes \dots\otimes v_{n}=\langle v_{i},v_{j}\rangle\, g^{a_i a_j} v_{1}\otimes\dots\otimes e_{a_i}\otimes\dots\otimes e_{a_j}\otimes \dots\otimes v_{n}\,,
    \end{equation}
    so for two tensors $T_1,T_2\in V_{\mathbb{R}}^{\otimes n}$ one has
    \begin{equation}\label{eq:scalar_product_tensors_trace}
        \big< T_1,d_{ij}(T_2)\big> =\big< \mathrm{tr}^{(g)}_{ij} T_1,\mathrm{tr}^{(g)}_{ij} T_2\big> = \big< d_{ij}(T_1),T_2\big> \quad \text{(for any $i<j$)}\,,
    \end{equation}
    where the trace operation was defined in \eqref{eq:traces_tensors}. Thus, $\mathfrak{r}(d_{ij})$ (for any $i<j$) is self-adjoint, so it is diagonalisable with respect to some orthonormal basis. Since the scalar product is positive-definite, \eqref{eq:scalar_product_tensors_trace} implies that the eigenvalues of $\mathfrak{r}(d_{ij})$ are non-negative. As a result, the action of $A_n = \sum_{1\leqslant i < j \leqslant n} d_{ij}$ on $V^{\otimes n}$ is self-adjoint, and therefore diagonalisable with respect to some orthonormal basis. With this at hand, it is a simple exercise to prove that its eigenvalues are non-negative as well.
    \vskip 4 pt
    
    Let us show that $T\in V_{\mathbb{R}}^{\otimes n}$ is traceless iff $A_n(T) = 0$. One way is simple: any traceless tensor is in $\mathrm{Ker}\,A_n$. Other way around, assume $A_n(T) = 0$ and use \eqref{eq:scalar_product_tensors_trace} to write
    \begin{equation}
        0 = \big<T,A_n(T)\big> = \sum_{1\leqslant i < j \leqslant n} \big< \mathrm{tr}^{(g)}_{ij} T,\mathrm{tr}^{(g)}_{ij} T\big>\,.
    \end{equation}
    Each term on the right-hand-side is non-negative, so the whole sum vanishes only if each term vanishes individually, which is the case only when $\mathrm{tr}_{ij}^{(g)}T = 0$. Taking into account that $V \cong \mathbb{C} \otimes V_{\mathbb{R}}$, the above arguments clearly extend to $V^{\otimes n}$.
\end{proof}
\noindent Our next goal consists in describing the properties of $A_n$ from the point of view of the representation theory of $B_{n}(\varepsilon N)$. In particular, the latter will provide the eigenvalues of $A_n$ on $V^{\otimes n}$. 
\vskip 4 pt

\subsection{Simple $\mathbb{C}\Sn{n}$- and $B_{n}(\varepsilon N)$-modules in $V^{\otimes n}$}\label{sec:modules}

Combinatorial and algebraic properties of simple $\mathbb{C}\Sn{n}$- and $B_{n}(\varepsilon N)$-modules are expressed via partitions of integers, which leads inevitably to Young diagrams and makes it natural to start by recalling some related basics.
\vskip 4 pt

\paragraph{Partitions and Young diagrams.} A $r$-partition $\lambda$ of an integer $n\in\mathbb{N}$, denoted as $\lambda\vdash n$, 
is a weakly decreasing sequence of positive integers $\lambda = (\lambda_1,\dots,\lambda_r)$ such that $|\lambda| = \sum_{i=1}^{r} \lambda_i = n$. If each $\lambda_i$ is even, the partition $\lambda$ is said to be {\it even}. Partitions are conveniently identified with the {\it Young diagrams}: arrays of squares placed at matrix entries $(i,j)$, such that for a given $r$-partition $\lambda$, $1\leqslant i\leqslant r$ and $1\leqslant j\leqslant \lambda_i$. To any partition $\lambda = (\lambda_1,\dots,\lambda_r)$ of $n$ one constructs the {\it dual partition} $\lambda^{\prime} = (\lambda_1^{\prime},\dots,\lambda^{\prime}_{\lambda_1})$ by transposing the corresponding Young diagram. For convenience, partitions and the corresponding Young diagrams are identified.
For example, partitions of $n=4$ are: 
\begin{equation}\label{eq:partitions}
\left( 4 \right)=\Yboxdim{9pt}\yng(4) \qquad \left( 3,1 \right)=\Yboxdim{9pt}\yng(3,1) \qquad  \left( 2^2 \right)=\Yboxdim{9pt}\yng(2,2) \qquad \left( 2,1^2 \right)=\Yboxdim{9pt}\yng(2,1,1) \qquad \left( 1^4 \right)=\Yboxdim{9pt}\yng(1,1,1,1)
\end{equation}
with two even partitions $(4)$ and $(2,2)$ among them. For dual partitions one has $(4)^{\prime} = (1^4)$, $(3,1)^{\prime} = (2,1^2)$ and $(2^2)^{\prime} = (2^2)$. For completeness, one also considers the partition of zero (the empty partition) $\varnothing$, with the empty set of boxes for the corresponding Young diagram. By definition, the empty partition is even.
\vskip 5 pt

The set of all Young diagrams is weakly ordered by inclusion: $\lambda\subset \mu$ implies that any box of $\lambda$ is also present in $\mu$. For a pair $\lambda\subset\mu $ define the {\it skew-shape Young diagram} $\mu\backslash \lambda$ as a set-theoretical difference of the corresponding Young diagrams. We set by definition $|\mu\backslash \lambda| = |\mu| - |\lambda|$. For example, 
\begin{equation}\label{eq:skew-shape_example}
\text{given}\quad\mu=\Yboxdim{9pt}\yng(4,2,2,1) \quad\text{and}\quad \lambda=\Yboxdim{9pt}\yng(2,1)\,,\quad\text{one has}\quad \mu\backslash \lambda =\Yboxdim{9pt}\young(\times\times ~~,\times ~,~~,~)
\end{equation}
\vskip 4 pt
\noindent
To each box of a (skew-shape) Young diagram at a position $(i,j)$ we associate its {\it content} $c(i,j) = j - i$. In this respect we define the content of any Young diagram as a sum:
\begin{equation}
    c(\lambda) = \sum_{(i,j)\in \lambda} c(i,j)\,.
\end{equation}
One defines the content of a skew-shape Young diagram to be $c(\mu\backslash\lambda) = c(\mu)-c(\lambda)$. For example, for the contents of the skew shape in \eqref{eq:skew-shape_example} one has
\begin{equation*}
    \mu\backslash \lambda =\Yboxdim{9pt}\young(\times\times \two\three,\times \zero,\mitwo \mione,\mithree) \quad \Rightarrow \quad c(\mu\backslash \lambda) = -1\,.
\end{equation*}
\vskip 4 pt

\paragraph{Littlewood-Richardson rule.} The $\mathbb{N}$-span of Young diagrams is endowed with the structure of an associative commutative monoid with the unit element given by $\varnothing$. For the product of two diagrams $\lambda,\nu$ we shall write 
\begin{equation}\label{eq:LRRule}
    \lambda \LRp \nu = \sum_{\mu} c^{\mu}_{\lambda\nu}\,\mu\,.
\end{equation}
The structure constants $c^{\mu}_{\lambda\nu}$ are referred to as Littlewood-Richardson coefficients. They are calculated via the {\it Littlewood-Richardson rule}, which admits a number of equivalent ways to formulate it in terms of {\it semi-standard tableaux} \cite{Fulton}. Let us recall that a tableau of shape $\mu\backslash\lambda$ is any map which associates a positive integer to each box of $\mu\backslash\lambda$. A tableau is called semi-standard if the numbers in each row (respectively, column) form a weakly (respectively, strongly) increasing sequence. We will say that a partition $\nu = (\nu_1\geqslant \dots\geqslant \nu_r)$ is a {\it weight} of a semi-standard tableau $\mathsf{t}(\mu\backslash\lambda)$ if the latter contains exactly $\nu_i$ occurrences of the entry $i$. To any tableau $\mathsf{t}$ one associates a {\it row word} $w(\mathsf{t})$ by reading the entries of boxes along each line from left to right proceeding from the bottom line to the top one. A word is called {\it Yamanouchi word} (equivalently, Littlewood-Richardson word or reverse lattice word) if any its suffix contains at least as many $1$'s as $2$'s, at least as many $2$'s as $3$'s, {\it etc.} For example,
\begin{equation}\label{eq:example_Yamanouchi}
\def\arraystretch{1.4}
\begin{array}{l}
    \text{for}\;\;\mu\backslash \lambda =\Yboxdim{8pt}\young(\times\times~~,\times~,~)\;\;\text{and the weight}\;\;\nu = \Yboxdim{8pt}\young(~~~,~)\\
    \text{one has exactly two semistandard tableaux}\;\; \mathsf{t}_1 = \Yboxdim{8pt}\young(\times\times\one\one,\times\one,\two)\,,\;\;\mathsf{t}_2 = \Yboxdim{8pt}\young(\times\times\one\one,\times\two,\one)\,,\\
    \text{such that the row words are Yamanouchi words}\;\;w(\mathsf{t}_1) = 2111\;\;\text{and}\;\;w(\mathsf{t}_1) = 1211\,.
\end{array}
\end{equation}
\vskip 4 pt
\noindent With this at hand, one arrives at the following definition of Littlewood-Richardson coefficients:
\begin{equation}\label{eq:definition_LR_1}
\def\arraystretch{1.4}
\begin{array}{ll}
    c^{\mu}_{\lambda\nu} & \text{is the number of semi-standard tableaux of the shape}\;\;\mu\backslash\lambda\;\;\text{and weight}\;\;\nu\\
    \hfill & \text{whose row word is a Yamanouchi word.}
\end{array}
\end{equation}
If either $\lambda\not \subset \mu$ or $|\nu| \neq |\mu| - |\lambda|$, one puts $c^{\mu}_{\lambda\nu} = 0$. It appears that $c^{\mu}_{\lambda\nu} = c^{\mu}_{\nu\lambda}$, so $c^{\mu}_{\lambda\nu} \neq 0$ also implies $\nu \subset \mu$. From the above example one obtains $c^{(4,2,1)}_{(2,1),(3,1)} = 2$.
\vskip 4 pt

We will also need an equivalent definition of Littlewood-Richardson coefficients based on the {\it jeu de taquin}. Recall that a corner of a Young diagram is any box with no other boxes on the right and below. An inside corner of a skew-shape $\mu\backslash\lambda$ (with $\lambda \subset \mu$) is a corner of $\lambda$ which is not a corner of $\mu$. Any chosen inner corner of a semi-standard tableau $\mathsf{t}$ (thought as an empty box) can be removed by the following {\it sliding process}: at any step consider the neighbour(s) on the right and below the empty box, then slide the smallest one into the empty box, while if the two are equal, slide the one below. The process continues until the empty box becomes a corner of $\mu$, and is removed afterwards. The resulting tableau is again semi-standard, so one can repeatedly perform the sliding process until the shape of the semi-standard tableau becomes a Young diagram. The whole process is called jeu de taquin, and the resulting semi-standard tableau $\mathsf{t}^{\prime}$ is the same for any order of processing the inner corners. It is called the {\it rectification of} $\mathsf{t}$, $\mathsf{t}^{\prime} = \mathrm{Rect}(\mathsf{t})$. For example, for the tableau $\mathsf{t}_1$ in \eqref{eq:example_Yamanouchi} one has
\begin{equation}\label{eq:example_Rect}
\def\arraystretch{2.4}
\begin{array}{ll}
    \mathsf{t}^{\prime}_1 = \mathrm{Rect}\left(\Yboxdim{8pt}\young(\times\times\one\one,\times \one,\two)\right) =  \Yboxdim{8pt}\young(\one\one\one,\two)\,,\;\;\text{namely,} \;\;& \Yboxdim{8pt}\young(\times\times\one\one,\times \one,\two) \;\;\rightarrow \;\;\Yboxdim{8pt}\young(\times \one \one \one,\times \times,\two) \;\; \rightarrow \;\;\Yboxdim{8pt}\young(\times\one \one \one,\times,\two)\,,\\
    \hfill & \Yboxdim{8pt}\young(\times \one \one\one,\times ,\two) \;\;\rightarrow\;\; \young(\times \one \one\one,\two ,\times) \;\; \rightarrow \;\; \young(\times \one \one\one,\two)\;\;,\\
    \hfill & \Yboxdim{8pt}\young(\times \one \one\one,\two)\;\;\rightarrow\;\;\young(\one \times\one \one,\two)\;\;\rightarrow\;\;\young(\one \one \times\one,\two)\;\; \rightarrow\;\;\young(\one \one \one\times,\two)\;\;\rightarrow\;\;\young(\one \one \one,\two)\,.
\end{array}
\end{equation}
One can check that for the other tableau in \eqref{eq:example_Yamanouchi} one has the same $\mathrm{Rect}(\mathsf{t}_2) = \mathsf{t}^{\prime}_1$.
\vskip 4 pt

For any Young diagram $\nu$ denote $\mathsf{E}(\nu)$ to be the semi-standard tableau of weight $\nu$, {\it i.e.} such that each $i$th row is filled with $i$. Then we arrive at the following equivalent definition of Littlewood-Richardson coefficients:
\begin{equation}\label{eq:definition_LR_2}
\def\arraystretch{1.4}
\begin{array}{ll}
    c^{\mu}_{\lambda\nu} & \text{is the number of semi-standard tableaux of the shape}\;\;\mu\backslash\lambda\;\;\text{and weight}\;\;\nu\\
    \hfill & \text{whose rectification is $\mathsf{E}(\nu)$.}
\end{array}
\end{equation}
By comparing the two examples \eqref{eq:example_Yamanouchi} and \eqref{eq:example_Rect}, one can verify that both definitions \eqref{eq:definition_LR_1} and \eqref{eq:definition_LR_2} lead to the same result $c^{(4,2,1)}_{(2,1)\, (3,1)} = 2$.
\vskip 4 pt

Let us consider a configuration obtained after a number of sliding processes during the jeu de taquin applied to a semi-standard tableau of a shape $\mu\backslash\lambda$, and let us keep the empty boxes at the end of each sliding process. Then one has a chain of three diagrams $\tau \subset \sigma \subset \mu$, where $\mu\backslash\sigma$ is the set of empty boxed resulting from the sliding processes, $\sigma\backslash\tau$ is a semi-standard tableau, and $\tau$ is the set of empty boxes not involved in the performed sliding processes. Let us say that a box of $\mu\backslash\sigma$ is an {\it addable corner} if adding it to $\sigma$ leads to a Young diagram. Note that each addable corner results from a sliding process applied to an inner corner, and that each particular sliding process is invertible. Let us define the {\it reverse sliding process} for any addable corner: at any step consider the neighbour(s) on the left and above the empty box, then slide the greater one into the empty box, while if the two are equal, slide the one above. The process continues until the empty box becomes an inner corner. In this respect, any chain $\tau \subset \sigma \subset \mu$, with a  semi-standard skew-shape diagram $\sigma\backslash\tau$, can be considered as an intermediate configuration of the jeu de taquin, with both types of slidings possible. Upon exhausting direct sliding processes, the unique terminal configuration (the rectification) with $\tau = \varnothing$ was considered above. On the other hand, starting from the terminal configuration of the shape $\mu\backslash\nu$, with the semi-standard tableau $\mathsf{E}(\nu)$, and going backwards by different sequences of reverse slidings until $\sigma = \mu$ leads to different semi-standard tableaux $\mathsf{t}^{(\mathrm{init})}$ of different shapes $\mu\backslash\lambda^{(\mathrm{init})}$. Define $\mu\slashdiv\nu$ to be the set of so obtained diagrams $\lambda^{(\mathrm{init})}$. By construction, the terminal configuration $\mathsf{E}(\nu) = \mathrm{Rect}\big(\mathsf{t}^{(\mathrm{init})}\big)$ is the same for all $\mathsf{t}^{(\mathrm{init})}$, so according to the definition \eqref{eq:definition_LR_2}, the set $\mu\slashdiv\nu$ contains such diagrams $\lambda$ that $c^{\mu}_{\lambda\nu} \neq 0$, and only them. For example, keeping empty boxes upon constructing $\mathrm{Rect}(\mathsf{t}_1)$ in \eqref{eq:example_Rect} and applying different sequences of reverse sliding processes gives three skew-shape diagrams:
\begin{equation}
    \Yboxdim{8pt}\young(\one \one \one \times,\two\times,\times)\;\;\xrightarrow{\text{rev. slides}}\;\; \Yboxdim{8pt}\young(\times\times\times\one ,\one \one,\two)\,,\;\; \Yboxdim{8pt}\young(\times\times\one\one ,\times \one,\two)\,,\;\; \Yboxdim{8pt}\young(\times\times\one\one ,\times \two,\one)\,,\;\; \Yboxdim{8pt}\young(\times\one\one\one ,\times \two,\times)\,,\quad \text{thus}\quad \Yboxdim{8pt}\young(~~~~,~~,~)\slashdiv\young(~~~,~) = \left\{\young(~,~,~)\,,\; \young(~~,~)\,,\;\young(~~~)\right\}\,.
\end{equation}

\paragraph{Simple $B_{n}(\varepsilon N)$- and $\mathbb{C}\Sn{n}$-modules in $V^{\otimes n}$.} The simple (left) $B_{n}(\delta)$-modules $M^{(\lambda)}_n$ are indexed by particular Young diagrams $\lambda$ with $|\lambda| = n - 2f$, with $f = 0,\dots, \lfloor \frac{n}{2} \rfloor$ \cite[Corollary 3.5]{Wenzl_structure-Br}. We are interested in singling out only those of them which occur in $V^{\otimes n}$ in the context of Schur-Weyl-type duality \eqref{eq:Schur-Weyl}, when $\delta = \varepsilon N$. To do so, for a given $l\in \mathbb{N}$ consider the set
\begin{equation}\label{eq:def_Lambda}
\def\arraystretch{1.4}
    \begin{array}{rcl}
        \Lambda_{l,N} = & \big\{ \, \lambda \vdash l \;\; : \; & \lambda_1^{\prime} + \lambda_2^{\prime} \leqslant N\;\; \text{(for the dual partition $\lambda^{\prime}$) when $G(N) = O(N)$, and} \\
        \hfill & \hfill &  \lambda_1 \leqslant N\slash 2\;\; \text{when $G(N) = Sp(N)$} \big\} \,.
    \end{array}
\end{equation}
Then a $B_{n}(\varepsilon N)$-module $M^{(\lambda)}_n$ appears in $V^{\otimes n}$ iff $\lambda \in \Lambda_{n-2f,N}$ for some $f = 0,\dots,\lfloor\tfrac{n}{2}\rfloor$ \cite{Wenzl_structure-Br,Nazarov,DipperDotyHu_SpBrauer}.
\vskip 4 pt

The restrictions on Young diagrams in $\Lambda_{l,N}$  can be interpreted as the absence of certain $B_{n}(\varepsilon N)$-modules in $V^{\otimes n}$, which occurs due to non-injectivity of $\mathfrak{r}$. It is known that $\mathfrak{r}$ is injective for $N\geqslant n$ when $G(N) = O(N)$ and for $N\geqslant 2n$ when $G(N) = Sp(N)$ \cite{Brown_semisimplicity}, and this is exactly where the constraints trivialise. In the sequel, unless otherwise is specified, simple $B_{n}(\varepsilon N)$-modules will be viewed as submodules in $V^{\otimes n}$ (with $n\geqslant 2$ and $N = \dim V$).
\vskip 4 pt

We will also need some basic facts about the representation theory of $\mathbb{C}\Sn{n}\subset B_{n}(\varepsilon N)$. Namely, that simple $\mathbb{C}\Sn{n}$-modules $L^{(\mu)}$ are indexed by Young diagrams with $|\mu| = n$. Those of them which occur in $V^{\otimes n}$ carry a label from the following set:
\begin{equation}\label{eq:def_Sigma}
    \def\arraystretch{1.4}
    \begin{array}{rcl}
        \Sigma_{n,N} = & \big\{ \, \mu \vdash n \;\; : \; & \mu_1^{\prime} \leqslant N\;\;\text{for $\varepsilon = 1$, and} \\
        \hfill & \hfill & \mu_1 \leqslant N\;\; \text{for $\varepsilon = -1$} \big\}\,,
    \end{array}
\end{equation}
where $\varepsilon$ distinguishes between the two ways for permutations to act on $V^{\otimes n}$ according to \eqref{eq:action_generators}.
\vskip 4 pt

The sets $\hat{\Lambda}_{n-2f,N}$ and $\hat{\Sigma}_{n,N}$ in \eqref{eq:tensor_GL-O_decomposition} and \eqref{eq:Schur-Weyl} are obtained from $\Lambda_{n-2f,N}$ and $\Sigma_{n,N}$ via element-wise application of the following $\varepsilon$-dependent mapping:
\begin{equation}\label{eq:hat}
\def\arraystretch{1.7}
\begin{array}{rl}
    \text{for any partition $\lambda$,} & \text{$\hat{\lambda} = \lambda$ when $\varepsilon = 1$ (in particular, when $G(N) = O(N)$), and} \\
    \hfill & \text{$\hat{\lambda} = \lambda^{\prime}$ when $\varepsilon = -1$ (in particular, when $G(N) = Sp(N)$).}
\end{array}
\end{equation} 
\vskip 4 pt

\paragraph{Restriction to $\mathbb{C}\Sn{n}$.} Upon restriction to the subgroup $G(N)\subset GL(N)$, the simple $GL(N)$-modules decompose into a direct sum of simple $G(N)$-modules. Similarly, the simple $B_{n}(\varepsilon N)$-modules decompose into a direct sum of simple $\mathbb{C}\Sn{n}$-modules upon restriction to the subalgebra $\mathbb{C}\Sn{n} \subset B_{n}(\varepsilon N)$. The two restrictions are related via comparing the two decompositions \eqref{eq:Schur-Weyl} of the same space $V^{\otimes n}$ (see, {\it e.g.}, \cite[Lemma 4.2]{Gavarini_LitRich}):
\begin{equation}\label{eq:branching}
\begin{array}{l}
    \displaystyle\def\arraystretch{0.7}
    \text{if}\quad V^{(\mu)} \cong \bigoplus_{\begin{array}{c}
        {\scriptstyle \hat{\nu}\;\;\text{even}\,,}\\
        {\scriptstyle \nu^{\prime}_1 \leqslant N} 
    \end{array}}\big(D^{(\lambda)}\big)^{\oplus \overline{c}^{\mu}_{\nu\lambda}}\quad\text{(upon $GL(N)\downarrow G(N)$)}\\
    \text{then}\displaystyle\def\arraystretch{0.7}\quad M^{(\hat{\lambda})}_n \cong \bigoplus_{\begin{array}{c}
        {\scriptstyle \hat{\nu}\;\;\text{even}\,,}\\
        {\scriptstyle \nu^{\prime}_1 \leqslant N} 
    \end{array}} \big(L^{(\hat{\mu})}\big)^{\oplus \overline{c}^{\mu}_{\nu\lambda}}\quad\text{(upon $B_{n}(\varepsilon N)\downarrow \mathbb{C}\Sn{n}$)}\,,\quad\text{and {\it vice versa}}.
\end{array}
\end{equation}
When $\mu\in\hat{\Lambda}_{n,N}\subset \hat{\Sigma}_{n,N}$ one has $\overline{c}^{\mu}_{\nu\lambda} = c^{\mu}_{\nu\lambda}$ (the Littlewood-Richardson coefficients): this is the case when the Littlewood restriction rules apply \cite{Littlewood}. The branching rules on the left-hand-side of \eqref{eq:branching} are extensively presented in the literature \cite{Littlewood,Koike_Terada_banching,Enright_Willenbring_branching,Kwon_branching}. In particular, note the combinatorial approach proposed in \cite[Theorem 4.17 and Remark 4.19]{Jang_Kwon_branching}, where in the case $\mu\notin\hat{\Lambda}_{n,N}$ the multiplicities $\overline{c}^{\mu}_{\nu\lambda}$ are defined via additional $N$-dependent constraints on the tableaux in the definition \eqref{eq:definition_LR_1}.
\vskip 4 pt

In this respect, the subset of labels $\Lambda_{n,N}\subset \Sigma_{n,N}$ will be called {\it Littlewood-admissible}. By applying Schur-Weyl duality \eqref{eq:Schur-Weyl} in this case, one can detect the occurrence $L^{(\mu)}\subset M^{(\lambda)}_n$ upon restriction to $\mathbb{C}\Sn{n}$. In more detail, for any integer $f\geqslant 0$ define the following set of Young diagrams ({\it the closure}):
\begin{equation}\label{eq:LR-closure}
\def\arraystretch{1.4}
        \mathrm{cl}^{(f)}_{N}(\lambda) =  \left\{\mu \supset \lambda \;\; :\;\; \mu\in \Sigma_{n,N}\;\;\text{and}\;\; |\mu\backslash\lambda| = 2f\,,\;\; c^{\mu}_{\nu\lambda}\neq 0\;\; \text{for some even}\;\; \nu\;\; \text{with}\;\; |\nu| = 2f\right\}\,.
\end{equation}
When $B_{n}(\varepsilon N)$ is {\it semisimple} ({\it i.e.} when $N\geqslant n-1$ for $\varepsilon = 1$ and $N\geqslant 2(n-1)$ for $\varepsilon = -1$ \cite{Brown_semisimplicity}), the constraints on the size of diagrams in \eqref{eq:LR-closure} trivialise when $\lambda \in \Lambda_{n-2f}$ with $f \geqslant 1$. Occurrence of a particular simple $\mathbb{C}\Sn{n}$-module in a given simple $B_{n}(\varepsilon N)$-module $M^{(\lambda)}_{n}$ can be analysed via the following lemma.

\begin{lemma}\label{lem:restriction_decomposition}
    Let $M^{(\lambda)}_n$ be a simple $B_{n}(\varepsilon N)$-module, with $|\lambda| = n - 2f$. Upon its restriction to $\mathbb{C}\Sn{n}$,
    \begin{equation*}
        \text{if}\;\;L^{(\mu)}\subset M^{(\lambda)}_n\;\;(\text{with}\;\; |\mu| = n) \;\;\text{then}\;\; \mu\in \mathrm{cl}^{(f)}_N (\lambda)\,.
    \end{equation*}
    The converse is also true whenever one of the possibilities hold:
    \begin{itemize}
        \item[i)] $\mu \in \Lambda_{n,N}$ ({\it i.e.} $\mu$ is Littlewood-admissible),
        \item[ii)] $B_{n}(\varepsilon N)$ is semisimple.
    \end{itemize}
\end{lemma}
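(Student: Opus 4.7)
The plan is to reduce the statement to a comparison of the multiplicities $\overline{c}^{\mu}_{\nu\lambda}$ governing the classical $GL(N)\downarrow G(N)$ branching with the Littlewood--Richardson coefficients $c^{\mu}_{\nu\lambda}$ used to define the closure $\mathrm{cl}^{(f)}_{N}(\lambda)$. The bridge is provided by the branching rule \eqref{eq:branching}, which, via comparison of the two Schur--Weyl decompositions in \eqref{eq:Schur-Weyl}, identifies the multiplicity of $L^{(\mu)}$ in $M^{(\lambda)}_n$ with $\sum_{\nu}\overline{c}^{\mu}_{\nu\lambda}$ (summed over even partitions $\nu$ of $2f$, with the hat-map absorbed in notation). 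Hence proving the lemma reduces to controlling when this sum is nonzero in terms of Littlewood--Richardson data.

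For the forward direction, suppose $L^{(\mu)}\subset M^{(\lambda)}_n$. By the branching rule there exists an even $\nu$ with $|\nu|=2f$ and $\overline{c}^{\mu}_{\nu\lambda}\neq 0$. Invoking the Jang--Kwon description mentioned after \eqref{eq:branching}, $\overline{c}^{\mu}_{\nu\lambda}$ counts the Yamanouchi semi-standard tableaux enumerated by $c^{\mu}_{\nu\lambda}$ that satisfy additional $N$-dependent constraints; therefore $\overline{c}^{\mu}_{\nu\lambda}\leq c^{\mu}_{\nu\lambda}$, giving $c^{\mu}_{\nu\lambda}\neq 0$. Since $L^{(\mu)}\subset V^{\otimes n}$ one automatically has $\mu\in\Sigma_{n,N}$. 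These two facts together are exactly the defining conditions of $\mathrm{cl}^{(f)}_{N}(\lambda)$.

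For the converse in case $(i)$, the Littlewood restriction rule asserts that when $\mu\in\Lambda_{n,N}$ the $N$-dependent constraints are vacuous, whence $\overline{c}^{\mu}_{\nu\lambda}=c^{\mu}_{\nu\lambda}$. The hypothesis $\mu\in\mathrm{cl}^{(f)}_{N}(\lambda)$ supplies an even $\nu$ with $c^{\mu}_{\nu\lambda}\neq 0$, so $\overline{c}^{\mu}_{\nu\lambda}\neq 0$, and \eqref{eq:branching} gives $L^{(\mu)}\subset M^{(\lambda)}_n$. For case $(ii)$, I would argue that when $B_{n}(\varepsilon N)$ is semisimple, the simple modules $M^{(\lambda)}_n$ are isomorphic to the generic cell modules of the Brauer algebra, whose abstract restriction to $\mathbb{C}\Sn{n}$ is given by the plain Littlewood--Richardson coefficients: $[M^{(\lambda)}_n:L^{(\mu)}]=\sum_{\nu\,\mathrm{even}}c^{\mu}_{\nu\lambda}$. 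Comparing with the $\overline{c}$-formula forces $\overline{c}^{\mu}_{\nu\lambda}=c^{\mu}_{\nu\lambda}$ on the whole of $\Sigma_{n,N}$, and the argument of case $(i)$ then applies.

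The main obstacle is precisely the gap between $\overline{c}^{\mu}_{\nu\lambda}$ and $c^{\mu}_{\nu\lambda}$ outside the Littlewood-admissible regime: for $\mu\notin\Lambda_{n,N}$ with $N$ below the semisimplicity threshold, the Jang--Kwon cut-offs can strictly lower $\overline{c}$ and may make it vanish while $c$ does not, so the converse would fail. Hence the heart of the statement is to verify that the two regimes under which the converse is claimed---Littlewood-admissibility of $\mu$, and semisimplicity of $B_{n}(\varepsilon N)$---are exactly the circumstances under which those extra constraints do not cut down the number of Littlewood--Richardson tableaux to zero, an assertion that in both cases reduces to a known (non-trivial, but off-the-shelf) theorem.
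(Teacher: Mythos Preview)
Your proof is correct and reaches the same conclusions, but it is organized differently from the paper's and leans on a different external input for the forward direction. The paper works entirely through the standard (cell) modules $\Delta^{(\lambda)}_n$: it recalls that $M^{(\lambda)}_n$ is the simple head of $\Delta^{(\lambda)}_n$, cites the Hanlon--Wales result that the $\mathbb{C}\Sn{n}$-restriction of $\Delta^{(\lambda)}_n$ is governed exactly by the plain Littlewood--Richardson coefficients $c^{\mu}_{\nu\lambda}$ (summed over even $\nu$), and obtains the forward implication simply because every $\mathbb{C}\Sn{n}$-constituent of a quotient already occurs in $\Delta^{(\lambda)}_n$. Your forward direction instead invokes the Jang--Kwon description of $\overline{c}^{\mu}_{\nu\lambda}$ as a constrained tableau count, hence $\overline{c}\leqslant c$; this is valid but imports a heavier and more recent theorem from the classical-group side, whereas the paper's route stays internal to the Brauer algebra. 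For the converse the two arguments essentially coincide: case~$(i)$ is the Littlewood restriction rule in both versions, and in case~$(ii)$ you also end up invoking cell modules and their LR restriction, which is precisely the paper's mechanism (semisimplicity gives $\Delta^{(\lambda)}_n\cong M^{(\lambda)}_n$, so Hanlon--Wales becomes a two-sided criterion). The paper's organization has the side benefit that the standard modules, once introduced, are reused later in the text; your version postpones them but pays for that with the dependence on the Jang--Kwon combinatorics.
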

\begin{proof}
    We need to recall some basic facts about the {\it standard $B_{n}(\delta)$-modules} $\Delta^{(\lambda)}_{n}$, where $\delta \in \mathbb{C}$ (see \cite[formula (2.5)]{CDVM_Br_blocks} and references therein, more detailed discussion is postponed to Section \ref{sec:splitting_idempotent}). Standard modules are labelled by all partitions $\lambda\vdash n-2f$ for $f = 0,\dots,\lfloor{\frac{n}{2}\rfloor}$. Each $\Delta^{(\lambda)}_n$ is an indecomposable module (but not necessarily simple when $\delta\in\mathbb{Z}$), such that $M^{(\lambda)}_n \cong \Delta^{(\lambda)}_n\slash K$ for the maximal proper submodule $K$.  In the semisimple regime of the Brauer algebra standard modules are simple, so $\Delta^{(\lambda)}_n \cong M^{(\lambda)}_n$.
    \vskip 4 pt
    
   The decomposition of standard modules into simple summands upon restriction to $\mathbb{C}\Sn{n}$ was described in \cite[Theorem 4.1]{HW_discriminants_Br_algebra}, which implies $L^{(\mu)} \subset \Delta^{(\lambda)}_n$ iff $c^{\mu}_{\nu\lambda}\neq 0$ for some even $\nu$. Moreover, for the Littlewood-Richardson coefficient to be non-zero, $\lambda \subset \mu$ and $|\nu| = |\mu| - |\lambda| = 2f$. The additional restrictions on the number of rows/columns in the definition of $\mathrm{cl}^{(f)}_{N}(\lambda)$ reflect the restrictions on the $\mathbb{C}\Sn{n}$-modules appearing in $V^{\otimes n}$ according to the classical Schur-Weyl duality \cite{Weyl}.
   \vskip 4 pt
   
   To finish the proof, for $\mu\in \Lambda_{n,N}$ the multiplicities of $L^{(\mu)}$ in the branching rules \eqref{eq:branching} are Littlewood-Richardson coefficients.
\end{proof}

\paragraph{Eigenvalues of $A_n$.} In what follows we make use of the results of \cite{Nazarov} (see also \cite{CDVM_Br_blocks} in order to cover all $\delta \in \mathbb{Z}$). Consider the following set of pairwise-commuting elements known as {\it Jucys-Murphy elements} $x_k$ ($k = 1,\dots, n$): 
\begin{equation}
    x_{1} = \frac{\varepsilon N-1}{2}\quad \text{and} \quad x_{k} = \frac{\varepsilon N-1}{2} + \sum_{j=1}^{k-1} (s_{jk} - d_{jk})\quad \text{for}\quad k\geqslant 2\,.
\end{equation}
Among the applications of the latter in the context of the present work, we mention that they can be used for constructing the maximal commutative subalgebra in $B_{n}(\varepsilon N)$, whose elements are diagonalisable on $B_{n}(\varepsilon N)$-modules in $V^{\otimes n}$. We concentrate on the following central element
\begin{equation}\label{eq:master_class_JM}
    X_B = \sum_{k = 1}^{n} x_k = n\,\frac{\varepsilon N-1}{2} + X_S - A_n\,\quad \text{where} \quad X_S = \sum_{i<j} s_{ij}\quad\text{and \eqref{eq:master_class} was used.}
\end{equation}
Its value on the simple $B_{n}(\varepsilon N)$-module $M^{(\lambda)}_n$ is $(n - 2f)\,\frac{\varepsilon N-1}{2} + c(\lambda)$. The term $X_S$ is a sum of Jucys-Murphy elements in the algebra $\mathbb{C}\Sn{n}$ \cite{Jucys,Murphy}: it is central in $\mathbb{C}\Sn{n}$ and proportional to identity on any simple $\mathbb{C}\Sn{n}$-module $L^{(\mu)}$ with the coefficient $c(\mu)$ (a possible way to check this is to apply \eqref{eq:master_class_JM} to $M^{(\mu)}_{n} \cong L^{(\mu)}$ taking into account that $d_{ij}\big(M^{(\mu)}_n\big) = 0$ for $\mu\vdash n$, see \cite{Nazarov}).
\vskip 4 pt

Define the following set of skew-shape diagrams:
\begin{equation}\label{eq:skew-shape}
    \overline{\Lambda}^{(f)}_{n,N} = \bigcup_{\lambda\in \Lambda_{n-2f,N}} \bigcup_{\mu \in \mathrm{cl}^{(f)}_N(\lambda)} \mu\backslash \lambda\,.
\end{equation}
Variations of the following lemma are known in the literature (see, {\it e.g.}, the proofs in \cite[Theorem 2.6]{Nazarov} or \cite[Proposition 4.2]{CDVM_Br_blocks} and references therein), nevertheless we give the proof to keep the narrative self-contained.
\begin{lemma}\label{lem:D_block_diagonal}
    Let $M^{(\lambda)}_n$ be a simple $B_{n}(\varepsilon N)$-module, and let a simple $\mathbb{C}\Sn{n}$-module $L^{(\mu)}$ occur in the decomposition of $M^{(\lambda)}_n$ into irreducible summands upon restriction to $\mathbb{C}\Sn{n}$. Then $\mu\backslash\lambda\in \overline{\Lambda}_{n,N}^{(f)}$ and
    \begin{equation}\label{eq:eigenvalue_A}
        \text{for any}\quad v\in L^{(\mu)}\,,\quad A_n(v) = \big((\varepsilon N-1)\,f + c(\mu\backslash\lambda)\big)\,v\,.
    \end{equation}
\end{lemma}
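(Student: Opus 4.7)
The plan is to exploit the identity \eqref{eq:master_class_JM}, which expresses $A_n$ as a difference of two objects that are each already diagonal with a known scalar on the relevant subspace. Specifically, I would first rearrange that identity to
\begin{equation*}
    A_n = n\,\frac{\varepsilon N - 1}{2} + X_S - X_B\,,
\end{equation*}
and recall the two spectral facts stated in the paragraph just before the lemma: $X_B$ is central in $B_{n}(\varepsilon N)$ and acts on the simple module $M_n^{(\lambda)}$ by the scalar $(n - 2f)\,\tfrac{\varepsilon N - 1}{2} + c(\lambda)$, while $X_S$ is central in $\mathbb{C}\Sn{n}$ and acts on any simple $\mathbb{C}\Sn{n}$-module $L^{(\mu)}$ by $c(\mu)$.

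With these two scalars in hand, the computation is then direct: on the subspace $L^{(\mu)} \subset M_n^{(\lambda)}$ the operator $A_n$ acts as the scalar
\begin{equation*}
    n\,\tfrac{\varepsilon N - 1}{2} + c(\mu) - \bigl[(n - 2f)\,\tfrac{\varepsilon N - 1}{2} + c(\lambda)\bigr] = (\varepsilon N - 1)\,f + \bigl(c(\mu) - c(\lambda)\bigr),
\end{equation*}
and the additivity $c(\mu) - c(\lambda) = c(\mu\backslash\lambda)$ built into the definition of the content of a skew shape yields exactly \eqref{eq:eigenvalue_A}.

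The statement $\mu\backslash\lambda \in \overline{\Lambda}^{(f)}_{n,N}$ is essentially bookkeeping: since $M_n^{(\lambda)}$ is a submodule of $V^{\otimes n}$, the label $\lambda$ lies in $\Lambda_{n-2f,N}$, and the occurrence of $L^{(\mu)}$ in its restriction to $\mathbb{C}\Sn{n}$ places $\mu$ in $\mathrm{cl}^{(f)}_N(\lambda)$ by Lemma \ref{lem:restriction_decomposition}. Hence the pair $(\lambda,\mu)$ contributes the skew shape $\mu\backslash\lambda$ to the union defining $\overline{\Lambda}^{(f)}_{n,N}$ in \eqref{eq:skew-shape}.

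There is no real obstacle here; the only care needed is to make sure the two scalar values for $X_B$ and $X_S$ are invoked with the correct normalization, and to cite properly that $X_B$ and $X_S$ are genuinely proportional to the identity on $M_n^{(\lambda)}$ and $L^{(\mu)}$ respectively — the former because $X_B$ is central in $B_n(\varepsilon N)$ and $M_n^{(\lambda)}$ is simple, the latter because $X_S$ is central in $\mathbb{C}\Sn{n}$ and $L^{(\mu)}$ is simple. Everything else is algebraic manipulation.
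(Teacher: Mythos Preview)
Your proposal is correct and follows essentially the same approach as the paper: both use the identity \eqref{eq:master_class_JM} together with the known scalar actions of $X_B$ on $M_n^{(\lambda)}$ and of $X_S$ on $L^{(\mu)}$, then invoke Lemma \ref{lem:restriction_decomposition} for the inclusion $\lambda\subset\mu$ and the membership $\mu\backslash\lambda\in\overline{\Lambda}^{(f)}_{n,N}$. Your write-up simply makes the arithmetic more explicit.
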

\begin{proof}
    The relation \eqref{eq:master_class_JM}, together with the fact that $X_B$ and $X_S$ are both proportional to identity on $L^{(\mu)}\subset M^{(\lambda)}_n$, implies that $A_n$ is proportional to identity on $L^{(\mu)}$ as well. According to the structure of the decomposition of $M^{(\lambda)}_n$ upon restriction to $\mathbb{C}\Sn{n}$ (see Lemma \ref{lem:restriction_decomposition}), one has $\lambda \subset \mu$, so $c(\mu) - c(\lambda) = c(\mu\backslash\lambda)$, and the eigenvalue in the assertion is a direct consequence of \eqref{eq:master_class_JM} for $X_B$ (respectively, $X_S$) restricted to $M^{(\lambda)}_n$ (respectively, to $L^{(\mu)}$).
\end{proof}

\noindent Together with the fact that $V^{\otimes n}$ decomposes as a direct sum of simple $B_{n}(\varepsilon N)$-modules, the above lemma provides an alternative proof (along with Lemma \ref{lem:KerA}) that $A_n$ is diagonalisable on $V^{\otimes n}$. In order to describe the eigenvalues, define the set
\begin{equation}\label{eq:spec_master_class}
        \mathrm{spec}^{\times}(A_n) = \big\{(\varepsilon N-1)\,f + c(\mu\backslash\lambda) \;\; : \;\; f = 1,\dots,\lfloor\tfrac{n}{2}\rfloor\,,\quad \mu\backslash\lambda\in \overline{\Lambda}^{(f)}_{n-2f,N} \big\}\cap \varepsilon \mathbb{N}\,.
\end{equation}
\begin{proposition}\label{prop:spectre_A}
    Any non-zero eigenvalue of $A_n$ on $V^{\otimes n}$ is contained in $\mathrm{spec}^{\times}(A_n)$. When $B_{n}(\varepsilon N)$ is semisimple, any element of $\mathrm{spec}^{\times}(A_n)$ is a non-zero eigenvalue of $A_n$ on $V^{\otimes n}$.
\end{proposition}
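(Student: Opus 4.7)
The plan is to establish the two inclusions separately by combining the Schur–Weyl decomposition \eqref{eq:Schur-Weyl} with Lemmas \ref{lem:KerA}, \ref{lem:restriction_decomposition} and \ref{lem:D_block_diagonal}. The whole argument is essentially a bookkeeping exercise once these ingredients are assembled.

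For the first assertion, I would start from the decomposition $V^{\otimes n}\cong \bigoplus_{f,\lambda} D^{(\lambda)}\otimes M^{(\hat\lambda)}_n$ with $\lambda\in\hat{\Lambda}_{n-2f,N}$, and further refine each $M^{(\hat\lambda)}_n$ into its simple $\mathbb{C}\Sn{n}$-summands $L^{(\hat\mu)}$. By Lemma \ref{lem:D_block_diagonal}, $A_n$ acts on each such $L^{(\hat\mu)}$ as the scalar $(\varepsilon N-1)f+c(\mu\backslash\lambda)$ with $\mu\in\mathrm{cl}^{(f)}_N(\lambda)$, so $\mu\backslash\lambda\in \overline{\Lambda}^{(f)}_{n,N}$. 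A non-zero eigenvalue forces $f\geqslant 1$, since $f=0$ yields $\mu=\lambda$ and hence the scalar $0$. Finally, Lemma \ref{lem:KerA} pins every non-zero eigenvalue to $\varepsilon\mathbb{N}$. Combining these observations matches \eqref{eq:spec_master_class} exactly, giving the inclusion $\textrm{spec}^{\times}_{\mathrm{eff}}(A_n)\subset \mathrm{spec}^{\times}(A_n)$.

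For the second assertion I would fix an element $(\varepsilon N-1)f+c(\mu\backslash\lambda)\in \mathrm{spec}^{\times}(A_n)$, with $f\geqslant 1$, $\lambda\in\Lambda_{n-2f,N}$ and $\mu\in\mathrm{cl}^{(f)}_N(\lambda)$. Since $\lambda\in\Lambda_{n-2f,N}$ and $B_{n}(\varepsilon N)$ is semisimple, the simple $B_{n}(\varepsilon N)$-module $M^{(\hat\lambda)}_n$ really occurs as a direct summand of $V^{\otimes n}$. The semisimplicity assumption activates part $(ii)$ of Lemma \ref{lem:restriction_decomposition}, which turns the combinatorial condition $\mu\in\mathrm{cl}^{(f)}_N(\lambda)$ into the actual inclusion $L^{(\hat\mu)}\subset M^{(\hat\lambda)}_n$. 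Applying Lemma \ref{lem:D_block_diagonal} to this embedding exhibits $(\varepsilon N-1)f+c(\mu\backslash\lambda)$ as an eigenvalue of $A_n$ on $V^{\otimes n}$; that it is non-zero is guaranteed by the defining intersection with $\varepsilon\mathbb{N}$.

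The main subtle point is not really an obstacle but a consistency check: the intersection with $\varepsilon\mathbb{N}$ in \eqref{eq:spec_master_class} must be shown to be automatic in the semisimple regime, so that no element of $\mathrm{spec}^{\times}(A_n)$ is ``lost'' in the second direction. This follows because every value $(\varepsilon N-1)f+c(\mu\backslash\lambda)$ produced for $\mu\in\mathrm{cl}^{(f)}_N(\lambda)$ with $f\geqslant 1$ is an eigenvalue of $A_n$ on a subspace of $V^{\otimes n}$ lying outside $\mathrm{Ker}\,A_n$ (by Lemma \ref{lem:KerA}, that kernel is the traceless subspace, which is spanned by the $f=0$ isotypic components), hence it is non-zero and, again by Lemma \ref{lem:KerA}, belongs to $\varepsilon\mathbb{N}$. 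Once this is stated, both inclusions are immediate and the proof is complete.
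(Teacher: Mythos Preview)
Your proof is correct and follows essentially the same route as the paper: decompose $V^{\otimes n}$ via Schur--Weyl duality, use Lemma~\ref{lem:restriction_decomposition} to control which $L^{(\mu)}$ occur in $M^{(\lambda)}_n$, read off the scalar via Lemma~\ref{lem:D_block_diagonal}, and invoke Lemma~\ref{lem:KerA} for the sign/non-vanishing constraint. One remark: your third paragraph is superfluous for the proposition as stated, since membership in $\varepsilon\mathbb{N}$ is already part of the \emph{definition} \eqref{eq:spec_master_class} of $\mathrm{spec}^{\times}(A_n)$, so nothing can be ``lost''; the stronger fact you sketch there (that in the semisimple regime the intersection with $\varepsilon\mathbb{N}$ is automatic for $f\geqslant 1$) is true and is precisely the content of Appendix~\ref{app:KerA_proof_semisimple}, but it is not needed here.
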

\begin{proof}
    Lemma \ref{lem:restriction_decomposition} gives the necessary condition for occurrence of a simple $\mathbb{C}\Sn{n}$-module $L^{(\mu)}$ in $M^{(\lambda)}_n$ upon restriction to $\mathbb{C}\Sn{n}$, while Lemma \ref{lem:D_block_diagonal} gives the corresponding eigenvalue $\alpha_{\mu\backslash\lambda}$ of $A_n$. According to Lemma \ref{lem:KerA} one has: {\it i)} zero eigenvalues correspond to the components $D^{(\hat{\mu})}\otimes M^{(\mu)}_n$ with $\mu \in \Lambda_{n,N}$ in \eqref{eq:Schur-Weyl}, and thus to $f = 0$ in \eqref{eq:eigenvalue_A}, {\it ii)} non-zero eigenvalues are in $\varepsilon \mathbb{N}$. Therefore, if $\alpha_{\mu\backslash\lambda}$ is a non-zero eigenvalue of $A_n$ on $V^{\otimes n}$, then necessarily it is in $\mathrm{spec}^{\times}(A_n)$. When $B_{n}(\varepsilon N)$ is semisimple, Lemma \ref{lem:restriction_decomposition} becomes a criterion, so any element in \eqref{eq:spec_master_class} comes from $L^{(\mu)} \subset M^{(\lambda)}_n$ for some $\lambda \in \Lambda_{n-2f,N}$ ($f \geqslant 1$) and $\mu \in \mathrm{cl}^{(f)}_N(\lambda)$.
    \vskip 4 pt
\end{proof}
Note that imposing intersection with $\varepsilon\mathbb{N}$ is substantial in the definition of $\mathrm{spec}^{\times}(A_n)$ in \eqref{eq:spec_master_class} in order to reduce the number of elements which are not eigenvalues of $A_n$. For example, consider $N = 4$ and $\varepsilon = 1$, and take $\lambda = (2^2) \in \Lambda_{4,4}$, $\mu = (2^4) \in \mathrm{cl}^{(2)}_4(\lambda)$. One has $(N - 1) f + c(\mu\backslash\lambda) = -2$ which is not an eigenvalue of $A_n$ due to Lemma \ref{lem:KerA} (in other words, $L^{(\mu)} \not\subset M^{(\lambda)}_n$). To this end, let us note that the relevant fact here is that $\mu$ is not Littlewood-admissible. A more detailed discussion is postponed to Section \ref{sec:traceless_GL} in relation to traceless projection of simple $GL(N)$-modules.

\paragraph{\underline{Remark} (on the structure of $B_{n}(\varepsilon N)$-modules via Schur-Weyl duality).} In the semisimple regime of $B_{n}(\varepsilon N)$ one can prove Lemma \ref{lem:KerA} within the representation theory of the Brauer algebra (see Appendix \ref{app:KerA_proof_semisimple}). While outside the semisimple regime ($N < n-1$ for $\varepsilon = 1$ and $N < 2(n-1)$ for $\varepsilon = -1$, see \cite{Rui_Br_semisimple}) the following corollary of Lemmas \ref{lem:KerA} and \ref{lem:D_block_diagonal} seems to be hard to prove without addressing to tensorial representations via Schur-Weyl duality (see Appendix \ref{app:proof_KerA_Br} for proof and recall the notion of standard $B_{n}(\delta)$-modules in the proof of Lemma \ref{lem:restriction_decomposition}).
\begin{corollary}\label{cor:KerA_Br}
    Fix $N \geqslant 1$, $\varepsilon = \pm 1$ and $n \geqslant 2$ for $B_{n}(\varepsilon N)$. Take any $\lambda\in \Lambda_{n,N}$ and $\mu \in \mathrm{cl}^{(f)}_N(\lambda)$ for $2f = n - |\lambda|$ such that $L^{(\mu)}\subset M^{(\lambda)}_n$ upon restriction to $\mathbb{C}\Sn{n}$ and denote $\alpha_{\mu\backslash\lambda}$ the corresponding eigenvalue in Lemma \ref{lem:D_block_diagonal}. Then the following assertions hold:
    \begin{itemize}
        \item[{\it i)}] $\alpha_{\mu\backslash\lambda} \geqslant 0$,
        \item[{\it ii)}] $\alpha_{\mu\backslash\lambda} = 0$ iff $\lambda = \mu$.
    \end{itemize}
    As a result, for the standard module $\Delta^{(\lambda)}_n$ the following condition is sufficient to conclude that $L^{(\mu)}$ does not occur in the simple head \footnote{To recall the notion of a simple head for the situation in question, note that a standard $B_{n}(\delta)$-module is indecomposable, so the action of $B_{n}(\delta)$ admits a block upper-triangular form. The simple head is then a simple module obtained by projection to the lowest block on the diagonal.} of $\Delta^{(\lambda)}_n$:
    \begin{equation}
        \text{either}\quad \alpha_{\mu\backslash\lambda} < 0\quad\text{or}\quad\alpha_{\mu\backslash\lambda} = 0\;\;\text{for}\;\;|\lambda| < n\,.
    \end{equation}
\end{corollary}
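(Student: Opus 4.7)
The plan is to leverage Lemma \ref{lem:KerA}, which pins down the kernel and signs of eigenvalues of $A_n$ on the full space $V^{\otimes n}$, together with Lemma \ref{lem:D_block_diagonal}, which supplies the eigenvalue formula $\alpha_{\mu\backslash\lambda} = (\varepsilon N - 1)f + c(\mu\backslash\lambda)$ on each summand $L^{(\mu)} \subset M^{(\lambda)}_n$. The bridge between the two is the Schur-Weyl bimodule decomposition \eqref{eq:Schur-Weyl}, which locates any copy of $M^{(\lambda)}_n$ in $V^{\otimes n}$ inside the isotypic block $D^{(\lambda)}\otimes M^{(\lambda)}_n$.

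Assertion (i) follows immediately: since $\lambda$ is admissible, $M^{(\lambda)}_n$ embeds into $V^{\otimes n}$, so any $L^{(\mu)}\subset M^{(\lambda)}_n$ is a subspace of $V^{\otimes n}$ on which $A_n$ acts by the scalar $\alpha_{\mu\backslash\lambda}$; Lemma \ref{lem:KerA} forces this scalar into $\{0\}\cup \varepsilon\mathbb{N}$. For (ii), the ``$\Leftarrow$'' direction is instant from the explicit formula. For the ``$\Rightarrow$'' direction, assume $\alpha_{\mu\backslash\lambda}=0$: then $L^{(\mu)}\subset\mathrm{Ker}\,A_n$, which by Lemma \ref{lem:KerA} coincides with the traceless subspace of $V^{\otimes n}$. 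But the isotypic block $D^{(\lambda)}\otimes M^{(\lambda)}_n$ is contained in the traceless subspace exactly when $|\lambda|=n$; for $f\geq 1$ it lies inside the ``traceful'' complement spanned by tensors carrying at least one explicit metric factor $E$. Hence $f=0$, and so $\mu=\lambda$.

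The final assertion about the standard module $\Delta^{(\lambda)}_n$ is then a direct translation: the simple head of $\Delta^{(\lambda)}_n$ is $M^{(\lambda)}_n$, which is realised inside $V^{\otimes n}$ under the admissibility hypothesis, so the constraints (i)--(ii) apply to every $L^{(\mu)}$ appearing there. Violating either condition --- $\alpha_{\mu\backslash\lambda}<0$ (forbidden by (i)), or $\alpha_{\mu\backslash\lambda}=0$ with $|\lambda|<n$ (forbidden by (ii) since then $\mu\neq\lambda$) --- rules $L^{(\mu)}$ out of the simple head.

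The main obstacle is the ``$\Rightarrow$'' direction of (ii): a purely combinatorial argument showing $(\varepsilon N-1)f + c(\mu\backslash\lambda)\neq 0$ for all admissible pairs with $f\geq 1$ is delicate, since $c(\mu\backslash\lambda)$ can be of either sign and of comparable size to $(\varepsilon N-1)f$, and one would need to exploit the precise admissibility constraints encoded in $\Lambda_{n-2f,N}$ and $\mathrm{cl}^{(f)}_N(\lambda)$. The tensor realisation short-circuits this analysis by placing the relevant Brauer isotypic block geometrically outside the traceless subspace, which is why the argument appears hard to reproduce purely within the abstract representation theory of $B_{n}(\varepsilon N)$ in the non-semisimple regime.
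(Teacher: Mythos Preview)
Your proof is correct and follows essentially the same route as the paper: embed $M^{(\lambda)}_n$ in $V^{\otimes n}$ via Schur--Weyl duality and combine Lemma~\ref{lem:KerA} with Lemma~\ref{lem:D_block_diagonal}. For the final assertion about $\Delta^{(\lambda)}_n$, your direct contrapositive of (i)--(ii) is cleaner than the paper's argument, which detours through the composition series of $\Delta^{(\lambda)}_n$ (invoking the block condition of \cite{CDVM_Br_blocks}) to show that $A_n$ acts by the same scalar $\alpha_{\mu\backslash\lambda}$ on \emph{every} copy of $L^{(\mu)}$ in $\Delta^{(\lambda)}_n$, not just on those in its simple head --- a slightly stronger intermediate fact than the stated corollary requires.
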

\noindent 
The above Corollary does not hold outside the domain of applicability of Schur-Weyl duality. For example, consider $B_{6}(\delta)$ with $\delta = \tfrac{1}{2}\notin \mathbb{Z}$. For non-integer values of the parameter the algebra is semisimple \cite{Wenzl_structure-Br,Rui_Br_semisimple}, so $L^{(3,2,1)}\subset M^{(2)}_6$ (upon restriction to $\mathbb{C}\Sn{6}$) by Lemma \ref{lem:restriction_decomposition}, while $A_6\big(L^{(3,2,1)}\big) = 0$ by Lemma \ref{lem:D_block_diagonal}.

\section{Traceless projectors}\label{sec:traceless_projector}
\subsection{Traceless projection of $V^{\otimes n}$}
Consider the following element in the centralizer of $\mathbb{C}\Sn{n}$ in $B_{n}(\varepsilon N)$:
\begin{equation}\label{eq:projector}
    P_n = \prod_{\alpha\in \mathrm{spec}^{\times}(A_n)} \left(1 - \frac{1}{\alpha}\,A_n\right)\in C_{n}(\varepsilon N)\,,\quad \text{and define}\quad \mathfrak{P}_n = \mathfrak{r}(P_{n})\in \mathfrak{B}_{n}(N)\,.
\end{equation}
Due to \eqref{eq:flip} and \eqref{eq:master_class}, the property $(\mathfrak{P}_n)^{*} = \mathfrak{P}_n$ is manifest. The main result is summarised in the following theorem.
\begin{theorem}\label{thm:projector}
    $\mathfrak{P}_n$ is the universal traceless projector on $V^{\otimes n}$, which satisfies the properties \eqref{eq:projector_properties}.
\end{theorem}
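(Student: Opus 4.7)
The plan is to exploit the spectral decomposition of $A_n$ on $V^{\otimes n}$ that has already been set up. By Lemma \ref{lem:KerA}, $\mathfrak{r}(A_n)$ is diagonalisable, with $\mathrm{Ker}\,\mathfrak{r}(A_n)$ equal to the traceless subspace; and by Proposition \ref{prop:spectre_A}, every non-zero eigenvalue of $\mathfrak{r}(A_n)$ lies in $\mathrm{spec}^{\times}(A_n)$. Consequently
\begin{equation*}
    V^{\otimes n} \;=\; \mathrm{Ker}\,\mathfrak{r}(A_n) \;\oplus\; \bigoplus_{\alpha\in \mathrm{spec}^{\times}(A_n)} E_{\alpha}\,,
\end{equation*}
where $E_\alpha$ is the $\alpha$-eigenspace. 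I will simply verify each of $(\mathrm{P}1)$, $(\mathrm{P}2)$, $(\mathrm{P}3)$ (and idempotence) by evaluating $\mathfrak{P}_n$ on each summand.

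First, $(\mathrm{P}3)$ is immediate: by \eqref{eq:master_class} the element $A_n$ lies in $C_n(\varepsilon N)$, so any polynomial in $A_n$ does as well; in particular $P_n\in C_n(\varepsilon N)$, and by the characterisation \eqref{eq:centraliser} it commutes with every $s\in\mathbb{C}\Sn{n}$. Applying $\mathfrak{r}$ gives the commutation of $\mathfrak{P}_n$ with $\mathfrak{r}(\Sn{n})$.

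For $(\mathrm{P}2)$, if $T$ is traceless then $\mathfrak{r}(A_n)T = 0$ by Lemma \ref{lem:KerA}, so each factor $\bigl(1 - \alpha^{-1}\mathfrak{r}(A_n)\bigr)$ acts as the identity on $T$, whence $\mathfrak{P}_n T = T$. For $(\mathrm{P}1)$, let $T\in E_\beta$ with $\beta\neq 0$. Lemma \ref{lem:KerA} and Proposition \ref{prop:spectre_A} ensure $\beta\in\mathrm{spec}^{\times}(A_n)$, so the factor $\bigl(1 - \beta^{-1}\mathfrak{r}(A_n)\bigr)$ appears explicitly in $\mathfrak{P}_n$ and annihilates $T$; since the remaining factors commute with this one, $\mathfrak{P}_n T = 0$. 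Thus $\mathfrak{P}_n$ annihilates $\bigoplus_{\alpha\neq 0}E_\alpha$ and acts as the identity on $\mathrm{Ker}\,\mathfrak{r}(A_n)$, so its image is exactly the traceless subspace, giving $(\mathrm{P}1)$, and idempotence $\mathfrak{P}_n^2 = \mathfrak{P}_n$ is automatic from this block description.

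There is no genuine obstacle left at this stage: the real work has already been done in Lemma \ref{lem:KerA} (diagonalisability and the identification of $\mathrm{Ker}\,\mathfrak{r}(A_n)$ with the traceless subspace) and in Proposition \ref{prop:spectre_A} (the fact that \textit{every} non-zero eigenvalue appears in $\mathrm{spec}^{\times}(A_n)$, so that the product in \eqref{eq:projector} contains a killing factor for each non-zero eigenspace). The only subtlety to watch is that one must not include the value $\alpha = 0$ in the product, which is guaranteed by the definition of $\mathrm{spec}^{\times}(A_n)$ in \eqref{eq:spec_master_class}. Self-adjointness $(\mathfrak{P}_n)^{*} = \mathfrak{P}_n$ is noted in the paragraph preceding the theorem and follows from $(A_n)^{*} = A_n$ together with \eqref{eq:flip}.
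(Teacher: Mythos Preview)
Your proof is correct and follows essentially the same approach as the paper: both arguments rest on Lemma~\ref{lem:KerA} (diagonalisability of $A_n$ with kernel the traceless subspace) and Proposition~\ref{prop:spectre_A} (all non-zero eigenvalues lie in $\mathrm{spec}^{\times}(A_n)$), and then observe that the product in \eqref{eq:projector} kills each non-zero eigenspace while fixing the kernel. The only cosmetic difference is that the paper phrases the eigenspace decomposition via the chain $L^{(\mu)}\subset M^{(\lambda)}_n\subset V^{\otimes n}$ (invoking Lemma~\ref{lem:D_block_diagonal}), whereas you invoke the spectral decomposition of the diagonalisable operator $\mathfrak{r}(A_n)$ directly; the logical content is the same.
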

\begin{proof}
    To prove that $\mathfrak{P}_n$ is a projector, consider the decomposition of $V^{\otimes n}$ into a direct sum of irreducible $B_{n}(\varepsilon N)$-modules $M^{(\lambda)}_n$, and then decompose each of them into a direct sum of $\mathbb{C}\Sn{n}$-modules $L^{(\mu)}\subset M^{(\lambda)}_n$. $A_n$ is block-diagonal with respect to this decomposition with possible non-zero eigenvalues described by Proposition \ref{prop:spectre_A}. The zero eigenvalue exactly marks the traceless subspace of $V^{\otimes n}$, which is due to Lemma \ref{lem:KerA}. So by design of \eqref{eq:projector}, it annihilates all $\mathbb{C}\Sn{n}$-modules marked by non-zero eigenvalues of $A_n$ (there is no problem if some elements of $\mathrm{spec}^{\times}(A_n)$ do not show up in $V^{\otimes n}$). As a result,
    \begin{equation}
        A_nP_n(V^{\otimes n}) = 0\,,\quad \text{therefore}\quad P_n{}^2(V^{\otimes n}) = P_n(V^{\otimes n})\,,
    \end{equation}
    so the properties $(\mathrm{P}1)$ and $(\mathrm{P}2)$ of \eqref{eq:projector_properties} hold. The property $(\mathrm{P}3)$ of \eqref{eq:projector_properties} is manifest due to the fact that $P_n\in C_{n}(\varepsilon N)$.
\end{proof}
\noindent Let us illustrate the application of Theorem \ref{thm:projector} in some simple cases. In Examples 1 and 2 we consider $G(N) = O(N)$, so $\varepsilon = 1$.

\paragraph{Example 1 (one-dimensional space).} Let us verify that for $N = \dim V = 1$ the projector trivialises: $\mathfrak{P}_n = 0$  for all $n\geqslant 2$ (or, equivalently, that $P_n(V^{\otimes n}) = 0$). Note that in this case for any $T\in V^{\otimes n}$, $d_{ij}(T) = T$ (for all $1\leqslant i < j \leqslant n$), and hence 
\begin{equation}\label{eq:AT_dim_1}
    A_n(T) = \frac{n(n-1)}{2}\,T\,.
\end{equation}
Thus, in order to prove that $P_n(T) = 0$ it suffices to show that $\frac{n(n-1)}{2} \in \mathrm{spec}^{\times}(A_n)$. But this is directly what expression \eqref{eq:AT_dim_1} says. Let us cross-check this straightforward conclusion by computing the eigenvalues of $A_n$ from the representation-theoretic point of view. Constraints on the lengths of columns leaves the only possibility $\lambda = \varnothing$ for $n$ even and $\lambda = \Yboxdim{9pt}\yng(1)$ for $n$ odd, and $\nu = (2f_{\mathrm{max}})$ with $f_{\mathrm{max}} = \lfloor\frac{n}{2}\rfloor$. On the right-hand side of $\lambda\LRp\nu$ there is the only partition $\mu = (n)$ which belongs to $\mathrm{cl}_1^{(f_{\mathrm{max}})}(\lambda)$, so

\begin{displaymath}
\ytableausetup{mathmode,boxsize=1.1em,centertableaux}
\mu\backslash\lambda=\begin{ytableau}{\scriptstyle}
      \zero & \one & \none[\scriptstyle{\cdots}]& \nmione
\end{ytableau}
\quad\text{(for $n$ even)}\quad \text{and}\quad \mu\backslash\lambda =\ytableausetup{mathmode,boxsize=1.1em,centertableaux}
\begin{ytableau}{\scriptstyle}
      \times  & \one & \none[\scriptstyle{\cdots}]& \nmione
\end{ytableau}
\quad\text{(for $n$ odd)}\quad \Rightarrow \quad c(\mu\backslash\lambda) = \tfrac{n(n-1)}{2}\,.
\end{displaymath}

\paragraph{Example 2 (lower-rank projectors).} In the sequel we assume $N\geqslant 2$. Let us derive the basic well-known example: the traceless projector for $n=2$. In this case $\mathrm{spec}^{\times}(A_2)$ is constituted by the only value $f = 1$ in \eqref{eq:spec_master_class} for $\lambda = \varnothing$ and $\nu = \Yboxdim{9pt}\young(~~)$:
\begin{equation*}
    \mu = \varnothing \LRp \Yboxdim{9pt}\young(~~) = \young(~~)\quad\Rightarrow\quad \mu\backslash\lambda = \young(\zero\one)\,,\quad \alpha = (N-1) + c(\mu\backslash\lambda) = N\,,
\end{equation*}
so one immediately arrives at the expected result
\begin{equation}
    P_2 = 1-\frac{1}{N}\,A_2 = \,
    \raisebox{-.4\height}{\includegraphics[width=15pt,height=20pt]{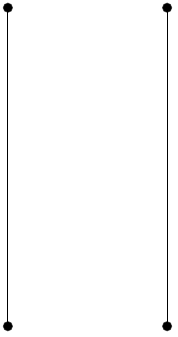}}
    \, -\frac{1}{N} \ 
    \raisebox{-.4\height}{\includegraphics[width=15pt,height=20pt]{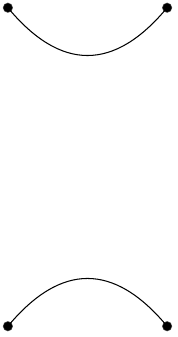}}
\end{equation}

The case $n=3$ is slightly more cumbersome to construct from scratch (by solving the traceless Ansatz for a tensor), but is still elementary from the point of view of $B_{3}(N)$. As in the previous example, $\mathrm{spec}^{\times}(A_3)$ is obtained for $f = 1$ in \eqref{eq:spec_master_class} (again, $\nu = \Yboxdim{9pt}\young(~~)$), with
\begin{equation}
\def\arraystretch{1.4}
\lambda = \Yboxdim{9pt}\young(~)\,,\quad \mu = \left\{\begin{array}{l}
        \Yboxdim{9pt}\young(~~~)\\
        \Yboxdim{9pt}\young(~~,~)
    \end{array}\right.
    \quad \Rightarrow \quad \mu\backslash\lambda = \left\{
    \begin{array}{ll}
        \Yboxdim{9pt}\young(\times \one \two)\,, & \alpha = (N-1) + 3 \\
        \Yboxdim{9pt}\young(\times \one,\mione)\,, & \alpha = (N-1) + 0
    \end{array}
    \right.
\end{equation}
\noindent The operator \eqref{eq:projector} takes the form

\begin{equation}\label{eq:projector_Br_3}
\begin{aligned}
    P_3 &=\left(1-\frac{1}{N-1}\,A_3\right)\left(1-\frac{1}{N+2}\,A_3\right) \\
      &=1-\frac{2N+1}{(N-1)(N+2)}\,A_3+\frac{1}{(N-1)(N+2)}\,(A_3)^2 \\
      &=
      \Scale[0.8]{\raisebox{-.4\height}{\includegraphics[width=15pt,height=20pt]{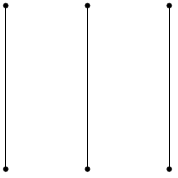}}}
      -\frac{N+1}{(N-1)(N+2)}\,\Bigl(\  
      \Scale[0.8]{\raisebox{-.4\height}{\includegraphics[width=25pt,height=20pt]{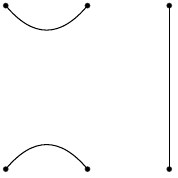}}+\raisebox{-.4\height}{\includegraphics[width=25pt,height=20pt]{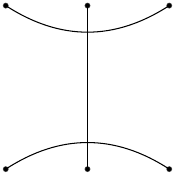}}+\raisebox{-.4\height}{\includegraphics[width=25pt,height=20pt]{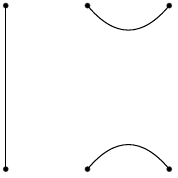}}\  }
      \Bigr)\,+  \frac{1}{(N-1)(N+2)}\, \ \Bigl(
      \Scale[0.8]{ \raisebox{-.4\height}{\includegraphics[width=25pt,height=20pt]{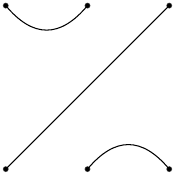}}+\raisebox{-.4\height}{\includegraphics[width=25pt,height=20pt]{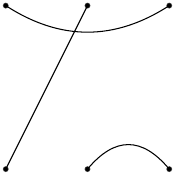}}+\raisebox{-.4\height}{\includegraphics[width=25pt,height=20pt]{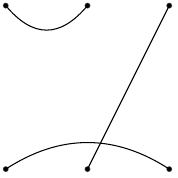}}+\raisebox{-.4\height}{\includegraphics[width=25pt,height=20pt]{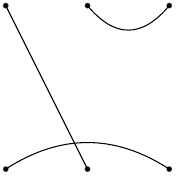}}+\raisebox{-.4\height}{\includegraphics[width=25pt,height=20pt]{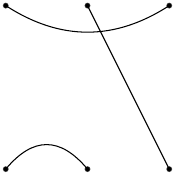}}+\raisebox{-.4\height}{\includegraphics[width=25pt,height=20pt]{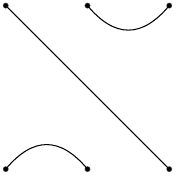}}}
      \Bigr)\,
\end{aligned}
\end{equation}
\noindent Note that the denominators in the above expression are singular at $N=1$, which is not a problem according to Theorem \ref{thm:projector}: one simply omits the factor with the eigenvalue which turns to $0$ upon putting $N=1$, which brings us back to Example 1.
\vskip 6 pt

In the following two examples we consider $G(N) = Sp(N)$, so $\varepsilon = -1$.

\paragraph{Example 3 (lower-rank projectors).} Let us reproduce the projector in the obvious case $n = 2$. There is the only element in $\mathrm{spec}^{\times}(A_n)$ obtained for $f = 1$ from $\lambda = \varnothing$ and $\nu = \Yboxdim{9pt}\young(~~)$ :
\begin{equation*}
    \mu = \varnothing \LRp\Yboxdim{9pt}\young(~~) = \young(~~)\quad\Rightarrow\quad \mu\backslash\lambda = \young(\zero\one)\,,\;\; -(N+1)\,f + c(\mu\backslash\lambda) = -N\,,
\end{equation*}
so one immediately arrives at the expected form of the projector:
\begin{equation}\label{eq:projector_rank-2_Sp}
    P_2 = 1+\frac{1}{N}\,A_2 = \,
    \raisebox{-.4\height}{\includegraphics[width=15pt,height=20pt]{Id2.pdf}}
    \,+\frac{1}{N} \ 
    \raisebox{-.4\height}{\includegraphics[width=15pt,height=20pt]{d1.pdf}}
\end{equation}

For $n=3$ suppose $N > 2$ (the case $N=2$ will be considered below), then there are two diagrams contributing to $\mathrm{spec}^{\times}(A_n)$ arising from $\lambda = \Yboxdim{9pt}\young(~)$ and $\nu = \Yboxdim{9pt}\young(~~)$ ($f = 1$):
\begin{equation}
\def\arraystretch{1.4}
\lambda\LRp\nu = \Yboxdim{9pt}\young(~~,~) + \young(~~~)\,,\;\;\text{so}\quad \mu = \left\{\begin{array}{l}
        \Yboxdim{9pt}\young(~~~)\\
        \Yboxdim{9pt}\young(~~,~)
    \end{array}\right.\quad \Rightarrow \quad \mu\backslash\lambda = \left\{
    \begin{array}{ll}
        \Yboxdim{9pt}\young(\times \one \two)\,, & \alpha = -(N+1) + 3 \\
        \Yboxdim{9pt}\young(\times \one,\mione)\,, & \alpha = -(N+1) + 0
    \end{array}
    \right.
\end{equation}
\noindent The operator \eqref{eq:projector} takes the form
\begin{equation}\label{eq:projector_rank-3_Sp}
\begin{aligned}
    P_3 &=\left(1+\frac{1}{N-2}\,A_3\right)\left(1+\frac{1}{N+1}\,A_3\right) \\
      &=1+\frac{2N-1}{(N-2)(N+1)}\,A_3+\frac{1}{(N-2)(N+1)}\,(A_3)^2 \\
      &=
      \Scale[0.8]{\raisebox{-.4\height}{\includegraphics[width=15pt,height=20pt]{Id3.pdf}}}
     +\frac{N-1}{(N-2)(N+1)}\,\Bigl(\  
      \Scale[0.8]{\raisebox{-.4\height}{\includegraphics[width=25pt,height=20pt]{b3_1.pdf}}+\raisebox{-.4\height}{\includegraphics[width=25pt,height=20pt]{b3_2.pdf}}+\raisebox{-.4\height}{\includegraphics[width=25pt,height=20pt]{b3_3.pdf}}\  }
      \Bigr)\,+  \frac{1}{(N-2)(N+1)}\, \ \Bigl(
      \Scale[0.8]{ \raisebox{-.4\height}{\includegraphics[width=25pt,height=20pt]{b3_4.pdf}}+\raisebox{-.4\height}{\includegraphics[width=25pt,height=20pt]{b3_5.pdf}}+\raisebox{-.4\height}{\includegraphics[width=25pt,height=20pt]{b3_6.pdf}}+\raisebox{-.4\height}{\includegraphics[width=25pt,height=20pt]{b3_7.pdf}}+\raisebox{-.4\height}{\includegraphics[width=25pt,height=20pt]{b3_8.pdf}}+\raisebox{-.4\height}{\includegraphics[width=25pt,height=20pt]{b3_9.pdf}}}
      \Bigr)\,
\end{aligned}
\end{equation}

\paragraph{Example 4 (two-dimensional space, arbitrary rank).}Consider a two-dimensional space $N = 2$. In this case each index set $\Lambda_{n-2f,2}$ is constituted by the single partition $(1^{n-2f})$, so $ \overline{\Lambda}^{(f)}_{n,2}$ is constituted by $(n-f,f)^{\prime}\backslash (1^{n-2f})$. The corresponding eigenvalue is
\begin{equation}
    -(N+1)\,f + c(\mu\backslash\lambda) = -(n-f+1)f\,,\quad\text{so}\quad \mathrm{spec}^{\times}(A_n) = \{-(n-f+1)f\;:\;f = 1,\dots,\lfloor\tfrac{n}{2}\rfloor\}\,.
\end{equation}
Note that for $n=2$ one has $\mathrm{spec}^{\times}(A_2) = \{-2\}$, which leads immediately to \eqref{eq:projector_rank-2_Sp} above. Note that $\mathrm{spec}^{\times}(A_n) \subset -\mathbb{N}$, which is in agreement with Proposition \ref{prop:spectre_A}. Let us consider $n=3$ in detail, such that $\mathfrak{r}$ is not injective. Due to the restrictions on the number of columns, $\mathrm{spec}^{\times}(A_3) = \{-3\}$, and the only factor constituting the projector \eqref{eq:projector_rank-3_Sp} survives: 
\begin{equation}\label{eq:projector_Sp_non-stable}
    P_3 = \left(1+\frac{1}{3}\,A_n\right) =
      \Scale[0.8]{\raisebox{-.4\height}{\includegraphics[width=15pt,height=20pt]{Id3.pdf}}}
     +\frac{1}{3}\,\Bigl(\  
      \Scale[0.8]{\raisebox{-.4\height}{\includegraphics[width=25pt,height=20pt]{b3_1.pdf}}+\raisebox{-.4\height}{\includegraphics[width=25pt,height=20pt]{b3_2.pdf}}+\raisebox{-.4\height}{\includegraphics[width=25pt,height=20pt]{b3_3.pdf}}\  }
      \Bigr)
\end{equation}
By a direct calculation one observes that vanishing of traces of the $\mathfrak{r}(P_3)$-image of a tensor is not due to straightforward cancellation of all terms, but rather due to the fact that anti-symmetrization of tree two-dimensional vectors vanishes identically, so $d_{ij} P_3\in \mathrm{Ker}(\mathfrak{r})$ for all $1\leqslant i<j\leqslant 3$.

\paragraph{\underline{Remark}.} At this stage it becomes clear that the main computational difficulty resides in expanding the factorised formula \eqref{eq:projector} diagram-wise. A technique which allows one to partially circumvent the latter difficulty is presented in Section \ref{sec:bracelets}.

\subsection{Traceless projection of simple $GL(N)$-modules.}\label{sec:traceless_GL}

The projector \eqref{eq:projector} is constructed in a way to take into account {\it all} simple $G(N)$-modules occurring in $V^{\otimes n}$. However in applications to tensor calculus in physics one often starts with a certain $GL(N)$-module $V^{(\mu)} \subset V^{\otimes n}$ and performs its traceless projection. To mention a number of examples, in application to higher-spin theory and strings, symmetric tensor fields serve as a primer \cite{Fronsdal_massless,Bonelli_strings_HS} (see \cite{Isaev_HS} for application of the Brauer algebra in the context of Fronsdal fields). One can also find extensive studies of fields of arbitrary symmetry types (the mixed-symmetry fields), see \cite{Labastida_mixed,Siegel_Zwiebach,BMV_conjecture,Boul-Bek_mixed_GL} and references therein.
\vskip 4 pt

\paragraph{Restriction of $\mathfrak{P}_n$.} In this section we will adapt the construction of the universal traceless projector \eqref{eq:projector} for obtaining traceless projection of each simple $GL(N)$-module. In this respect one ignores the metric $\langle\,\cdot,\cdot\,\rangle$ for a moment and takes the first decomposition in \eqref{eq:Schur-Weyl} as a starting point. A particular choice of $G(N)$ is nevertheless anticipated by fixing one of the two ways for permutations to act on $V^{\otimes n}$ regarding the choice of $\varepsilon = \pm 1$ (recall the definition of $\mathfrak{r}(s_i)$ in \eqref{eq:action_generators}). A simple $GL(N)$-module $V^{(\mu)}$ (with some $\mu\in\hat{\Sigma}_{n,N}$) can be realised by applying a primitive idempotent $I^{(\hat{\mu})}\in\mathbb{C}\Sn{n}$ to $V^{\otimes n}$, such that
\begin{equation}
    \mathfrak{r}\big(I^{(\hat{\mu})}\big) V^{\otimes n} \cong V^{(\mu)}\,,\quad\text{and}\quad\mathbb{C}\Sn{n} I^{(\hat{\mu})} \cong L^{(\hat{\mu})}\quad\text{(with respect to left multiplication in $\mathbb{C}\Sn{n}$)}\,.
\end{equation}
As a particular well-known example of a primitive idempotent, one can take any Young symmetriser $Y_{\mathsf{t}(\hat{\mu})}\in\mathbb{C}\Sn{n}$ ($\mathsf{t}(\hat{\mu})$ being a standard Young tableau of shape $\hat{\mu}$). Let us also mention that aside from the set of Young symmetrisers, which are primitive idempotents but not orthogonal ones, the complete set of primitive orthogonal idempotents is available for $\mathbb{C}\Sn{n}$ (see \cite{Molev} and references therein, see also \cite{Garsia2020}).
\vskip 4 pt

Returning the metric into consideration, we will aim at explicit realisation of the following idea. Traceless projection of $V^{(\mu)}$ means restriction to $G(N)$ and projection to the submodule $D^{(\mu)}\subset V^{(\mu)}$ (see, {\it e.g.}, \cite[Section 3.2]{Nazarov_tracelessness}). With the second decomposition in \eqref{eq:Schur-Weyl} at hand and applying Lemma \ref{lem:KerA}, traceless projection of $V^{(\mu)}$ implies annihilating all simple $B_{n}(\varepsilon N)$-modules which are labelled by $\Lambda_{n-2f,N}$ with $f\geqslant 1$ and which contain a module $L^{(\hat{\mu})}$.
\vskip 4 pt

Define the set of diagrams which parametrises all simple $B_{n}(\varepsilon N)$-modules where a given $\mathbb{C}\Sn{n}$-module $L^{(\rho)}$ can occur by Lemma \ref{lem:restriction_decomposition}: for any $\rho \in \Lambda_{n,N}$ set
\begin{equation}
\def\arraystretch{0.7}
        \Lambda^{(\rho)}_N = \bigcup_{\begin{array}{c}
            {\scriptstyle \text{even}\;\;\nu\subset \rho\,,}\\
            {\scriptstyle |\nu| = 2f\,,\;\;f \geqslant 0}
        \end{array}} (\rho \slashdiv \nu)\cap \Lambda_{n-2f,N} \,,
\end{equation}
where one makes use of the reverse of the jeu de taquin described in Section \ref{sec:Brauer_diagrams}. Note that $\rho \in \Lambda^{(\rho)}_N$, and for any other $\sigma\in \Lambda^{(\rho)}_N$ holds $|\sigma| < |\rho|$. Joining the above sets over all possible labels of $\mathbb{C}\Sn{n}$-modules in $V^{\otimes n}$ one reconstructs the labels of all $B_{n}(\varepsilon N)$-modules in $V^{\otimes n}$, so 
\begin{equation}
    \Lambda_{n,N} = \bigcup_{\rho\in\Sigma_{n,N}} \Lambda^{(\rho)}_N\quad \text{(in general, $\Lambda^{(\rho_1)}_N\cap \Lambda^{(\rho_2)}_N \neq \varnothing$).}
\end{equation}
Continuing along these lines, we consider the following subsets $\mathrm{spec}^{\times}_{\rho}(A_n) \subset \mathrm{spec}^{\times}(A_n)$
\begin{equation}\label{eq:spec_reduced}
    \mathrm{spec}^{\times}_{\rho}(A_n) = \big\{(\varepsilon N - 1)f + c(\rho\backslash\sigma)\;:\;f = 1,\dots,\lfloor{\tfrac{n}{2}\rfloor}\,, \;\;\sigma \in \Lambda^{(\rho)}_N\,,\;\;|\rho| - |\sigma| = 2f \big\}\cap \varepsilon \mathbb{N}\,.
\end{equation}
The following proposition is a restricted version of Proposition \ref{prop:spectre_A} and is proven along the same lines.
\begin{proposition}\label{prop:spectre_A_red}
    Let $\mu \in \Sigma_{n,N}$, so $V^{(\hat{\mu})}\otimes L^{(\mu)}\subset V^{\otimes n}$. Any non-zero eigenvalue of $A_n$ on $V^{(\hat{\mu})}\otimes L^{(\mu)}$ is contained in $\mathrm{spec}^{\times}_{\mu}(A_n)$. Conversely, any element of $\mathrm{spec}^{\times}_{\mu}(A_n)$ is an eigenvalue of $A_n$ on $V^{(\hat{\mu})}\otimes L^{(\mu)}$ if one of the following conditions hold:
    \begin{itemize}
        \item[i)] $\mu \in \Lambda_{n,N}$ ({\it i.e.} $\mu$ is Littlewood-admissible),
        \item[ii)] $B_{n}(\varepsilon N)$ is semisimple.
    \end{itemize}
\end{proposition}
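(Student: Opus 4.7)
The plan is to mimic the proof of Proposition \ref{prop:spectre_A} but localise the entire analysis to the $L^{(\mu)}$-isotypic component of $V^{\otimes n}$ as a $\mathbb{C}\Sn{n}$-module, which under Schur-Weyl duality is precisely $V^{(\hat{\mu})}\otimes L^{(\mu)}$. The first observation I would record is that $A_n\in C_n(\varepsilon N)$ commutes with $\mathbb{C}\Sn{n}$, so it preserves this isotypic component; equivalently, in the second decomposition of \eqref{eq:Schur-Weyl}, $A_n$ acts along each factor $M_n^{(\hat{\lambda})}$ and thus acts by a scalar on every copy of $L^{(\mu)}$ embedded in $M_n^{(\hat{\lambda})}$ upon restriction to $\mathbb{C}\Sn{n}$. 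By Lemma \ref{lem:D_block_diagonal}, this scalar is exactly $(\varepsilon N-1)f+c(\mu\backslash\lambda)$ with $|\lambda|=n-2f$. So identifying the eigenvalues of $A_n$ on $V^{(\hat{\mu})}\otimes L^{(\mu)}$ amounts to identifying precisely those $\lambda$ for which $L^{(\mu)}\subset M_n^{(\hat{\lambda})}$.

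For the first (necessity) assertion, I would invoke the ``only if'' direction of Lemma \ref{lem:restriction_decomposition}: any such $\lambda$ must satisfy $\mu\in\mathrm{cl}^{(f)}_N(\lambda)$, i.e.\ $c^{\mu}_{\nu\lambda}\neq 0$ for some even $\nu$ with $|\nu|=2f$. Translated via the Littlewood-Richardson equivalence \eqref{eq:definition_LR_1}--\eqref{eq:definition_LR_2} and the reverse sliding procedure discussed before the skew-shape set $\mu\slashdiv\nu$, this condition is exactly $\lambda\in\mu\slashdiv\nu$ for some even $\nu$. Combined with $\lambda\in\Lambda_{n-2f,N}$ (forced because $M_n^{(\hat{\lambda})}\subset V^{\otimes n}$), this is the defining condition for $\lambda\in\Lambda^{(\mu)}_N$. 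Hence every non-zero eigenvalue has the form $(\varepsilon N-1)f+c(\mu\backslash\sigma)$ with $\sigma\in\Lambda^{(\mu)}_N$, and Lemma \ref{lem:KerA} forces it to lie in $\varepsilon\mathbb{N}$, i.e.\ in $\mathrm{spec}^{\times}_{\mu}(A_n)$ as defined in \eqref{eq:spec_reduced}.

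For the second (sufficiency) assertion, I would use the ``if'' direction of Lemma \ref{lem:restriction_decomposition}, which is precisely available under either hypothesis (i) $\mu\in\Lambda_{n,N}$ or (ii) $B_n(\varepsilon N)$ semisimple. In either case, for every $\sigma\in\Lambda^{(\mu)}_N$ there is an even $\nu$ with $c^{\mu}_{\nu\sigma}\neq 0$, so $\mu\in\mathrm{cl}^{(f)}_N(\sigma)$ and hence $L^{(\mu)}\subset M_n^{(\hat{\sigma})}$. Lemma \ref{lem:D_block_diagonal} then produces the eigenvalue $(\varepsilon N-1)f+c(\mu\backslash\sigma)$ on $V^{(\hat{\mu})}\otimes L^{(\mu)}$; intersecting with $\varepsilon\mathbb{N}$ in the definition of $\mathrm{spec}^{\times}_{\mu}(A_n)$ discards only the zero eigenvalue (already excluded) and any auxiliary scalars which by Lemma \ref{lem:KerA} cannot be eigenvalues of $A_n$ on the tensor-product module anyway.

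The argument is essentially a bookkeeping restriction of Proposition \ref{prop:spectre_A} from the full tensor product to a single $GL(N)$-module. I do not expect a genuine obstacle; the only delicate point is the dictionary between the two equivalent descriptions of admissible pairs $(\lambda,\mu)$ --- namely $\mu\in\mathrm{cl}^{(f)}_N(\lambda)$ (used in Lemma \ref{lem:restriction_decomposition}) versus $\lambda\in\Lambda^{(\mu)}_N$ (used in \eqref{eq:spec_reduced}) --- which is provided by the jeu-de-taquin reverse sliding discussion of Section \ref{sec:Brauer_algebra}, together with the symmetry $c^{\mu}_{\lambda\nu}=c^{\mu}_{\nu\lambda}$ of Littlewood-Richardson coefficients.
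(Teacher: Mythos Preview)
Your proposal is correct and follows precisely the route the paper indicates: the paper merely states that Proposition~\ref{prop:spectre_A_red} ``is a restricted version of Proposition~\ref{prop:spectre_A} and is proven along the same lines,'' and your argument is exactly that restriction --- localising to the $L^{(\mu)}$-isotypic component and invoking Lemmas~\ref{lem:KerA}, \ref{lem:restriction_decomposition}, and \ref{lem:D_block_diagonal}. One small cosmetic point: once you take $\sigma\in\Lambda^{(\mu)}_N$ (so already $\sigma\in\Lambda_{n-2f,N}$), the relevant Brauer module is $M_n^{(\sigma)}$, not $M_n^{(\hat{\sigma})}$; the hat should only appear when translating from the $G(N)$-side labels in \eqref{eq:Schur-Weyl}.
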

\vskip 4 pt

Consider the reduced operator
\begin{equation}\label{eq:projector_reduced}
    P_n^{(\mu)} = \prod_{\alpha\in \mathrm{spec}^{\times}_{\mu}(A_n)} \left(1 - \frac{1}{\alpha}\,A_n\right)\,, \quad \text{and define}\quad \mathfrak{P}^{(\hat{\mu})}_n = \mathfrak{r}(P_n^{(\mu)})\,.
\end{equation}
The above formula can not be applied directly in the only case $\mu = (1^n)$ where $\mathrm{spec}^{\times}_{\mu}(A_n) = \varnothing$. This corresponds to totally anti-symmetric tensors $V^{(1^n)}\subset V^{\otimes n}$ when $G(N) = O(N)$ and totally symmetric tensors $V^{(n)}\subset V^{\otimes n}$ when $G(N) = Sp(N)$, which are automatically traceless. In this particular case we set by definition $P_{n}^{(\mu)} = 1$. All in all, the property $\big(\mathfrak{P}_{n}^{(\hat{\mu})}\big)^{*} = \mathfrak{P}_{n}^{(\hat{\mu})}$ (for all $\mu \in \Sigma_{n,N}$) is manifest. The whole paragraph is summarised by the following result, which reflects the commonly utilised fact that the two operations -- specific symmetrization of indices of a tensor and subtracting traces -- can be performed separately and in any order relatively to one another.
\begin{theorem}\label{thm:reduced_projector}
    Let $V^{(\mu)}\subset V^{\otimes n}$ be a simple $GL(N)$-module (with $\mu\in \hat{\Sigma}_{n,N}$). Upon restriction to $V^{(\mu)}$ one has $\mathfrak{P}_n^{(\mu)}\big|_{V^{(\mu)}} = \mathfrak{P}_n\big|_{V^{(\mu)}}$. In particular, for a primitive idempotent $I\in\mathbb{C}\Sn{n}$ such that $\mathfrak{r}(I)V^{\otimes n} \cong V^{(\mu)}$, the operator
    \begin{equation}
        \mathfrak{I}_n^{\prime(\mu)} = \mathfrak{P}_n \mathfrak{r}(I) = \mathfrak{P}_n^{(\mu)} \mathfrak{r}(I)\quad \text{(equivalently,}\;\; \mathfrak{I}_n^{\prime(\mu)} =  \mathfrak{r}(I) \mathfrak{P}_n = \mathfrak{r}(I) \mathfrak{P}_n^{(\mu)}\text{)}
    \end{equation}
    projects $V^{\otimes n}$ onto a simple module isomorphic to $D^{(\mu)}$ when $\mu \in \hat{\Lambda}_{n,N}$ ({\it i.e.} $\hat{\mu}$ is Littlewood-admissible), or annihilates it otherwise. In addition, if $I^{*} = I$, then $\big(\mathfrak{I}_n^{\prime(\mu)}\big)^{*} = \mathfrak{I}_n^{\prime(\mu)}$.
\end{theorem}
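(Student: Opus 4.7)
The plan is to reduce the proof to the spectral behavior of $A_n$ on the $GL(N)$-isotypic component $V^{(\mu)}\otimes L^{(\hat{\mu})}$ and to exploit the observation that the factors $(1-\alpha^{-1}A_n)$ appearing in $\mathfrak{P}_n$ but not in $\mathfrak{P}_n^{(\mu)}$ never act non-trivially on that component. First I would note that both $\mathfrak{P}_n$ and $\mathfrak{P}_n^{(\mu)}$ are polynomials in $A_n$ and therefore, being $\mathfrak{r}$-images of elements of $C_n(\varepsilon N)$, commute with $\mathfrak{r}(\mathbb{C}\Sn{n})$; through Schur--Weyl this implies that they preserve each $GL(N)$-isotypic component and in particular $V^{(\mu)}$. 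By Lemma \ref{lem:KerA}, $A_n$ is diagonalisable on $V^{\otimes n}$, so it suffices to compare the two operators on eigenvectors $v\in V^{(\mu)}$. By Proposition \ref{prop:spectre_A_red}, the eigenvalue $\beta$ of such $v$ is either $0$ or an element of $\mathrm{spec}^{\times}_{\hat{\mu}}(A_n)$ (with the appropriate notational identification between $GL(N)$- and $\mathbb{C}\Sn{n}$-labels via \eqref{eq:hat}). In the first case every factor equals $1$ and both projectors act as the identity; in the second case the factor $(1-\beta^{-1}A_n)$ is present in \emph{both} projectors and annihilates $v$. This establishes the equality $\mathfrak{P}_n^{(\mu)}\big|_{V^{(\mu)}}=\mathfrak{P}_n\big|_{V^{(\mu)}}$.

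For the second assertion, the equality $\mathfrak{P}_n\mathfrak{r}(I)=\mathfrak{r}(I)\mathfrak{P}_n$ (and similarly with $\mathfrak{P}_n^{(\mu)}$) follows directly from $P_n\in C_n(\varepsilon N)$ and $I\in\mathbb{C}\Sn{n}$, while $\mathfrak{P}_n\mathfrak{r}(I)=\mathfrak{P}_n^{(\mu)}\mathfrak{r}(I)$ is obtained by applying the first assertion to vectors in $\mathrm{Im}\,\mathfrak{r}(I)\cong V^{(\mu)}$. To identify the image of $\mathfrak{I}_n^{\prime(\mu)}$, I would invoke Theorem \ref{thm:projector}: $\mathfrak{P}_n$ projects onto the traceless subspace of $V^{\otimes n}$, hence $\mathfrak{I}_n^{\prime(\mu)}V^{\otimes n}=\mathfrak{P}_n V^{(\mu)}$ is precisely the traceless part of the simple $GL(N)$-module $V^{(\mu)}$. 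Combined with the decomposition of $V^{(\mu)}$ into simple $G(N)$-modules given by the branching rule \eqref{eq:branching}, the traceless part equals $D^{(\mu)}$ precisely when $\mu\in\hat{\Lambda}_{n,N}$ ({\it i.e.} when $D^{(\mu)}$ occurs with the Littlewood--Richardson multiplicity $\bar{c}^{\mu}_{\varnothing\mu}=1$); otherwise $V^{(\mu)}$ is built entirely out of insertions of the metric into lower-rank $G(N)$-irreducibles and has vanishing traceless part. The self-adjointness clause, under $I^{*}=I$, reduces to a one-line check: $\mathfrak{P}_n^{*}=\mathfrak{P}_n$ follows from $A_n^{*}=A_n$ via \eqref{eq:master_class}, and $\mathfrak{r}(I)^{*}=\mathfrak{r}(I)$ via \eqref{eq:flip}, whence $(\mathfrak{P}_n\mathfrak{r}(I))^{*}=\mathfrak{r}(I)\mathfrak{P}_n=\mathfrak{P}_n\mathfrak{r}(I)$, the last equality being the commutativity just established.

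The only genuine subtlety I anticipate is the compatibility of the various label conventions: $\mu\in\hat{\Sigma}_{n,N}$ in the statement of the theorem refers to a $GL(N)$-label, whereas Proposition \ref{prop:spectre_A_red} and the definition \eqref{eq:projector_reduced} of $P_n^{(\mu)}$ are phrased with $\mu\in\Sigma_{n,N}$ understood as a $\mathbb{C}\Sn{n}$-label. The required translation is nothing more than the involutive hat \eqref{eq:hat}, but it must be carried through consistently when matching eigenvalues of $A_n$ on $V^{(\mu)}\otimes L^{(\hat{\mu})}$ with the factors of the product \eqref{eq:projector_reduced}. Beyond this bookkeeping the argument is a straightforward spectral decomposition and does not require any new input beyond Proposition \ref{prop:spectre_A_red} and Theorem \ref{thm:projector}.
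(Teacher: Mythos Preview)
Your proposal is correct and aligns with what the paper leaves implicit: the paper states Theorem~\ref{thm:reduced_projector} without proof, treating it as an immediate consequence of Proposition~\ref{prop:spectre_A_red} together with the preceding discussion (that traceless projection of $V^{(\mu)}$ means projection onto $D^{(\mu)}\subset V^{(\mu)}$, with reference to \cite[Section~3.2]{Nazarov_tracelessness}). Your spectral argument---comparing the two polynomials in $A_n$ eigenvector by eigenvector via Proposition~\ref{prop:spectre_A_red}, then invoking Theorem~\ref{thm:projector} and the branching structure---is exactly the intended route, and your handling of the hat convention and the self-adjointness clause is accurate.
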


\paragraph{\underline{Remark.}}Note that Young symmetrisers are not self-adjoint with respect to $(\,\cdot\,)^{*}$. To have the latter property at hand one should take the orthogonal primitive idempotents in $\mathbb{C}\Sn{n}$ \cite{Jucys_idempotents,Murphy,Molev} (called also Young seminormal units \cite{Garsia2020} or Hermitian Young operators \cite{Keppeler:2013yla}).
\vskip 4 pt

The construction in question admits the following straightforward generalisation to the case of a direct sum of simple $GL(N)$-modules (possibly with certain multiplicities) in $V^{\otimes n}$. For $\mathcal{I}\subset \Sigma_{n,N}$ define $\mathrm{spec}^{\times}_{\mathcal{I}}(A_n) = \bigcup_{\mu\in\mathcal{I}} \mathrm{spec}^{\times}_{\mu}(A_n)$, construct
\begin{equation}
    \quad P^{(\mathcal{I})}_n = \prod_{\alpha\in \mathrm{spec}^{\times}_{\mathcal{I}}(A_n)} \left(1 - \frac{1}{\alpha}\,A_n\right)\quad \text{and set}\quad \mathfrak{P}^{(\hat{\mathcal{I}})}_n = \mathfrak{r}\big(P^{(\mathcal{I})}_n\big)
\end{equation}
(where $\hat{\mathcal{I}}$ denotes the mapping $\mu \mapsto \hat{\mu}$ applied element-wise). The following corollary is a simple consequence of Theorem \ref{thm:reduced_projector}
\begin{corollary}\label{cor:projector_reduced_sum}
    Consider $\mathcal{I} \subset \hat{\Sigma}_{n,N}$ and a $GL(N)$-module
    \begin{equation}
        V^{(\mathcal{I})} = \bigoplus_{\mu\in\mathcal{I}} \big(V^{(\mu)}\big)^{\oplus c_{\mu}} \subset V^{\otimes n}\quad (c_{\mu} \geqslant 1)\,.
    \end{equation}
    Then upon restriction to $V^{(\mathcal{I})}$ one has $\mathfrak{P}_n^{(\mathcal{I})}\big|_{V^{(\mathcal{I})}} = \mathfrak{P}_n\big|_{V^{(\mathcal{I})}}$.
\end{corollary}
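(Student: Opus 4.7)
The plan is to reduce Corollary \ref{cor:projector_reduced_sum} directly to Theorem \ref{thm:reduced_projector} by exploiting that both $\mathfrak{P}_n^{(\mathcal{I})}$ and each $\mathfrak{P}_n^{(\mu)}$ arise from polynomials in the single element $A_n$. The key structural observation is the containment
\begin{equation*}
    \mathrm{spec}^{\times}_{\mu}(A_n) \subseteq \mathrm{spec}^{\times}_{\mathcal{I}}(A_n) \qquad \text{for every } \mu \in \mathcal{I},
\end{equation*}
which is immediate from the definition $\mathrm{spec}^{\times}_{\mathcal{I}}(A_n) = \bigcup_{\mu\in\mathcal{I}}\mathrm{spec}^{\times}_{\mu}(A_n)$, and permits the factorisation
\begin{equation*}
    P_n^{(\mathcal{I})} = P_n^{(\mu)} \cdot Q_\mu, \qquad Q_\mu = \prod_{\alpha\in\mathrm{spec}^{\times}_{\mathcal{I}}(A_n)\setminus\mathrm{spec}^{\times}_{\mu}(A_n)} \left(1 - \frac{1}{\alpha}\,A_n\right),
\end{equation*}
in which all factors commute.

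First I would observe that $P_n^{(\mathcal{I})}, P_n^{(\mu)} \in C_n(\varepsilon N)$, hence commute with $\mathbb{C}\Sn{n}$. Writing $V^{(\mu)} = \mathfrak{r}(I)V^{\otimes n}$ for any primitive idempotent $I\in\mathbb{C}\Sn{n}$ realising $V^{(\mu)}$ inside $V^{\otimes n}$, this commutation gives $\mathfrak{P}_n^{(\mathcal{I})}(V^{(\mu)})\subseteq V^{(\mu)}$ and $\mathfrak{P}_n(V^{(\mu)})\subseteq V^{(\mu)}$. Consequently, checking the equality $\mathfrak{P}_n^{(\mathcal{I})}|_{V^{(\mathcal{I})}} = \mathfrak{P}_n|_{V^{(\mathcal{I})}}$ reduces by linearity to verifying $\mathfrak{P}_n^{(\mathcal{I})}(v) = \mathfrak{P}_n(v)$ for each $v\in V^{(\mu)}$ with $\mu\in\mathcal{I}$.

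Next, fix $\mu\in\mathcal{I}$ and $v\in V^{(\mu)}$. By Theorem \ref{thm:reduced_projector}, $\mathfrak{P}_n^{(\mu)}(v) = \mathfrak{P}_n(v)$ is the traceless projection of $v$, and hence lies in $\mathrm{Ker}\,A_n$ by Lemma \ref{lem:KerA}. Since $Q_\mu$ is a polynomial in $A_n$ with constant term equal to $1$, the operator $\mathfrak{r}(Q_\mu)$ acts as the identity on $\mathrm{Ker}\,A_n$. Using that $\mathfrak{P}_n^{(\mathcal{I})} = \mathfrak{r}(Q_\mu)\,\mathfrak{P}_n^{(\mu)}$, one obtains
\begin{equation*}
    \mathfrak{P}_n^{(\mathcal{I})}(v) = \mathfrak{r}(Q_\mu)\,\mathfrak{P}_n^{(\mu)}(v) = \mathfrak{P}_n^{(\mu)}(v) = \mathfrak{P}_n(v),
\end{equation*}
which closes the argument upon summation over $\mu\in\mathcal{I}$.

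The proof is essentially a bookkeeping exercise once Theorem \ref{thm:reduced_projector} is in hand; there is no genuine obstacle. The only point worth stating carefully is that the extra factors collected in $Q_\mu$ must not accidentally distort the image of $\mathfrak{P}_n^{(\mu)}|_{V^{(\mu)}}$, which is precisely what Lemma \ref{lem:KerA} rules out by identifying this image with $\mathrm{Ker}\,A_n\cap V^{(\mu)}$, on which every $Q_\mu$ acts trivially.
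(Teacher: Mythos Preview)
Your argument is correct and is precisely the kind of routine verification the paper has in mind when it states that the corollary ``is a simple consequence of Theorem \ref{thm:reduced_projector}'' without supplying a proof. The factorisation $P_n^{(\mathcal{I})} = Q_\mu\, P_n^{(\mu)}$ together with the observation that $\mathfrak{r}(Q_\mu)$ acts as the identity on $\mathrm{Ker}\,A_n$ is the natural way to spell out this consequence, and nothing more is needed.
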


In particular, as soon as a tensor product of $GL(N)$-modules $V^{(\mu_1)}, V^{(\mu_2)}$ decomposes into a direct sum with the aid of the Littlewood-Richardson rule applied to $\mu_1\LRp \mu_2$, namely
\begin{equation}\label{eq:projector_GL_tens_prod}
\def\arraystretch{0.6}
    V^{(\mu_1)}\otimes V^{(\mu_2)} \cong \bigoplus_{\begin{array}{c}
        {\scriptstyle \sigma\,\vdash\, |\mu_1| + |\mu_2|}\\
        {\scriptstyle c^{\sigma}_{\mu_1 \mu_2}\neq 0\,,\;\; \sigma^{\prime}_{1} \leqslant N}
    \end{array}}
    \big(V^{(\sigma)}\big)^{\oplus c^{\sigma}_{\mu_1 \mu_2}} \;\;\subset\;\; V^{\otimes (|\mu_1| + |\mu_2|)}\,,
\end{equation}
the reduced traceless operator is constructed as $\mathfrak{P}_{|\mu_1| + |\mu_2|}^{(\mathcal{I})}$, with the index set $\mathcal{I}$ constituted by labels $\sigma$ of the modules occurring on the right-hand-side of \eqref{eq:projector_GL_tens_prod} (see Example 8 in Appendix \ref{app:examples}).

\paragraph{Example 5 (totally symmetric $O(N)$-tensors).}
For the fixed partition $\mu =(n)$ one constructs $\mathrm{spec}^{\times}_{(n)}(A_n)$ for the skew-shape diagrams $\mu\backslash\lambda$ for all $\lambda \in (n)\slashdiv (2f)$, $f = 1,\dots,\lfloor{\frac{n}{2}\rfloor}$. This leads to
\begin{displaymath}
\ytableausetup{mathmode,boxsize=1.3em,centertableaux}
\mu\backslash\lambda=
\begin{ytableau}{\scriptstyle}
      \times & \none[\scriptstyle{\cdots}]& \times & {\scriptstyle f} &\none[\scriptstyle{\cdots}] & {\scriptstyle n-1}
\end{ytableau}
\quad \Rightarrow \quad \mathrm{spec}_{(n)}^{\times}(A_n) = \left\{\big(N + 2\ (n - f - 1)\big)\,f\;:\; f = 1,\dots, \lfloor \tfrac{n}{2}\rfloor\right\}\,.
\end{displaymath}
Hence the reduced projector \eqref{eq:projector_reduced} takes the form
\begin{equation}\label{eq:projector_symmetric0}
    P_n^{(n)} = \displaystyle{\prod_{f=1}^{\lfloor \tfrac{n}{2}\rfloor}} \left(1 - \frac{A_n}{\big(N+2\ (n-f-1)\big)f}\right)\,.
\end{equation}
In the next section we will rewrite the above expression in terms of the Lie algebra $\mathfrak{sl}(2)$ which arises in the context of Howe duality for symmetric tensors. This will allow us to rewrite the expression in the expanded form.

\paragraph{Example 6 (maximally-antisymmetric hook $O(N)$-tensors).}
For the partition $\mu=(2,1^{n-2})$ one constructs $\mathrm{spec}^{\times}_{(2,1^{n-2})}(A_n)$ for the skew shape diagrams $\mu\backslash\lambda$ with $\lambda \in (2,1^{n-2})\slashdiv (2)$, which leads to the only possibility: \\
\begin{equation}
    \ytableausetup{mathmode,boxsize=1.3em,centertableaux}
\mu\backslash\lambda=
\begin{ytableau}{\scriptstyle}
      \times & {\scriptstyle 1} \\
      \times\\
      \none[\svdots]\\
      \times\\
      {\scriptstyle 2-n}\\
\end{ytableau} 
\quad \Rightarrow \quad \mathrm{spec}_{(2,1^{n-2})}^{\times}(A_n) = \big\{N - n + 2 \, \big\}\,,\;\; \text{so} \quad P_n^{(2,1^{n-2})}=1- \dfrac{A_n}{N-n+2}\;.
\end{equation}
Note that in order for the $GL(N)$-module in question to be present in $V^{\otimes n}$, one assumes $N \geqslant n-1$, so the denominator is non-singular. Moreover, in accordance with Proposition \ref{prop:spectre_A} the expression in the denominator is always positive. The case of a generic hook $(m,1^{k})$ is considered in Appendix \ref{app:examples}.
\vskip 4 pt

\paragraph{Quasi-additive form of the universal projector $\mathfrak{P}_n$.}From the point of view of applications, the most convenient form of the universal traceless projector would be a sum of averages ({\it i.e.} elements of $C_{n}(\varepsilon N)$, recall the comment at the end of Section \ref{sec:conjugacy_classes}). To this end, along with theoretical transparency of the factorised form \eqref{eq:projector} and its convenience for restrictions to $GL(N)$-modules, the other side of the coin is that regarding applications the expression \eqref{eq:projector} is quite far from being optimal -- first of all due to the necessity to express the powers $(A_n)^p$ as combinations of averages. The latter problem is partially resolved with the aid of reduced traceless projectors described in Theorem \ref{thm:reduced_projector}. Our goal consists in summing up the latter and reconstruct the universal traceless projector as a polynomial in $A_n$ of a smaller degree than that of \eqref{eq:projector}.
\vskip 4 pt

First, we note that the traceless subspace of $V^{(\mu)}\subset V^{\otimes n}$ is non-zero only if $\mu \in \hat{\Lambda}_{n,N}$ ($\hat{\mu}$ is Littlewood-admissible). By restricting our attention to the latter set of $GL(N)$-modules, we construct the following element in $B_{n}(\varepsilon N)$. Let $z^{(\mu)}$ denote the central Young symmetriser associated to a simple $\mathbb{C}\Sn{n}$-module indexed by $\mu\vdash n$ \cite{centralYoung}, then set
\begin{equation}\label{eq:projector_semi_sum}
    \tilde{P}_n = \sum_{\mu \in \Lambda_{n,N}}
    P^{(\mu)}_n z^{(\mu)}\,.
\end{equation}
Note that due to Proposition \ref{prop:spectre_A_red}, each $P^{(\mu)}_n$ in the above formula is constructed with the minimal possible number of factors since each element in $\mathrm{spec}^{\times}_{\mu}(A_n)$ is an eigenvalue of $A_n$. Also, one has the property $(\tilde{P}_n)^{*} = \tilde{P}_n$. Indeed, $(z^{(\mu)})^{*} = z^{(\mu)}$ (see \eqref{eq:ns-flip} and the comment below), and each $P^{(\mu)}_n\in C_{n}(\varepsilon N)$. The $\mathfrak{r}$-image of \eqref{eq:projector_semi_sum} gives the sought resummation of the reduced projectors of Theorem \ref{thm:reduced_projector}.
\begin{corollary}\label{cor:projector_semi-sum}
    The universal traceless projector admits the following form:
    \begin{equation}\label{eq:projector_quasi-additive}
        \mathfrak{P}_n = \mathfrak{r}\big(\tilde{P}_n\big)\,.
    \end{equation}
\end{corollary}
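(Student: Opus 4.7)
The plan is to test the equality $\mathfrak{P}_n = \mathfrak{r}(\tilde{P}_n)$ after decomposing $V^{\otimes n}$ via Schur-Weyl duality into isotypic components, then exploiting the central character of the Young symmetrisers to reduce the claim to Theorem \ref{thm:reduced_projector}. Concretely, I would begin by recalling that $\{z^{(\mu)}\,:\,\mu\vdash n\}$ is a complete system of orthogonal central idempotents in $\mathbb{C}\Sn{n}$, so $\mathfrak{r}(z^{(\mu)})$ is the projector onto the $L^{(\mu)}$-isotypic component of $V^{\otimes n}$ under the $\mathbb{C}\Sn{n}$-action; by the decomposition \eqref{eq:Schur-Weyl} this component equals $V^{(\hat\mu)}\otimes L^{(\mu)}$ (and it vanishes exactly when $\mu\notin\Sigma_{n,N}$). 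In particular $\sum_{\mu\in\Sigma_{n,N}}\mathfrak{r}(z^{(\mu)})=\mathrm{id}_{V^{\otimes n}}$, which is the key decomposition of unity I want to insert.

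The next step is the identification, for every $\mu\in\Lambda_{n,N}$, of $\mathfrak{r}(P_n^{(\mu)})\,\mathfrak{r}(z^{(\mu)})$ with $\mathfrak{P}_n\,\mathfrak{r}(z^{(\mu)})$. This is exactly what Theorem \ref{thm:reduced_projector} asserts: on the $GL(N)$-module $V^{(\hat\mu)}$ (equivalently, on each irreducible $V^{(\hat\mu)}$-component inside the isotypic $V^{(\hat\mu)}\otimes L^{(\mu)}$) the reduced projector $\mathfrak{P}_n^{(\hat\mu)}=\mathfrak{r}(P_n^{(\mu)})$ coincides with $\mathfrak{P}_n$. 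Since $P_n^{(\mu)}\in C_n(\varepsilon N)$ and $\mathfrak{P}_n\in\mathfrak{B}_n(N)$ both commute with the $\mathbb{C}\Sn{n}$-action, the identification extends from a single copy of $V^{(\hat\mu)}$ to the whole isotypic component $\mathrm{Im}\,\mathfrak{r}(z^{(\mu)})$. Summing over $\mu\in\Lambda_{n,N}$ then yields
\begin{equation*}
\mathfrak{r}(\tilde{P}_n)=\sum_{\mu\in\Lambda_{n,N}}\mathfrak{r}(P_n^{(\mu)})\,\mathfrak{r}(z^{(\mu)})=\sum_{\mu\in\Lambda_{n,N}}\mathfrak{P}_n\,\mathfrak{r}(z^{(\mu)})=\mathfrak{P}_n\sum_{\mu\in\Lambda_{n,N}}\mathfrak{r}(z^{(\mu)}).
\end{equation*}

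It remains to account for the labels in $\Sigma_{n,N}\setminus\Lambda_{n,N}$, where the sum defining $\tilde P_n$ is truncated. For $\mu$ in this set, $\hat\mu\notin\hat\Lambda_{n,N}$ is not Littlewood-admissible, and the last clause of Theorem \ref{thm:reduced_projector} (or, equivalently, the structure of the traceless subspace in \eqref{eq:Schur-Weyl}) gives $\mathfrak{P}_n\,\mathfrak{r}(z^{(\mu)})=0$: there is no simple $G(N)$-submodule $D^{(\hat\mu)}$ inside $V^{(\hat\mu)}$. Therefore one may freely enlarge the range of summation from $\Lambda_{n,N}$ to $\Sigma_{n,N}$, and combining with $\sum_{\mu\in\Sigma_{n,N}}\mathfrak{r}(z^{(\mu)})=\mathrm{id}$ concludes $\mathfrak{r}(\tilde P_n)=\mathfrak{P}_n$. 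The mildly delicate step, which I would check carefully, is precisely this last vanishing on non-Littlewood-admissible isotypic components — the cleanest justification is to note that traceless tensors in $V^{\otimes n}$ lie in $\bigoplus_{\lambda\in\hat\Lambda_{n,N}}D^{(\lambda)}\otimes M_n^{(\hat\lambda)}$, which has trivial intersection with any $V^{(\hat\mu)}$-isotypic component for $\hat\mu\notin\hat\Lambda_{n,N}$, so $\mathfrak{P}_n$ must kill the latter by definition \eqref{eq:projector_properties}.
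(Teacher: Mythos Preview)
Your proof is correct and follows essentially the same route as the paper's: insert the decomposition of unity $\sum_{\mu} z^{(\mu)}$, invoke Theorem \ref{thm:reduced_projector} to replace $\mathfrak{P}_n\,\mathfrak{r}(z^{(\mu)})$ by $\mathfrak{P}_n^{(\hat\mu)}\,\mathfrak{r}(z^{(\mu)})$ on each isotypic block, and then drop the terms with $\mu\notin\Lambda_{n,N}$ because the traceless part of $V^{(\hat\mu)}$ is trivial there. Your write-up is in fact slightly more explicit than the paper's about why the non-Littlewood-admissible terms vanish, but the argument is the same.
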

\begin{proof}
    Central Young symmetrisers form a decomposition of unity:
    \begin{equation}\label{eq:projector_semi-sum_1}
        1 = \sum_{\mu\vdash n} z^{(\mu)}\quad \Rightarrow\quad P_n = \sum_{\mu\vdash n} P_n z^{(\mu)}\,.
    \end{equation}
    Each $z^{(\mu)}$ is the sum of orthogonal idempotents whose left ideal in $\mathbb{C}\Sn{n}$ is isomorphic to $L^{(\mu)}$. So, by Theorem \ref{thm:reduced_projector}, $\mathfrak{P}_n \mathfrak{r}\big(z^{(\mu)}\big) = \mathfrak{P}^{(\hat{\mu})}_n \mathfrak{r}\big(z^{(\mu)}\big)$. For $V^{(\hat{\mu})}$ such that $\mu\notin \Lambda_{n,N}$, $D^{(\hat{\mu})}$ is not present in the decomposition of the latter upon restriction to $G(N)$, and hence the corresponding traceless projection vanishes identically, so one has $\mathfrak{P}^{(\hat{\mu})}_n \mathfrak{r}\big(z^{(\mu)}\big) = 0$ in this case. Therefore, from \eqref{eq:projector_semi-sum_1} on obtains
    \begin{equation}
        \mathfrak{P}_n = \mathfrak{r}(P_n) = \sum_{\mu\in\Lambda_{n,N}} \mathfrak{P}^{(\hat{\mu})}_n \,\mathfrak{r}\big(z^{(\mu)}\big) = \mathfrak{r}\big(\tilde{P}_n\big)\,.
    \end{equation}
\end{proof}
\paragraph{\underline{Remark.}} The examples of traceless projectors given in the companion Mathematica notebook utilise the expression \eqref{eq:projector_semi_sum}, which is the one implemented in \cite{BrauerAlgebraPackage}.

\subsection{Tracelessness in the context of Howe duality}\label{sec:Howe_duality}

In the context of the group $G(N)$ acting on tensors, aside from the Schur-Weyl-type duality which concerns representation theory of Brauer algebras, there is another well-known duality which is important through its application in field theories -- namely, the Howe duality \cite{Howe} (which is often referred to as  ``oscillator realisation'' in the physics literature, see, {\it e.g.}, \cite{Metsaev_mixed_sym_AdS,Boul-Iaz-Sund_mixed2,Alk_Grig_Tip} for applications). The latter relates representations of the classical group $G$ to those of an algebra $\mathfrak{A}$ via a bimodule where the actions of the two mutually centralise each other. In particular, a decomposition of a $(G,\mathfrak{A})$-bimodule reminiscent to \eqref{eq:Schur-Weyl} takes place, with finite-dimensional simple $G$-modules in the left slot and a simple $\mathfrak{A}$-modules in the right slot. The main difference with the Schur-Weyl-type dualities consists in considering the {\it infinite-dimensional subspace in the tensor algebra} $T(V)$ where the actions of $G$ and $\mathfrak{A}$ meet (instead of the finite-dimensional component $V^{\otimes n}$). The algebra $\mathfrak{A}$ is generated by transformations which form a Lie algebra $\mathfrak{d}(r)$ for some $r\geqslant 1$: $\mathfrak{d}(r) = \mathfrak{sp}(2r)$ for $G(N) = O(N)$ and $\mathfrak{d}(r) = \mathfrak{o}(2r)$ for $G(N) = Sp(N)$.
\vskip 4 pt

\paragraph{Totally symmetric traceless $G(N)$-tensors.} We start by revisiting Example 5 (with $G(N) = O(N)$) which is a good starting point to introduce the main ideas. The space of totally symmetric tensors (of arbitrary rank) is isomorphic to the space $\mathbb{C}[y]$ of polynomials in $N$ variables $y = \{y_{a}\;:\; a = 1,\dots, N\}$. Rank-$n$ symmetric tensors are isomorphic to the subspace of degree-$n$ homogeneous polynomials which we denote $\mathbb{C}[y]_n$:
\begin{equation*}
    T = e_{a_1}\otimes\dots\otimes e_{a_n} t^{a_1\dots a_n} \in \mathrm{sym} (V^{\otimes n})\quad \mapsto \quad T(y) = t^{a_1\dots a_n}\, y_{a_1}\dots y_{a_n}\in \mathbb{C}[y]_n
\end{equation*}
(abusing notation, we denote the tensor and the corresponding polynomial by the same letter). The trace is the same for any pair of indices, so one has $\mathrm{tr}^{(g)}T(y) = g_{bc}t^{bc\,a_3\dots a_n}\, y_{a_3}\dots y_{a_n}$, which is conveniently expressed via the following second-order differential operator:
\begin{equation*}
    \text{for}\quad \mathsf{e}_{+} = \frac{1}{2}\,g_{ab}\frac{\partial}{\partial y_a}\frac{\partial}{\partial y_b}\quad \text{one has}\quad\mathsf{e}_{+}\,T(y) = \tfrac{n(n-1)}{2}\,\mathrm{tr}^{(g)}T(y)\,.
\end{equation*}
If one additionally considers the quadratic operator $\mathsf{e}_{-} = -\tfrac{1}{2}\,g^{ab}\, y_a y_b$, then for the action of $\mathsf{e}_{-}\mathsf{e}_{+}$ on the polynomials one recognizes the action of $A_n$ on tensors:
\begin{equation}\label{eq:A_to_ee}
    A_n T \quad\mapsto \quad -2 \,\mathsf{e}_{-}\mathsf{e}_{+}\, T(y)\,.
\end{equation}
The commutator of the two operators $\mathsf{e}_{-},\mathsf{e}_{+}$ gives $\mathsf{h} = -\big(\tfrac{N}{2} + y_a\frac{\partial}{\partial y_a}\big)$, and all together they form the Lie algebra $\mathfrak{sl}(2)$:
\begin{equation}\label{eq:commutators_sl}
    \left[\mathsf{e}_{+},\mathsf{e}_{-}\right] = \mathsf{h}\,,\quad \left[\mathsf{h},\mathsf{e}_{\pm}\right] = \pm 2\,\mathsf{e}_{\pm}\,.
\end{equation}

As far as symmetric tensors form the irreducible $GL(N)$-module $\mathbb{C}[y]_n \cong V^{(n)}$, we make use of the reduced projector $P_{n}^{(n)}$ \eqref{eq:projector_symmetric0}. First, note that $\mathsf{h}\,T(y) = -(\tfrac{N}{2} + n)\, T(y)$ on tensors from $\mathbb{C}[y]_n$. Next, substitution \eqref{eq:A_to_ee} leads to the following operator acting on polynomials:
\begin{equation}\label{eq:projector_symmetric}
    P_n^{(n)} \quad\mapsto\quad P_{\mathfrak{sl}(2)} = \prod_{f\geqslant 1} \left(1 - \frac{\mathsf{e}_{-}\mathsf{e}_{+}}{(\mathsf{h} + f + 1)f}\right)\, ,
\end{equation}
which coincides with the form of {\it extremal projector} for $\mathfrak{sl}(2)$ presented in \textsection 7 of Chapter 3 in \cite{Zhelobenko_rep_reductive_Lie_alg} (see also \cite{Tolstoy_2004,Neretin_2009} for a review). The infinite product acts on $\mathbb{C}[y]$ by consecutive application of factors and truncates at $f_{\mathrm{max}} = \lfloor\tfrac{n}{2}\rfloor$ when restricted to $\mathbb{C}[y]_n$. As a result, traceless rank-$n$ tensors can be viewed as the subspace of highest-weight vectors of weight $-\big(\frac{N}{2} + n\big)$ in the $\mathfrak{sl}(2)$-module $\mathbb{C}[y]$.
\vskip 4 pt

The extremal projector $P_{\mathfrak{sl}(2)}$ admits the following (equivalent) additive form
\begin{equation}\label{eq:projector_symmetric_sl}
    P_{\mathfrak{sl}(2)} = P_{+}(c) = \sum_{f \geqslant 0} \frac{(-1)^f}{f!}\dfrac{1}{\displaystyle{\prod_{j=1}^{f}(\mathsf{h}+c+j)}}\,\mathsf{e}_{-}^f\mathsf{e}_{+}^f\,,\quad\text{with}\;\; c = 1\,.
\end{equation}
The products $\mathsf{e}_{-}^f\mathsf{e}_{+}^f$ can be directly mapped to elements $A_n^{(f)}$ ($f = 1,\dots,\lfloor{\frac{n}{2}\rfloor}$) which generalise $A_n = A_n^{(1)}$ to the case of $f$ arcs:
\begin{equation}\label{eq:class_A_f}
    A_{n}^{(f)} = \frac{1}{2^{f} f!(n-2f)!}\,\gamma_{d_1 d_3 \dots d_{2f-1}} \in C_{n}(\delta)\cap J^{(f)}\quad \Rightarrow\quad A_n^{(f)} \mapsto -\frac{1}{2^{f-2} f!}\,\mathsf{e}_{-}^f\mathsf{e}_{+}^f\,.
\end{equation}
Translated to $B_{n}(N)$, the expression \eqref{eq:projector_symmetric_sl} gives the expanded form of the traceless projector $P_n^{(n)}$ bypassing direct computations of the powers $(A_n)^{p}$ and restricting them to $V^{(n)}$. Rederivation of \eqref{eq:projector_symmetric_sl} in other frameworks can be found in the literature \cite{Rejon_Barrera_2016,CHPT_pseudo-Howe}.
\vskip 4 pt

Note that the same construction applies in the case when the metric is skew-symmetric, with $G(N) = Sp(N)$. Symmetric tensors are automatically traceless in this case which is reflected in trivialisation of the trace operator $\mathsf{e}_{+} = 0$, as well as $\mathsf{e}_{-} = 0$. The only non-trivial operator $\mathsf{h}$ constitutes the Lie algebra $\mathfrak{o}(2)$.

\paragraph{Mixed-symmetry $G(N)$-tensors.} In relation to the above example, let us mention the well-known way of realizing tensorial mixed-symmetry $G(N)$-modules $D^{(\rho)} \subset V^{\otimes n}$ via homogeneous polynomials and differential operators acting on them (as before, we start with the case $G(N) = O(N)$). Consider the space of polynomials $\mathbb{C}[\boldsymbol{y}]$ in the variables $\boldsymbol{y} = \{y_a^{i}\;:\;a = 1,\dots, N\,,\;i = 1,\dots, r\}$. For the rank-$n$ tensors of the symmetry type $\rho = (\rho_1,\dots,\rho_r)$ (with $|\rho| = n$) one considers the subspace $\mathbb{C}[\boldsymbol{y}]_{\rho}$ of degree-$n$ homogeneous polynomials which are also degree-$\rho_i$ homogeneous in each subset of variables $\{y_a^{i}\;:\; a = 1,\dots,N\}$ (for each fixed $i$). In other words, the elements of $\mathbb{C}[\boldsymbol{y}]_{\rho}$ are rank-$n$ tensors with $r$ enumerated groups of symmetrised indices, each $i$th group carrying $\rho_i$ indices.
\vskip 4 pt

From the polynomial variables and their derivatives one constructs the following set of differential operators which constitute the Lie algebra $\mathfrak{sp}(2r)$:
\begin{equation}\label{eq:Howe_pair}
    \mathsf{K}^{ij} = -\frac{1}{1+\delta_{ij}}\, g^{ab}\,y_{a}^{i} y_{b}^{j}\,,\;\; \mathsf{P}_{ij} = \frac{1}{1+\delta_{ij}}\, g_{ab}\,\frac{\partial}{\partial y_{a}^{i}} \frac{\partial}{\partial y_{b}^{j}}\,,\;\; \mathsf{H}^{i}{}_{j} = -\big(\dfrac{N}{2}\delta^{i}_{j}+y_{a}^{i} \frac{\partial}{\partial y_{a}^{j}}\big)\,.
\end{equation}
The operators $\mathsf{H}^{i}{}_{j}$ form the subalgebra $\mathfrak{gl}(r) \cong \mathfrak{e}\oplus \mathfrak{sl}(r)\subset \mathfrak{sp}(2r)$, where the center $\mathfrak{e}$ is spanned by the multiples of the Euler operator $\sum_{i=1}^{r}\mathsf{H}^{i}{}_{i}$. When $r=1$ one recognises the above example of symmetric tensors for $\mathfrak{sl}(2) \cong \mathfrak{sp}(2)$. The algebra \eqref{eq:Howe_pair} centralises the action of $O(N)$ on $\mathbb{C}[\boldsymbol{y}]$ and is well-known in the context of Howe duality \cite{Howe}. The polynomials $T(\boldsymbol{y})\in \mathbb{C}[\boldsymbol{y}]_{\rho}$ which constitute the irreducible $O(N)$-module $D^{(\rho)}$ satisfy the highest-weight conditions for the dual algebra $\mathfrak{sp}(2r)$:
\begin{align}
    \mathsf{H}^{i}{}_{i}\, T(\boldsymbol{y}) & = -\big(\frac{N}{2} + \rho_i\big)\,T(\boldsymbol{y})\;\;\text{(no summation over $i$)}\,, \label{eq:HoweDuality_Cartan}\\
    \mathsf{H}^{i}{}_{j}\,T(\boldsymbol{y}) & = 0\;\;\text{(for all $i<j$)}\,, \label{eq:HoweDuality_gl}\\
    \mathsf{P}_{ij}\,T(\boldsymbol{y}) & = 0\;\;\text{(for all $i\leqslant j$)}\,. \label{eq:HoweDuality_trace}
\end{align}
The algebra of the above operators is a Borel subalgebra $\mathfrak{h}\oplus \mathfrak{g}_{+}\subset \mathfrak{sp}(2r)$: homogeneity degrees $\rho_i$ enter the eigenvalues of the Cartan operators $\mathsf{H}^{i}{}_{i}\in \mathfrak{h}$, while the rest of the constraints represent annihilation of the highest-weight vectors by the positive root operators from $\mathfrak{g}_{+}$. In particular, $\mathfrak{g}_{+} = \check{\mathfrak{g}}_{+}\oplus \mathfrak{t}$: the subalgebra $\check{\mathfrak{g}}_{+}$ is constituted by the operators entering \eqref{eq:HoweDuality_gl} (which are positive root operators in $\mathfrak{sl}(r)\subset \mathfrak{gl}(r)$), while $\mathfrak{t}$ is the abelian ideal in $\mathfrak{g}_{+}$ constituted  by the trace operators in \eqref{eq:HoweDuality_trace}. From the point of view of tensor components, the constraints \eqref{eq:HoweDuality_trace} simply mean that the tensor is traceless with respect to any pair of indices. The constraints \eqref{eq:HoweDuality_gl} imply that symmetrisation of indices in the $i$th group with any index in the $j$th group is zero whenever $i<j$. The latter if often referred to as {\it Young property} as soon as it manifests itself for the images of the Young projector $Y_{\mathsf{t}_0(\rho)}V^{\otimes n}$, where each row in the standard Young tableau $\mathsf{t}_0(\rho)$ forms a sequence of consecutive integers.
\vskip 4 pt

The space of highest-weight vectors in any $\mathfrak{sp}(2r)$-module (and thus the solution of the constraints \eqref{eq:HoweDuality_Cartan}-\eqref{eq:HoweDuality_trace}) can be obtained via application of the corresponding {\it extremal projector}, which exists and is unique for any simple Lie algebra \cite{Zhelobenko_rep_reductive_Lie_alg} (see references therein and \cite{Tolstoy_2004} for the historical review). The projector \eqref{eq:projector_symmetric} is the simplest one of a kind. According to the general scheme, extremal projector for $\mathfrak{sp}(2r)$ is written in a form of an ordered product
\begin{equation}\label{eq:projector_extremal_sp}
    P_{\mathfrak{sp}(2r)} = \prod_{\beta \in \Delta(\mathfrak{g}_{+})}^{\longrightarrow}P_{\beta}(c_{\beta})\,.
\end{equation}
Each factor $P_{\beta}(c_{\beta})$ is given by the series \eqref{eq:projector_symmetric_sl} constructed from the operators $\{\mathsf{e}_{\pm \beta},\mathsf{h}_{\beta}\}$ forming a $\mathfrak{sl}(2)$-triple for each positive root $\beta\in \Delta(\mathfrak{g}_{+})$ (recall \eqref{eq:commutators_sl}), with
\begin{equation}\label{eq:extremal_parameters}
    c_{\beta} = \frac{1}{2} \sum_{\varphi\in\Delta(\mathfrak{g}_{+})} \varphi(\mathsf{h}_{\beta})\,.
\end{equation}
To be more specific, $\mathsf{e}_{\beta}$ (respectively, $\mathsf{e}_{-\beta}$) is either $\mathsf{H}^{i}{}_{j}$ (respectively, $\mathsf{H}^{j}{}_{i}$) with $i<j$ or $\mathsf{P}_{ij}$ (respectively, $\mathsf{K}^{ij}$) with $i\leqslant j$, and $\mathsf{h}_{\beta} = [\mathsf{e}_{\beta},\mathsf{e}_{-\beta}]$. The order of factors comes from the normal ordering of the positive roots (see, {\it e.g.}, \textsection 4 of Chapter 1 in \cite{Zhelobenko_rep_reductive_Lie_alg}), while the whole extremal projector is insensitive to a particular choice of normal ordering.
\vskip 4 pt

The case $G(N) = Sp(N)$ is considered along the same lines, with the Lie algebra $\mathfrak{o}(2r)$ realised by the same operators \eqref{eq:Howe_pair} except $\mathsf{P}_{ii} = 0$ and $\mathsf{K}^{ii} = 0$ due to the skew symmetry of the metric. With this remark at hand, root decomposition stays the same as for the case $\mathfrak{d}(r) = \mathfrak{sp}(2r)$, so simple $Sp(N)$-modules are singled out by the constraints \eqref{eq:HoweDuality_Cartan}-\eqref{eq:HoweDuality_trace}. The expression for the extremal projector \eqref{eq:projector_extremal_sp} applies also for $\mathfrak{d}(r) = \mathfrak{o}(2r)$. As a result, we can formulate the following lemma.
\begin{lemma}\label{lem:projector_Howe_factorised}
    The extremal projector $P_{\mathfrak{d}(r)}$ is divisible on the right by the extremal projector $P_{\mathfrak{sl}(r)}$:
    \begin{equation}\label{eq:projector_extremal_sp_gl}
        P_{\mathfrak{d}(r)} = P_{\mathfrak{t}}P_{\mathfrak{sl}(r)}\,,
    \end{equation}
    which reflects the possibility of consecutive implication of the constraints \eqref{eq:HoweDuality_gl} and \eqref{eq:HoweDuality_trace}.
\end{lemma}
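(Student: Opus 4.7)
The plan is to exploit the invariance of the extremal projector \eqref{eq:projector_extremal_sp} under changes of normal ordering on $\Delta(\mathfrak{g}_+)$, which is noted in the paragraph following \eqref{eq:extremal_parameters}. Specifically, I would seek a normal ordering in which every root of $\mathfrak{t}$ precedes every root of $\check{\mathfrak{g}}_+$. Under such an ordering, the product \eqref{eq:projector_extremal_sp} splits as
$$P_{\mathfrak{d}(r)}=\Bigl(\prod_{\beta\in\Delta(\mathfrak{t})}^{\longrightarrow}P_\beta(c_\beta)\Bigr)\Bigl(\prod_{\alpha\in\Delta(\check{\mathfrak{g}}_+)}^{\longrightarrow}P_\alpha(c_\alpha)\Bigr),$$
and one is led to define $P_{\mathfrak{t}}$ as the first factor and to identify the second factor with $P_{\mathfrak{sl}(r)}$.

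Existence of such an ordering would be justified using the fact that $\mathfrak{t}$ is an \emph{abelian ideal} in $\mathfrak{g}_+$, i.e.\ $[\mathfrak{t},\mathfrak{t}]=0$ and $[\check{\mathfrak{g}}_+,\mathfrak{t}]\subset\mathfrak{t}$. The normal-ordering axiom requires that whenever $\alpha,\beta,\alpha+\beta\in\Delta(\mathfrak{g}_+)$, the root $\alpha+\beta$ lies strictly between $\alpha$ and $\beta$. Only three cases arise: both of $\alpha,\beta$ in $\Delta(\check{\mathfrak{g}}_+)$, in which case $\alpha+\beta\in\Delta(\check{\mathfrak{g}}_+)$; both in $\Delta(\mathfrak{t})$, ruled out by abelianness; one in each, in which case the ideal property forces $\alpha+\beta\in\Delta(\mathfrak{t})$. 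All three cases are compatible with the convention that $\Delta(\mathfrak{t})$ precedes $\Delta(\check{\mathfrak{g}}_+)$, provided the internal ordering of $\Delta(\mathfrak{t})$ is chosen so that $\beta<\alpha+\beta$ whenever $\alpha\in\Delta(\check{\mathfrak{g}}_+)$ and $\beta,\alpha+\beta\in\Delta(\mathfrak{t})$—which is straightforward, for instance by ordering $\mathfrak{t}$ according to the natural $\check{\mathfrak{g}}_+$-grading. The internal ordering of $\Delta(\check{\mathfrak{g}}_+)$ is then taken to be any normal ordering of that subsystem.

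The remaining step is to match coefficients. The elementary factor $P_\alpha(c)$ in \eqref{eq:projector_symmetric_sl} depends on $\alpha$ only through the $\mathfrak{sl}(2)$-triple $\{\mathsf{e}_{\pm\alpha},\mathsf{h}_\alpha\}\subset\mathfrak{sl}(r)$, so for $\alpha\in\Delta(\check{\mathfrak{g}}_+)$ it is literally the same differential operator whether computed inside $\mathfrak{d}(r)$ or inside $\mathfrak{sl}(r)$, provided the constants agree. Their difference is
$$c_\alpha^{(\mathfrak{d}(r))}-c_\alpha^{(\mathfrak{sl}(r))}=\tfrac{1}{2}\sum_{\varphi\in\Delta(\mathfrak{t})}\varphi(\mathsf{h}_\alpha),$$
and the sum $\sum_{\varphi\in\Delta(\mathfrak{t})}\varphi$ of weights of $\mathfrak{t}$ (viewed as an $\mathfrak{sl}(r)$-module via the adjoint action) is a multiple of $\epsilon_1+\dots+\epsilon_r$, with proportionality constant $-(r+1)$ for $\mathfrak{d}(r)=\mathfrak{sp}(2r)$ and $-(r-1)$ for $\mathfrak{d}(r)=\mathfrak{o}(2r)$ by a direct enumeration of the weights $\pm(\epsilon_i+\epsilon_j)$ of $\mathsf{P}_{ij}$. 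Since $\mathsf{h}_\alpha$ lies in the traceless Cartan of $\mathfrak{sl}(r)$, the sum vanishes, so $c_\alpha^{(\mathfrak{d}(r))}=c_\alpha^{(\mathfrak{sl}(r))}$ and the right factor above coincides with $P_{\mathfrak{sl}(r)}$, yielding \eqref{eq:projector_extremal_sp_gl}.

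The main obstacle is the first step: establishing that a normal ordering adapted to the abelian-ideal decomposition $\mathfrak{g}_+=\check{\mathfrak{g}}_+\oplus\mathfrak{t}$ does exist. This is a structural fact about root systems in Lie algebras whose nilradical admits an abelian ideal, and although the case-by-case verification above is short, some care is needed to ensure compatibility with the internal ordering on $\Delta(\mathfrak{t})$. Once that is in hand, the coefficient-matching reduces to a routine weight count on $\mathfrak{t}$.
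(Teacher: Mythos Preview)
Your proposal is correct and follows essentially the same strategy as the paper's proof: use the subalgebra/ideal structure $\mathfrak{g}_+=\check{\mathfrak{g}}_+\oplus\mathfrak{t}$ to separate the factors, then verify that the constants $c_\alpha$ for $\alpha\in\Delta(\check{\mathfrak{g}}_+)$ reduce to their $\mathfrak{sl}(r)$-values via $\sum_{\varphi\in\Delta(\mathfrak{t})}\varphi(\mathsf{h}_\alpha)=0$. The paper handles both steps tersely (``allows one to have all the factors with the trace operators on the left'' and ``by direct computation''), whereas you spell out the normal-ordering compatibility and give a more conceptual reason for the vanishing---observing that $\sum_{\varphi\in\Delta(\mathfrak{t})}\varphi$ is a multiple of $\epsilon_1+\dots+\epsilon_r$, which kills the traceless Cartan of $\mathfrak{sl}(r)$---but the underlying argument is the same.
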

\begin{proof}
    The fact that $\check{\mathfrak{g}}_{+}\subset \mathfrak{g}_{+}$ is the subalgebra and $\mathfrak{t}\subset \mathfrak{g}_{+}$ is the ideal allows one to have all the factors with the trace operators on the left. By direct computation\footnote{Recall that for any $\mathsf{h}\in \mathfrak{h}$ the value of any root $\beta(\mathsf{h})$ is obtained via the commutator $[h,\mathsf{e}_{\beta}] = \beta(\mathsf{h})\,\mathsf{e}_{\beta}$.} one finds that for any $\beta\in \Delta(\check{\mathfrak{g}}_{+})$ there is $\sum_{\varphi\in\Delta(\mathfrak{t})}\varphi(\mathsf{h}_{\beta}) = 0$. As a result, for the roots in question, $c_{\beta}$ in \eqref{eq:extremal_parameters} are replaced by $c^{\prime}_{\beta} = \frac{1}{2} \sum_{\varphi\in\Delta(\check{\mathfrak{g}}_{+})}\varphi(\mathsf{h}_{\beta})$, so the corresponding factors contain only the $\mathfrak{sl}(r)$ data. Combining this fact with the comment below \eqref{eq:projector_extremal_sp} about the ordering of factors proves the assertion.
\end{proof}
The operator $P_{\mathfrak{sl}(r)}$, applied to the space $\mathbb{C}[\boldsymbol{y}]$, resolves the constraints \eqref{eq:HoweDuality_gl}, while the trace constraints \eqref{eq:HoweDuality_trace} are resolved by $P_{\mathfrak{t}}$. The factorised form of the projector \eqref{eq:projector_extremal_sp_gl} is reminiscent to the form of the projectors presented in \cite{Nazarov_tracelessness}, where projection to a simple $G(N)$-module $D^{(\rho)}\subset V^{\otimes n}$ is performed in two steps: {\it i)} projection onto a simple $GL(N)$-module $V^{(\rho)}$, and {\it ii)} subtraction of traces. Note that $P_{\mathfrak{t}}$ itself is not a traceless projector on $\mathbb{C}[\boldsymbol{y}]$. 
An interesting related problem would be to look for an analog of the universal traceless projector presented in Theorem \ref{thm:projector}: a projector constructed from the operators $\mathsf{K}^{ij}$, $\mathsf{P}_{ij}$ and $\mathsf{H}^{i}{}_{i}$ which maps the whole space $\mathbb{C}[\boldsymbol{y}]$ (and any irreducible $GL(N)$-module $V^{(\rho)}\subset \mathbb{C}[\boldsymbol{y}]$ in particular) onto its traceless subspace and commutes with any projector to a simple $GL(N)$-module (so, with the extremal projector $P_{\mathfrak{sl}(r)}$ in particular).  
\vskip 4 pt

As a concluding remark, for a fixed $\rho\vdash n$ one can associate particular elements of $B_{n}(\varepsilon N)$ to the operators\footnote{The action of $(\mathsf{K}^{ij})^f(\mathsf{P}_{ij})^f$, when translated to $B_{n}(\varepsilon N)$, does not correspond to an element from $C_{n}(\varepsilon N)$. So in the framework of Howe-duality, with certain tensor components being {\it a priori} symmetrised in $\mathbb{C}[\boldsymbol{y}]_{\rho}$ (with $\rho\vdash n$), controlling commutation with the whole group of permutations and, as a consequence, with projectors on simple $GL(N)$-modules, is not manifest.} $(\mathsf{K}^{ij})^f(\mathsf{P}_{ij})^f$ and rewrite the extremal projector $P_{\mathfrak{d}(r)}$ as an element in $B_{n}(\varepsilon N)$. Then the $\mathfrak{r}$-image of the latter will reproduce the projector $\mathfrak{I}^{\prime(\rho)}$ of Theorem \ref{thm:reduced_projector}, where for the idempotent in $\mathbb{C}\Sn{n}$ one takes the Young projector $Y_{\mathsf{t}_0(\hat{\rho})}$ (recall Young property, see the paragraph below \eqref{eq:HoweDuality_gl}). While the two ways of constructing the same projector are two sides of the same coin, working with the extremal projector $P_{\mathfrak{d}(r)}$ appears to be hard at the level of computations. The advantage of our approach is due to a convenient ``condensed'' way of expanding the factorised form of the traceless projector \eqref{eq:projector} in terms of the elements in $C_{n}(\varepsilon N)$, avoiding diagram-wise computations. This technique is presented in the forthcoming section.

\section{$A_n$ as a second-order differential operator on $\mathbb{C}[\mathfrak{b}(\mathcal{A})]$}\label{sec:bracelets}

The factorised formula for the traceless projector \eqref{eq:projector} is extremely useful for presentation and elucidating its properties. Nevertheless, expanding and calculating the powers of $A_n$ \eqref{eq:master_class} becomes a hard computational problem already for relatively small ranks ({\it e.g.}, $n = 5,6,\dots$) when performed at the level of diagrams constituting the conjugacy classes. We propose a technique which allows one to circumvent this problem by performing the calculations at the level of conjugacy classes, without decomposing them into single diagrams. The results of this section hold for any value $\delta\in \mathbb{C}$ of the parameter of $B_{n}(\delta)$. 
\vskip 4 pt

\subsection{Parametrisation of bases in $C_{n}(\delta)$}\label{sec:classes-bracelets_map}

To avoid diagram-wise computations we make use of the fact that $A_n$ is an element of the algebra $C_{n}(\delta)$, which implies that any power $(A_n)^p$ can be decomposed over a basis in $C_{n}(\delta)$. As it was mentioned in Section \ref{sec:conjugacy_classes}, any maximal independent set among the averages $\gamma_{b}$ (for all $b\in B_{n}(\delta)$) forms the basis in $C_{n}(\delta)$. In order to parametrise it we consider an equivalent reformulation of the one described in \cite{Shalile_Br-center} (see also \cite{KMP_central_idempotents}). Namely, the bases in $C_{n}(\delta)$ are in one-to-one correspondence with a particular subset of so-called ternary bracelets. A {\it ternary bracelet} is an equivalence class of non-empty words over the ternary alphabet $\mathcal{A} = \{\bb{n},\bb{s},\bb{p}\}$ related by cyclic permutations and inversions, {\it i.e.} can be viewed as a word with its letters written along a closed loop without specifying the direction of reading. In the sequel, we will write $[w]$ to denote a bracelet containing a representative $w$ (a word over $\mathcal{A}$). For the reverse of $w$ we will write $I(w)$, so according to the definition of bracelets one has $[w] = [I(w)]$. The length of a bracelet is defined as the length of any among its representatives, which is written as $|w|$.
\vskip 4 pt

Denote $\mathfrak{b}(\mathcal{A})$ the set of non-empty ternary bracelets with the same number of occurrences of the letters $\bb{n}$ and $\bb{s}$ (which is allowed to be $0$), with the additional requirement that for any representative, if there is a pair of letters $\bb{n}$ (respectively, $\bb{s}$), there is necessarily the letter $\bb{s}$ (respectively, $\bb{n}$) in between. For example, $[\bb{s}],[\bb{n}\bb{n}\bb{s}\bb{s}]\notin \mathfrak{b}(\mathcal{A})$ and 
\begin{equation}
    [\bb{n}\bb{s}\bb{p}\bb{p}] = [\bb{s}\bb{p}\bb{p}\bb{n}] = [\bb{n}\bb{p}\bb{p}\bb{s}] = [\bb{p}\bb{p}\bb{n}\bb{s}] = [\bb{s}\bb{n}\bb{p}\bb{p}] = [\bb{p}\bb{n}\bb{s}\bb{p}] = [\bb{p}\bb{s}\bb{n}\bb{p}] \in \mathfrak{b}(\mathcal{A})\,,\quad |\bb{n}\bb{s}\bb{p}\bb{p}| = 4\,.
\end{equation}
\vskip 4 pt
\noindent
Consider the polynomial algebra $\mathbb{C}[\mathfrak{b}(\mathcal{A})]$, {\it i.e.} the $\mathbb{C}$-span over 
\begin{equation}
    \text{the basis monomials}\quad [w_1]\dots [w_r]\quad (r\geqslant 1\,, \;\;\text{with all}\quad [w_j]\in \mathfrak{b}(\mathcal{A}))\quad\text{and}\quad 1\,.
\end{equation}
We will write $[w]^m = \underbrace{[w]\dots [w]}_{m}$ for brevity, as well as $\bb{a}^{m} = \underbrace{\bb{a}\dots\bb{a}}_{m}$ for a letter $\bb{a}\in\mathcal{A}$. The degree of a monomial is defined as the sum of the lengths of the bracelets, $\deg\left([w_{1}]\dots [w_{r}]\right) = |w_{1}| + \dots + |w_{r}|$ (where by definition one puts $\deg(1) = 0$), and we denote by $\mathbb{C}[\mathfrak{b}(\mathcal{A})]_{n} \subset \mathbb{C}[\mathfrak{b}(\mathcal{A})]$ the subset of polynomials of a given degree $n$.
\vskip 4 pt

Consider the linear map $\Phi: B_{n}(\delta) \to \mathbb{C}[\mathfrak{b}(\mathcal{A})]_n$ defined on any single diagram $b\in B_{n}(\delta)$ as follows:
\begin{itemize}
    \item[{\it 1)}] label each line of the diagram $b$ by a letter from $\mathcal{A}$: the arcs in the upper (respectively, lower) row by $\bb{n}$ (respectively, $\bb{s}$), the vertical lines by $\bb{p}$;
    \item[{\it 2)}] identify the upper nodes with the corresponding lower nodes and straighten the obtained loops, which results in a set of bracelets $[w_{1}],\dots,[w_{r}]\in \mathfrak{b}(\mathcal{A})$;
    \item[{\it 3)}] define $\Phi(b) = \frac{1}{n!}\,[w_1] \dots [w_r]$.
\end{itemize}
\noindent For example, for the diagram $b_1$ of Section \ref{sec:Brauer_diagrams} one has the above sequence of transformations 

\begin{figure}[H]
\begin{equation*}
\raisebox{-.45\height}{\includegraphics[scale=0.7]{b1.pdf}}\mapsto \raisebox{-.45\height}{\includegraphics[scale=0.8]{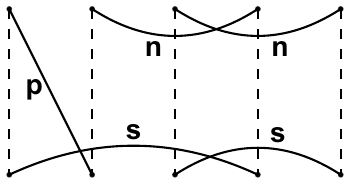}}\mapsto \Bigg\lbrace{\raisebox{-.35\height}{\includegraphics[scale=0.18]{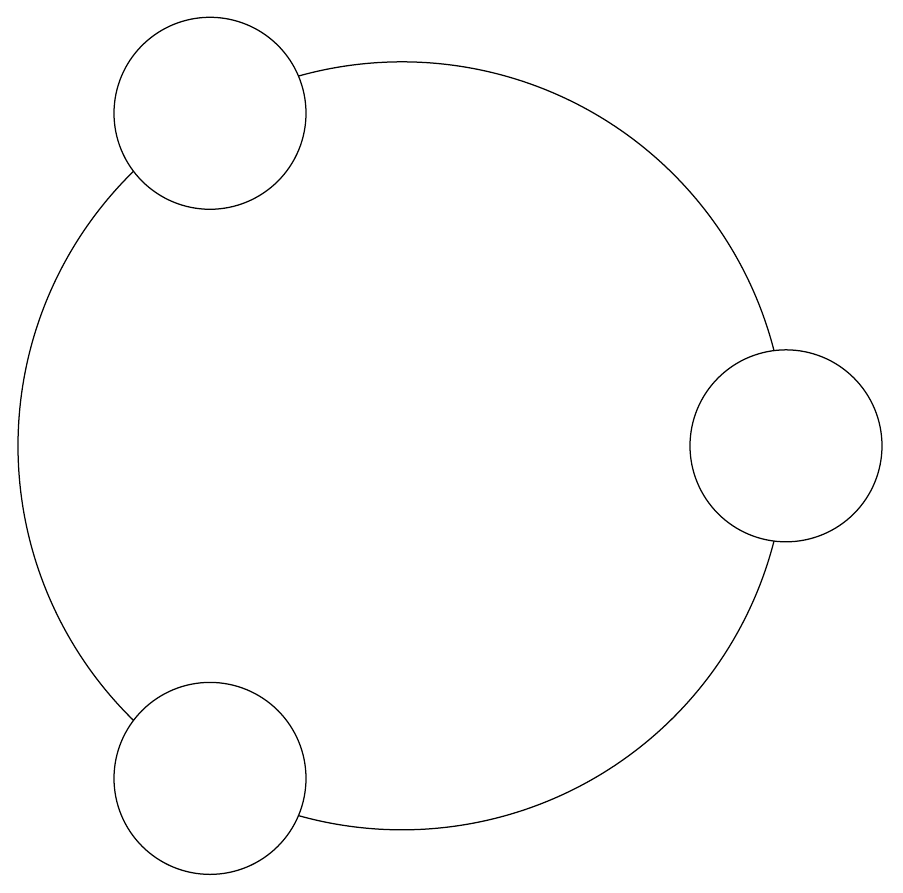}}\put(-39,22){$\bb{n}$}\put(-8,5){$\bb{s}$}\put(-39,-12){$\bb{p}$}\, , \, \raisebox{-.45\height}{\includegraphics[scale=0.18]{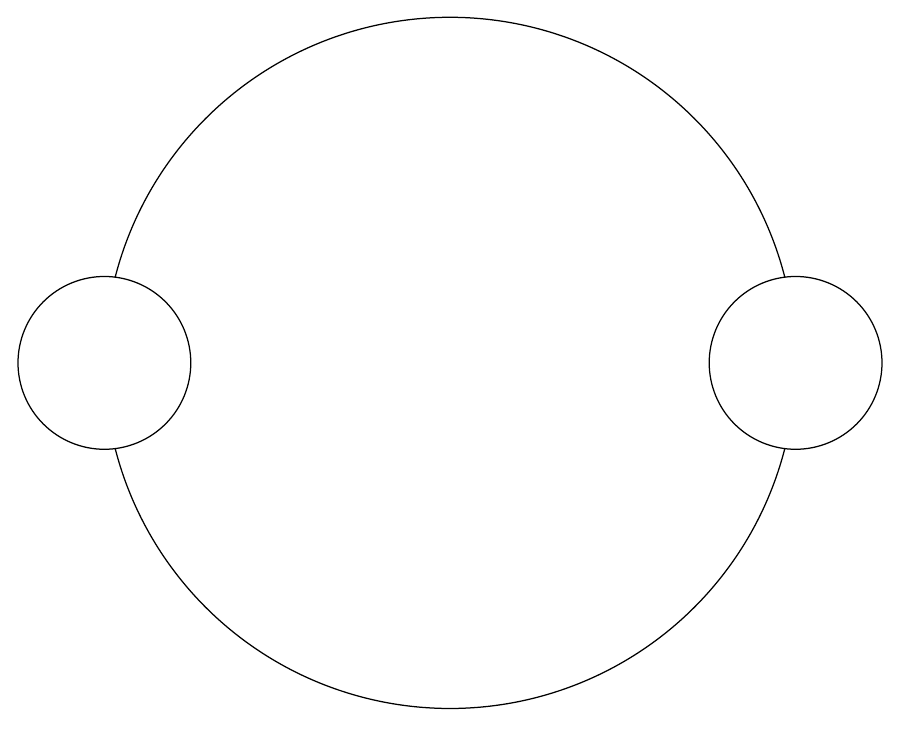}}\put(-45,0){$\bb{n}$}\put(-8,0){$\bb{s}$}\Bigg\rbrace}\mapsto \frac{1}{5!}\,[\bb{nsp}][\bb{ns}]
\end{equation*}
\end{figure}

\noindent The following result is a mere reformulation of \cite[Theorem 2.11]{Shalile_Br-center} (see also  \cite[Proposition 9]{KMP_central_idempotents} in terms of bracelets.
\begin{proposition}\label{prop:classes-bracelets}
    The map $\Phi$ is constant on the classes of conjugate elements in $B_{n}(\delta)$, so for any diagram $b\in B_{n}(\delta)$ the element $\Phi(\e_{b}) = n!\, \Phi(b)$ is a monic monomial in $[w_1]\dots [w_r]\in\mathbb{C}[\mathfrak{b}(\mathcal{A})]_n$. The restriction $\Phi\big|_{C_{n}(\delta)}$ is the isomorphism of linear spaces.
\end{proposition}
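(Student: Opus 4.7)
My plan is to split the statement into three parts and tackle them in sequence: (i) invariance of $\Phi$ under $\Sn{n}$-conjugation, (ii) a bijection between conjugacy classes of Brauer diagrams and monic monomials in $\mathbb{C}[\mathfrak{b}(\mathcal{A})]_n$, and (iii) the resulting linear isomorphism $\Phi\bigl|_{C_n(\delta)} \cong \mathbb{C}[\mathfrak{b}(\mathcal{A})]_n$.

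For (i), I would observe that conjugating a Brauer diagram $b$ by $s\in\Sn{n}$ simply relabels the $n$ vertical columns of nodes by $s$: it preserves every line's type (upper arc, lower arc, or vertical) and, once upper and lower nodes are identified, the resulting decorated graph is the same up to relabeling of vertices. Hence the multiset of bracelets extracted by the prescription defining $\Phi$ is unchanged, and $\Phi(sbs^{-1})=\Phi(b)$. By linearity this yields $\Phi(\gamma_b)=n!\,\Phi(b)=[w_1]\cdots[w_r]$, a monic monomial whose total degree equals the total number of lines of $b$, namely $n$ (each of the $2n$ node-endpoints is incident to exactly one line).

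For (ii), I would produce an inverse to $\Phi$ on conjugacy classes by explicit reconstruction. Given a monic monomial $[w_1]\cdots[w_r]\in\mathbb{C}[\mathfrak{b}(\mathcal{A})]_n$, one chooses for each bracelet a linear representative and assembles labeled cyclic graphs whose total vertex count is $n$. The merged vertices are then split into upper/lower pairs by assigning each edge-endpoint a slot $U$ or $L$, subject to the local rules: $\bb{n}$-edges occupy $U$ at both ends, $\bb{s}$-edges occupy $L$ at both ends, $\bb{p}$-edges occupy one of each, and at every vertex the two incident slots must be distinct. Labeling the resulting $n$ columns by $1,\dots,n$ produces a Brauer diagram whose $\Phi$-image is the starting monomial; different choices of bracelet representatives, orientations and column labelings produce conjugate diagrams, so the inverse is well-defined on conjugacy classes.

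Step (iii) will then be immediate: $C_n(\delta)$ has a basis $\{\gamma_b\}$ indexed by conjugacy classes (recall \eqref{eq:normalised_classes}--\eqref{eq:centraliser}), and this basis is sent bijectively by $\Phi$ to the basis of monic monomials in $\mathbb{C}[\mathfrak{b}(\mathcal{A})]_n$. The hard part will be the combinatorial verification embedded in (ii), namely that the constraints built into the definition of $\mathfrak{b}(\mathcal{A})$ — balanced counts of $\bb{n}$ and $\bb{s}$, together with the interlacing requirement between them — are \emph{exactly} those characterising cyclic words admitting a consistent slot-splitting of the kind above. A clean way to handle this is to introduce a $\{U,L\}$-valued state variable before each letter, with transition rules forced by edge type ($\bb{n}$ requires state $U$ and flips to $L$, $\bb{s}$ requires $L$ and flips to $U$, $\bb{p}$ admits either state and flips it), and to show that the defining constraints on $\mathfrak{b}(\mathcal{A})$ are equivalent to solvability of this cyclic constraint system up to the one binary choice per bracelet corresponding to the reversal symmetry.
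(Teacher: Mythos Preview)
Your three–step plan is sound, and in fact goes well beyond what the paper offers: the paper does not prove this proposition at all but simply declares it to be a reformulation of \cite[Theorem~2.11]{Shalile_Br-center} (see also \cite[Proposition~9]{KMP_central_idempotents}). So you are supplying a self-contained argument where the paper defers to the literature. Steps~(i) and~(iii) are fine as written; the identification of columns under $b\mapsto sbs^{-1}$ is exactly what makes $\Phi$ conjugation-invariant, and once the bijection in~(ii) is in hand the linear isomorphism is immediate from the description \eqref{eq:centraliser} of $C_n(\delta)$.

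There is, however, a concrete slip in the state-machine part of~(ii). With the natural reading of your state variable (the slot $U$ or $L$ from which the next edge departs), the letter $\bb{p}$ does \emph{not} flip the state: a vertical line leaves an upper node and arrives at a lower node, after which the intra-vertex passage sends you back to the upper slot, so the state is \emph{preserved}. Your stated rule ``$\bb{p}$ admits either state and flips it'' would force every admissible cyclic word to have even length (since all three letters would flip), wrongly excluding $[\bb{p}]$ and $[\bb{nsp}]$; it would also admit $[\bb{n}\bb{p}\bb{n}\bb{p}]$ (start in $U$: $\bb{n}\to L$, $\bb{p}\to U$, $\bb{n}\to L$, $\bb{p}\to U$), which has two $\bb{n}$'s and no $\bb{s}$ and hence lies outside $\mathfrak{b}(\mathcal{A})$. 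With the corrected rule ($\bb{n}:U\to L$, $\bb{s}:L\to U$, $\bb{p}$ fixes the state) the argument goes through: ignoring $\bb{p}$'s, the forced alternation of $U$ and $L$ is precisely the interlacing condition on $\bb{n}$ and $\bb{s}$, and closure of the cycle gives $|w|_{\bb{n}}=|w|_{\bb{s}}$. The two solutions of the cyclic constraint (swap $U\leftrightarrow L$ globally) correspond to the two orientations of the bracelet, so the inverse is well defined on conjugacy classes as you claim.
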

\noindent Note that for a permutation $s\in\Sn{n}$ one has $\Phi(\e_s) = [\bb{p}^{\lambda_1}] \dots [\bb{p}^{\lambda_r}]$. Without loss of generality, $\lambda_1 \geqslant \dots \geqslant \lambda_r$, so one arrives at a $r$-partition of $n$, which reflects the well-known fact that classes of conjugate elements of $\Sn{n}$ are in one-to-one correspondence with partitions of $n$.
\vskip 4 pt

The bijection described in Proposition \ref{prop:classes-bracelets} allows one to construct the basis in $C_{n}(\delta)$ labelled by monic monomials in $\mathbb{C}[\mathfrak{b}(\mathcal{A})]_n$ \cite[Corollary 2.12]{Shalile_Br-center} (see also \cite[Lemma 11]{KMP_central_idempotents}). Namely, by inverting $\Phi(\e_b) = \zeta$ we introduce the following linear map $e_{\zeta} = \e_{b} \in C_{n}(\delta)$ ($b\in B_{n}(\delta)$ is fixed modulo conjugacy equivalence).
    \begin{equation}\label{eq:basis_bracelets}
       \text{Then the set}\;\;\left\{e_{\zeta}\;:\; \zeta\in \mathbb{C}[\mathfrak{b}(\mathcal{A})]_n\;\;\text{(monic monomials)}\right\} \subset C_{n}(\delta)\;\; \text{is the sought basis.}
    \end{equation}
For example, one has the following basis in $C_{3}(N)$ parametrised by monomials $\mathfrak{b}\in\mathbb{C}[\mathfrak{b}(\mathcal{A})]_{3}$:
\begin{figure}[H]
\begin{equation}\label{eq:classes_Br_3}
    \begin{array}{rll}
        e_{[\bb{p}]^3} &= 6\, \ \raisebox{-.4\height}{\includegraphics[width=25pt,height=20pt]{Id3.pdf}} = \gamma_{1}\, 
        & \Phi(\gamma_{1}) = [\bb{p}][\bb{p}][\bb{p}]\,, \vspace{0.2cm}\\
        {e_{[\bb{pp}][\bb{p}]}} &= 2 \,\Bigl(\  \raisebox{-.4\height}{\includegraphics[width=25pt,height=20pt]{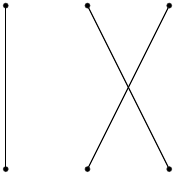}}+\raisebox{-.4\height}{\includegraphics[width=25pt,height=20pt]{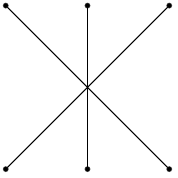}}+\raisebox{-.4\height}{\includegraphics[width=25pt,height=20pt]{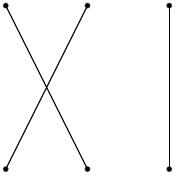}}\  \Bigr) = \e_{s_1}\,
        & \Phi(\gamma_{s_1}) = [\bb{p}^2][\bb{p}]\,,\vspace{0.2cm}\\
        e_{[\bb{ppp}]} &= 3\,\Bigl(\  \raisebox{-.4\height}{\includegraphics[width=25pt,height=20pt]{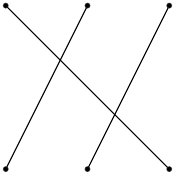}}+\raisebox{-.4\height}{\includegraphics[width=25pt,height=20pt]{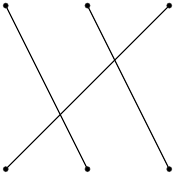}} \  \Bigr) = \e_{s_1 s_2}\,
        & \Phi(\e_{s_1 s_2}) = [\bb{p}^3]\,,\vspace{0.2cm}\\
        e_{[\bb{ns}][\bb{p}]} &= 2\,\Bigl(\  \raisebox{-.4\height}{\includegraphics[width=25pt,height=20pt]{b3_1.pdf}}+\raisebox{-.4\height}{\includegraphics[width=25pt,height=20pt]{b3_2.pdf}}+\raisebox{-.4\height}{\includegraphics[width=25pt,height=20pt]{b3_3.pdf}}\  \Bigr)\, = \e_{d_1} = 2A_3
        & \Phi(\e_{d_1}) = [\bb{n}\bb{s}][\bb{p}]\,,\vspace{0.2cm}\\
        e_{[\bb{nsp}]} &= \, \  \raisebox{-.4\height}{\includegraphics[width=25pt,height=20pt]{b3_4.pdf}}+\raisebox{-.4\height}{\includegraphics[width=25pt,height=20pt]{b3_5.pdf}}+\raisebox{-.4\height}{\includegraphics[width=25pt,height=20pt]{b3_6.pdf}}+\raisebox{-.4\height}{\includegraphics[width=25pt,height=20pt]{b3_7.pdf}}+\raisebox{-.4\height}{\includegraphics[width=25pt,height=20pt]{b3_8.pdf}}+\raisebox{-.4\height}{\includegraphics[width=25pt,height=20pt]{b3_9.pdf}} = \e_{d_1 s_2}  \, \hspace{1cm}
        & \Phi(\e_{d_1 s_2}) = [\bb{n}\bb{s}\bb{p}]\,.\\
    \end{array}
\end{equation}
\end{figure}
The basis \eqref{eq:basis_bracelets} allows us to express the left regular action of $A_n$ in $C_{n}(\delta)$ in terms of a linear operator $\Delta$ on $\mathbb{C}[\mathfrak{b}(\mathcal{A})]_n$:
\begin{equation}\label{eq:A_matrix_elements}
    A_n*e_{\zeta} = \sum_{\xi\;\text{(monic monomials in}\;\mathbb{C}[\mathfrak{b}(\mathcal{A})]_n\text{)}} e_{\xi}\,\Delta^{\xi}{}_{\zeta} = e_{\Delta(\zeta)}\,.
\end{equation}
The above formula serves as a definition which allows one to construct $\Delta$ by evaluating the products of Brauer diagrams in the left-hand-side of \eqref{eq:A_matrix_elements}. For example, by direct computation one finds the left regular action of $A_3$ on the above basis vectors of the above example, which in turn fixes the action of $\Delta$ on $\mathbb{C}[\mathfrak{b}(\mathcal{A})]_3$:
\begin{equation}\label{eq:delta_direct}
\def\arraystretch{1.3}
    \begin{array}{rlcl}
        A_3*e_{[\bb{p}]^3} & = 3\,e_{[\bb{ns}][\bb{p}]}\,, & \multirow{5}{*}{$\Rightarrow$} & \Delta\big([\bb{p}]^3\big) = 3\,[\bb{ns}][\bb{p}]\,,\\
        A_3*e_{[\bb{pp}][\bb{p}]} & = e_{[\bb{ns}][\bb{p}]} + 2\,e_{[\bb{nsp}]}\,, & & \Delta\big([\bb{pp}][\bb{p}]\big) = [\bb{ns}][\bb{p}] + 2\,[\bb{nsp}]\,,\\
        A_3*e_{[\bb{ppp}]} & = 3\,e_{[\bb{ns}][\bb{p}]}\,, & & \Delta\big([\bb{ppp}]\big) = 3\,[\bb{ns}][\bb{p}]\,,\\
        A_3*e_{[\bb{ns}][\bb{p}]} & = \delta\,e_{[\bb{ns}][\bb{p}]} + 2\,e_{[\bb{nsp}]}\,, & & \Delta\big([\bb{ns}][\bb{p}]\big) = \delta\,[\bb{ns}][\bb{p}] + 2\,[\bb{nsp}]\,,\\
        A_3*e_{[\bb{nsp}]} & = e_{[\bb{ns}][\bb{p}]} + (\delta + 1)\,e_{[\bb{nsp}]}\,, & &\Delta\big([\bb{nsp}]\big) = [\bb{ns}][\bb{p}] + (\delta + 1)\,[\bb{nsp}]\,.
    \end{array}
\end{equation}
Our next goal consists in describing the action of $\Delta$ directly in terms of bracelets, which will allow us to treat \eqref{eq:A_matrix_elements} other way around and to read off the left action of $A_n$ on $C_{n}(\delta)$ without addressing to diagram computations.

\subsection{Left regular action of $A_n$ on $C_{n}(\delta)$ via bracelets}

\paragraph{Derivation of bracelets over $\bar{\mathcal{A}}$.} Consider the polynomial algebra generated by bracelets over the extended alphabet $\bar{\mathcal{A}} = \mathcal{A}\cup \dot{\mathcal{A}} \cup \ddot{\mathcal{A}}$, where $\dot{\mathcal{A}} = \{\db{s},\db{p}\}$, $\ddot{\mathcal{A}} = \{\dd{s}\}$. Consider the (linear) derivation map $\partial$ which acts via Leibniz rule: on any monomial $[w_1]\dots [w_k]$ as
\begin{equation}
    \partial \big([w_1]\dots [w_p]\big) = \sum_{j=1}^{p} [w_1]\dots\partial\big([w_j]\big)\dots [w_p]\,,
\end{equation}
and on each bracelet $[w] = [\bb{a}_1\dots \bb{a}_{\ell}]$ ($\bb{a}_j\in \bar{\mathcal{A}}$ for all $j = 1,\dots,\ell$) as
\begin{equation}
    \partial[\bb{a}_1\dots \bb{a}_{\ell}] = \sum_{j=1}^{\ell} [\bb{a}_1\dots \partial (\bb{a}_j)\dots \bb{a}_{\ell}]\,.
\end{equation}
To fix $\partial$, we set
\begin{equation}
\def\arraystretch{1.4}
    \begin{array}{l}
        \partial(\bb{s}) = \db{s}\,,\;\;\partial(\bb{p}) = \db{p}\,,\;\; \partial(\db{s}) = \dd{s}\,,\\
        \text{and}\quad \partial(\bb{n}) = \partial(\db{p}) = \partial(\dd{s}) = 0\;\;\text{(any bracelet where $0$ occurs is put to $0$)}\,. 
    \end{array}
\end{equation}
In other words, the letters $\bb{n}$, $\db{p}$ and $\dd{s}$ are constants with respect to $\partial$.
\vskip 4 pt

Define the set $\mathfrak{b}(\bar{\mathcal{A}})\supset \mathfrak{b}(\mathcal{A})$ by extending $\mathfrak{b}(\mathcal{A})$ by all possibilities to substitute the undotted letters $\bb{s}$, $\bb{p}$ at some positions by their dotted counterparts in $\bar{\mathcal{A}}$. For example, the bracelet $[\bb{nsp}]$ gives rise to the following bracelets $[\bb{n}\db{s}\bb{p}], [\bb{n}\bb{s}\db{p}], [\bb{n}\db{s}\db{p}], [\bb{n}\dd{s}\bb{p}], [\bb{n}\dd{s}\db{p}]\in \mathfrak{b}(\bar{\mathcal{A}})$. As a result, $\mathbb{C}[\mathfrak{b}(\bar{\mathcal{A}})]$ contains all images of $\mathbb{C}[\mathfrak{b}(\mathcal{A})]$ upon consecutive application of $\partial$. The algebra $\mathbb{C}[\mathfrak{b}(\bar{\mathcal{A}})]$ is bi-graded: 
\begin{equation}\label{eq:bracelets_bi-grading}
    \mathbb{C}[\mathfrak{b}(\bar{\mathcal{A}})] = \bigoplus_{p\geqslant 0}\bigoplus_{q\leqslant p} \mathbb{C}[\mathfrak{b}(\bar{\mathcal{A}})]^{(q)}_{p}\,,
\end{equation}
where a monomial $[w_1] \dots [w_r]\in \mathbb{C}[\mathfrak{b}(\bar{\mathcal{A}})]^{(q)}_{p}$ has the total length $p$ ({\it i.e.} $|w_1|+\dots+|w_r| = p$) and carries the total amount $q$ of dots above the letters. For small $q$ the degree $(q)$ will be indicated by $q$ times the symbol $\prime$, and $\mathbb{C}[\mathfrak{b}(\bar{\mathcal{A}})]^{(0)}_{n} = \mathbb{C}[\mathfrak{b}(\mathcal{A})]_{n}$. In the sequel, omitting one of the bi-degrees of a component in \eqref{eq:bracelets_bi-grading} will imply the direct sum over all possible values of the omitted component.  For example, $[\db{p}], [\bb{n}\db{s}], [\bb{n}\db{s}][\bb{p}]\in \mathbb{C}[\mathfrak{b}(\bar{\mathcal{A})}]^{\prime}$, $[\bb{n}\dd{s}], [\bb{n}\dd{s}\bb{p}], [\bb{n}\db{s}][\db{p}\bb{p}]\in \mathbb{C}[\mathfrak{b}(\bar{\mathcal{A}})]^{\prime\prime}$ and $[\bb{n}\db{s}\bb{p}], [\bb{n}\dd{s}\bb{p}], [\bb{n}\db{s}][\db{p}] \in \mathbb{C}[\mathfrak{b}(\bar{\mathcal{A}})]_3$. The next lemma is a simple consequence of the definition of $\partial$ and the structure of $\mathfrak{b}(\bar{\mathcal{A}})$.
\begin{lemma}
    The map $\partial$ carries the bi-degree $\left({}^{1}_{0}\right)$,
\begin{equation}
    \partial: \mathbb{C}[\mathfrak{b}(\bar{\mathcal{A}})]^{(q)}_{p} \to \mathbb{C}[\mathfrak{b}(\bar{\mathcal{A}})]^{(q+1)}_{p}\,,
\end{equation}
and for any $[w]\in \mathbb{C}[\mathfrak{b}(\bar{\mathcal{A}})]^{(q)}_{p}$ there is $\partial^{p-q+1}[w] = 0$. 
\end{lemma}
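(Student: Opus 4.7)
The plan is to establish both claims by direct bookkeeping on the action of $\partial$. For (1), each elementary action of $\partial$ on a letter of $\bar{\mathcal{A}}$ either annihilates it (for $\bb{n}$, $\db{p}$, $\dd{s}$) or substitutes it by the unique letter carrying exactly one additional dot ($\bb{s}\mapsto\db{s}$, $\db{s}\mapsto\dd{s}$, $\bb{p}\mapsto\db{p}$). Extending by Leibniz over bracelets and monomials preserves the total letter count $p$ and, on every non-vanishing summand, raises the total dot count by exactly one, which gives the bi-degree $\left({}^{1}_{0}\right)$.

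For (2) I will use a saturation argument. Denote by $n_1, s_0, s_1, s_2, p_0, p_1$ the total numbers of occurrences of $\bb{n}, \bb{s}, \db{s}, \dd{s}, \bb{p}, \db{p}$ respectively across the whole monomial $[w]$. Then by definition
\begin{equation*}
p \;=\; n_1 + s_0 + s_1 + s_2 + p_0 + p_1, \qquad q \;=\; s_1 + 2\,s_2 + p_1.
\end{equation*}
The defining constraint of $\mathfrak{b}(\mathcal{A})$ -- equal numbers of $\bb{n}$'s and $\bb{s}$'s in each bracelet -- is preserved under dotting, and transcribes in $\mathfrak{b}(\bar{\mathcal{A}})$ as $n_1 = s_0 + s_1 + s_2$ per bracelet, hence by additivity also at the level of the monomial. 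Substituting this identity yields
\begin{equation*}
p - q \;=\; n_1 + s_0 - s_2 + p_0 \;=\; 2\,s_0 + s_1 + p_0 \;\geqslant\; 0,
\end{equation*}
with equality precisely when $s_0 = s_1 = p_0 = 0$, i.e., when every letter of $[w]$ already lies in the constant set $\{\bb{n}, \db{p}, \dd{s}\}$ on which $\partial$ vanishes.

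Combining the two observations: by (1), every non-vanishing summand of $\partial^{k}[w]$ has length $p$ and dot count $q + k$. Taking $k = p - q$, each such summand satisfies the saturation equality just obtained, so every letter occurring in it is a constant of $\partial$; one further application of $\partial$ therefore annihilates it term by term, giving $\partial^{p-q+1}[w] = 0$, as claimed. No serious obstacle is expected; the only mildly delicate step is propagating the balance $n_1 = s_0 + s_1 + s_2$ from the single-bracelet constraint to arbitrary monomials, which is just additivity of the occurrence counts over the bracelet factors.
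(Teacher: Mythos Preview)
Your proof is correct and is precisely the natural unpacking of what the paper leaves implicit: the paper gives no proof at all, merely stating that the lemma ``is a simple consequence of the definition of $\partial$ and the structure of $\mathfrak{b}(\bar{\mathcal{A}})$.'' Your bookkeeping argument via the letter-count identity $p-q = 2s_0 + s_1 + p_0$ (using the preserved balance $n_1 = s_0+s_1+s_2$) is exactly the kind of direct verification the authors have in mind, so there is nothing to contrast.
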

\noindent For example,
\begin{equation}
\def\arraystretch{1.4}
\begin{array}{ll}
    \text{for}\;\;[\bb{n}\bb{s}][\bb{p}] \in \mathbb{C}[\mathfrak{b}(\bar{\mathcal{A}})]^{(0)}_{3}\;\;\text{one has} & \partial\big([\bb{n}\bb{s}][\bb{p}]\big) = [\bb{n}\db{s}][\bb{p}] + [\bb{n}\bb{s}][\db{p}]\in \mathbb{C}[\mathfrak{b}(\bar{\mathcal{A}})]^{\prime}_{3}\,,\\
    \hfill & \partial^2\big([\bb{n}\bb{s}][\bb{p}]\big) = [\bb{n}\dd{s}][\bb{p}] + 2\,[\bb{n}\db{s}][\db{p}]\in \mathbb{C}[\mathfrak{b}(\bar{\mathcal{A}})]^{\prime\prime}_{3}\,,\\
    \hfill & \partial^3\big([\bb{n}\bb{s}][\bb{p}]\big) = 3\,[\bb{n}\dd{s}][\db{p}] \in \mathbb{C}[\mathfrak{b}(\bar{\mathcal{A}})]^{\prime\prime\prime}_{3}\,,\\
    \hfill & \partial^4\big([\bb{n}\bb{s}][\bb{p}]\big) = 0\,.\\
\end{array}
\end{equation}

\paragraph{Trace operation.} To introduce the final ingredient for construction of $\Delta$, we consider the following $\mathbb{C}[\mathfrak{b}(\mathcal{A})]$-linear operation:
\begin{equation}
    \tau : \mathbb{C}[\mathfrak{b}(\bar{\mathcal{A}})]^{\prime\prime} \to \mathbb{C}[\mathfrak{b}(\mathcal{A})]\,,
\end{equation}
which is defined via the following rules. To formulate them, we accept a number of notations: {\it i)} we will write, for example, $[\bb{a}v]$ or $[\bb{a}u\bb{b}v]$ to specify particular letters $\bb{a},\bb{b}\in \bar{\mathcal{A}}$ in a bracelet, with the subwords $u,v$ being either empty or containing only letters from $\mathcal{A}$, {\it ii)} we will write $|w|_{\bb{a}}$ for the number of occurrences of the letter $\bb{a}$ in $w$ {\it iii)} we will say that $w$ is {\it fit} if it is either empty or $[w]\in\mathfrak{b}(\mathcal{A})$, and if each occurrence of $\bb{s}$ (if any) is followed by an occurrence of $\bb{n}$ at some position on the right. In the following expressions, the (sub)words $\db{p}u,\db{p}v$ on the left-hand-sides are assumed to have $u,v$ fit unless else is specified:
\begin{align}
        \tau : [\dd{s}u] \mapsto 2\delta\,[\bb{s}u] \,, & \hfill  \label{eq:trace_rule_1}\\
        \tau : [\db{s}u\db{s}v] \mapsto 2\,\big([\bb{s}u\bb{s}\,I(v)] + [\bb{s}u][\bb{s}v]\big)\,, & \quad\tau : [\db{s}u][\db{s}v] \mapsto 2\,\big([\bb{s}u\bb{s}v] + [\bb{s}u\bb{s}\,I(v)]\big)\,,\label{eq:trace_rule_2}\\
        \tau : [\db{p}u\db{s}v] \mapsto [\bb{p}u\bb{s}\,I(v)] + [\bb{p}u][\bb{s}v]\,, & \quad \tau : [\db{p}u][\db{s}v] \mapsto [\bb{p}u\bb{s}v] + [\bb{p}u\bb{s}\,I(v)]\,,\label{eq:trace_rule_3}\\
        \tau : [\db{p}u\db{p}v] \mapsto \left\{
        \begin{array}{ll}
            [\bb{n}u\bb{s}I(v)]\,, & \text{if $u,v$ are fit}\,, \\
            \left[\bb{n}u][\bb{s}v\right]\,, & |u|_{\bb{s}} > |u|_{\bb{n}}
        \end{array}
        \right.\,, & \quad\tau : [\db{p}u][\db{p}v] \mapsto [\bb{n}u\bb{s}I(v)]\,. \label{eq:trace_rule_4}
\end{align}
\begin{lemma}\label{lem:trace_rules}
    The rules \eqref{eq:trace_rule_1}-\eqref{eq:trace_rule_4} are correct and define $\tau$ unambiguously. Namely, {\it i)} the monomials entering the left-hand-sides of the rules form a $\mathbb{C}[\mathfrak{b}(\mathcal{A})]$-basis\footnote{In other words, any element in $\mathbb{C}[\mathfrak{b}(\bar{\mathcal{A}})]^{\prime\prime}$ is given by a unique combination of the left-hand-sides of \eqref{eq:trace_rule_1}-\eqref{eq:trace_rule_4}  with the coefficients in $\mathbb{C}[\mathfrak{b}(\mathcal{A})]$.} in $\mathbb{C}[\mathfrak{b}(\bar{\mathcal{A}})]^{\prime\prime}$,
    {\it ii)} if there are several representatives of a kind, the rules nevertheless lead to the same result.
\end{lemma}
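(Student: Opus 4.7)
The plan is a structured combinatorial case analysis, split according to the two assertions. For assertion (i), I would enumerate monomials in $\mathbb{C}[\mathfrak{b}(\bar{\mathcal{A}})]^{\prime\prime}$ by the distribution of the two dots: either both sit on the same letter $\dd{s}$ in a single bracelet (matching \eqref{eq:trace_rule_1}), or two singly-dotted letters appear, sitting co-located in one bracelet or distributed across two bracelets. Further discrimination by letter type ($\db{s}$ versus $\db{p}$) yields precisely the six LHS patterns in \eqref{eq:trace_rule_2}--\eqref{eq:trace_rule_4}. The fit condition on $u,v$ selects a canonical representative for each dotted bracelet, so the total set, when multiplied by arbitrary elements of $\mathbb{C}[\mathfrak{b}(\mathcal{A})]$, spans $\mathbb{C}[\mathfrak{b}(\bar{\mathcal{A}})]^{\prime\prime}$; linear independence then follows from the distinctness of the underlying dot skeletons.

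For assertion (ii), one has to verify that each rule is invariant under the equivalence relation defining bracelets: cyclic shifts and inversion of representative words. For \eqref{eq:trace_rule_1}--\eqref{eq:trace_rule_3} and the first case of \eqref{eq:trace_rule_4}, the nontrivial symmetries reduce to a $u \leftrightarrow v$ exchange (arising from cyclic shift by $|\db{a}u|$ positions on the dotted representative) together with inversion. Each term on the right-hand side should transform into another term of the same right-hand side: for instance in \eqref{eq:trace_rule_2}, one checks $[\bb{s}u\bb{s}I(v)] = [\bb{s}v\bb{s}I(u)]$ by reversal followed by cyclic shift, whereas the two-factor monomial $[\bb{s}u][\bb{s}v]$ is symmetric in $u,v$ by commutativity of the polynomial product; inversion-invariance of individual bracelets follows from $[\bb{s}I(u)] = [\bb{s}u]$. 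An analogous but asymmetric calculation handles the mixed rules \eqref{eq:trace_rule_3}.

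The main obstacle is the bifurcated rule \eqref{eq:trace_rule_4}. Here the plan is threefold: first, establish disjointness of the two cases (fitness of $u$ immediately forces $|u|_{\bb{s}} \leqslant |u|_{\bb{n}}$, ruling out the second case); second, show exhaustiveness by arguing that for any bracelet $[\db{p}w_1\db{p}w_2] \in \mathfrak{b}(\bar{\mathcal{A}})^{\prime\prime}$ at least one cyclic shift placing the head just after one of the two $\db{p}$'s puts the bracelet into the form required by one of the two cases---this is possible because the total counts of $\bb{n}$'s and $\bb{s}$'s in the bracelet are equal by the definition of $\mathfrak{b}(\bar{\mathcal{A}})$; and third, show coherence, i.e.\ that when two different representatives trigger different cases the resulting elements of $\mathbb{C}[\mathfrak{b}(\mathcal{A})]$ coincide. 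The latter relies on the ``between'' constraint of $\mathfrak{b}(\mathcal{A})$: it forces the positions of unpaired $\bb{n}$'s and $\bb{s}$'s in the imbalanced subword to match up under the alternative joining, so the one-bracelet output of the first case equals the two-bracelet output of the second after applying the arc/line identifications encoded in the ``between'' property.

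I expect the coherence step just described to be the genuinely delicate part, because the case split is not symmetric in $u,v$ (the condition is only imposed on $u$), so one must carefully track which representative generates the canonical parsing and argue that any two admissible parsings produce the same image. All other verifications reduce to direct manipulation of words under cyclic shift and reversal.
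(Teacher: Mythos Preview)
Your overall plan mirrors the paper's approach: enumerate dot distributions to get the $\mathbb{C}[\mathfrak{b}(\mathcal{A})]$-basis, then verify representative-independence rule by rule using cyclic shifts and inversion. The checks you sketch for \eqref{eq:trace_rule_1}--\eqref{eq:trace_rule_3} are essentially what the paper does.

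The one genuine misconception is your ``coherence'' step for the bifurcated rule \eqref{eq:trace_rule_4}. You propose to show that when two representatives of the same bracelet trigger the two different cases, the one-bracelet output $[\bb{n}u\bb{s}I(v)]$ and the two-bracelet output $[\bb{n}u][\bb{s}v]$ agree. But this situation never occurs, and those outputs could not agree anyway (one is a single bracelet, the other a product of two). The point you are missing is that the two cases are distinguished by a \emph{bracelet-level} invariant: in $[\db{p}u\db{p}v]$ the letter counts $|u|_{\bb{s}},|u|_{\bb{n}},|v|_{\bb{s}},|v|_{\bb{n}}$ are determined up to swapping the two segments and inversion, none of which changes the counts. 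If $u,v$ are fit then $|u|_{\bb{s}}=|u|_{\bb{n}}$ and $|v|_{\bb{s}}=|v|_{\bb{n}}$, so no alternative representative can satisfy $|u'|_{\bb{s}}>|u'|_{\bb{n}}$; conversely if $|u|_{\bb{s}}>|u|_{\bb{n}}$ then $[u]\notin\mathfrak{b}(\mathcal{A})$ for every representative, so the fit case is unreachable. Hence the only well-definedness checks needed for \eqref{eq:trace_rule_4} are \emph{within} each case: for fit $u,v$ compare with the cyclic alternative $[\db{p}v\db{p}u]$, and for $|u|_{\bb{s}}>|u|_{\bb{n}}$ compare with the inversion $[\db{p}I(u)\db{p}I(v)]$ (which again has $|I(u)|_{\bb{s}}>|I(u)|_{\bb{n}}$). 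Both are short computations. Drop the cross-case coherence step and your argument is complete.
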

\noindent For the proof see Appendix \ref{app:proof_lem_rules}.
\vskip 4 pt

We are in a position to express the operator $\Delta$ defined in \eqref{eq:A_matrix_elements} as a second-order differential operator on bracelets (see Appendix \ref{app:proof_Laplace} for proof).
\begin{theorem}\label{thm:Laplace}
    The operator $\Delta$ defined in \eqref{eq:A_matrix_elements} is given by
    \begin{equation}\label{eq:Laplace}
        \Delta = \frac{1}{2}\,\tau\circ \partial^{2}\,.
    \end{equation}
\end{theorem}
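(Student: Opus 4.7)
The plan is to compute $A_n * e_\zeta$ directly in diagrammatic terms and transport the answer through $\Phi$ to verify it matches $\tfrac{1}{2}(\tau\circ\partial^2)(\zeta)$. Since both $A_n$ and $e_\zeta$ lie in $C_n(\delta)$, the product lies in $C_n(\delta)$, and by linearity it suffices to fix a single class representative $b$ with $\Phi(\e_b)=\zeta$ and analyse $A_n\cdot\e_b = \sum_{i<j}\sum_{b'\sim b} d_{ij}\, b'$. Using the $\Sn n$-conjugation symmetry of the class sum, summing over $(i,j)$ and over $b'\sim b$ is equivalent to enumerating, over the bracelet decomposition of $b$, all unordered pairs of ``lower-incidence'' letters---the letters $\bb s$ and $\bb p$, since only these correspond to half-edges meeting the lower row where left multiplication by $d_{ij}$ acts. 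This equivalence explains at once why $\partial$ is defined to be trivial on $\bb n$ (upper arcs are inert under left multiplication by $d_{ij}$) and why the overall factor $\tfrac{1}{2}$ appears, compensating for the passage from ordered to unordered pairs.

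Next I would unpack the diagrammatic effect of a single product $d_{ij}\,b'$: a new lower $\bb s$-arc is planted between bottom nodes $i,j$, while the upper arc of $d_{ij}$ splices together the two half-edges of $b'$ emanating downward from lower nodes $i$ and $j$. In the identified-graph picture underlying Proposition \ref{prop:classes-bracelets}, the bracelets of $b'$ are cycles, and the splice operation picks two half-edges on those cycles and performs a surgery. A case-by-case check should then recover rules \eqref{eq:trace_rule_1}--\eqref{eq:trace_rule_4}: two marks on the same $\bb s$-arc create a closed loop giving a factor $\delta$ together with a reproduction of $[\bb s u]$ (matching \eqref{eq:trace_rule_1}); two marks on distinct $\bb s$-arcs either merge two bracelets or split one bracelet, according to whether the marks sit in the same or different cycles (matching \eqref{eq:trace_rule_2}); a mixed $\db s,\db p$ pair gives the arc--line hybrid of \eqref{eq:trace_rule_3}; and two $\db p$ marks cause two vertical lines to reconnect, producing a new $\bb n\bb s$ pair on the composite diagram (matching \eqref{eq:trace_rule_4}). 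The normalisation $\tfrac{1}{n!}$ in the definition of $\Phi$, combined with the centraliser sizes in \eqref{eq:normalised_classes} and the combinatorial multiplicities of the choices, should yield the precise numerical coefficients in the rules.

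The principal obstacle is the orientation bookkeeping for cycle merging: when two distinct bracelets are joined via the upper arc of $d_{ij}$ together with the new lower $\bb s$-arc, one of the two cycles must be traversed in the reverse direction in order to form a single consistent cycle in the composite. This is the origin of the reverse word $I(v)$ in \eqref{eq:trace_rule_2}--\eqref{eq:trace_rule_4}, and also of the two sub-cases in \eqref{eq:trace_rule_4} according to whether the intermediate sub-word is fit. Setting up a uniform convention for reading off a bracelet along a cycle, compatible with the alternation of ``upper-side'' and ``lower-side'' half-edges as one traverses it, will be the delicate part; once such a convention is fixed, Lemma \ref{lem:trace_rules} guarantees that $\tau$ is unambiguously defined on $\mathbb{C}[\mathfrak{b}(\bar{\mathcal A})]^{\prime\prime}$, so the diagrammatic output of the surgery can be read off via $\tau\circ\partial^2$, and linearity promotes the identity checked on a single class representative to the claim \eqref{eq:Laplace} on all of $\mathbb{C}[\mathfrak{b}(\mathcal A)]_n$.
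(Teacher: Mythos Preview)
Your proposal is correct and follows essentially the same approach as the paper: reduce to analysing $d_{ij}\,b$ on a fixed representative, interpret the attachment of the lower arc as a surgery on the cycle structure, and verify case by case that the outcome matches rules \eqref{eq:trace_rule_1}--\eqref{eq:trace_rule_4}. The paper streamlines your double sum $\sum_{i<j}\sum_{b'\sim b}$ by invoking the identity $A_n\gamma_b=\gamma_{(A_n b)}$, which lets one work with a single convenient representative $b$ and sum only over arc placements; this sidesteps the centraliser-size and normalisation bookkeeping you flag, and makes the orientation conventions (your ``principal obstacle'') easier to fix once and for all on that representative.
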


For example, consider the algebra $C_{3}(\delta)\subset B_{3}(\delta)$. As a matter of consistency let us check that $A_3 * e_{[\bb{p}]^3} = A_3*6 = 3\, e_{[\bb{ns}][\bb{p}]}$. Indeed,
\begin{equation}\label{eq:Laplace_exp_Br_3}
    \partial^2 [\bb{p}]^3 = 6\,[\db{p}]^2[\bb{p}]\quad \Rightarrow \quad \Delta\left([\bb{p}]^3\right) = 3\,[\bb{n}\bb{s}][\bb{p}]\,.
\end{equation}
For the rest one has:
\begin{equation}\label{eq:Laplace_Br_3}
\def\arraystretch{1.4}
    \begin{array}{lcl}
        \partial^2 \left[\bb{p}\bb{p}\right]\left[\bb{p}\right] = 2\,\left[\db{p}\db{p}\right]\left[\bb{p}\right] + 4\, \left[\db{p}\bb{p}\right]\left[\db{p}\right] & \multirow{4}{*}{$\Rightarrow$} & \Delta \big(\left[\bb{p}\bb{p}\right]\left[\bb{p}\right]\big) = \left[\bb{n}\bb{s}\right]\left[\bb{p}\right] + 2\,\left[\bb{n}\bb{s}\bb{p}\right]\,,  \\
        \partial^2\left[\bb{p}\bb{p}\bb{p}\right] = 6\,\left[\db{p}\db{p}\bb{p}\right] & & \Delta\big(\left[\bb{p}\bb{p}\bb{p}\right]\big) = 3\,\left[\bb{n}\bb{s}\bb{p}\right]\,,\\
        \partial^2\left[\bb{n}\bb{s}\right]\left[\bb{p}\right] = \left[\bb{n}\dd{s}\right]\left[\bb{p}\right] + 2\,\left[\bb{n}\db{s}\right]\left[\db{p}\right] & & \Delta\big(\left[\bb{n}\bb{s}\right]\left[\bb{p}\right]\big) = \delta\,\left[\bb{n}\bb{s}\right]\left[\bb{p}\right] + 2\,\left[\bb{n}\bb{s}\bb{p}\right]\,,\\
        \partial^2\left[\bb{n}\bb{s}\bb{p}\right] = \left[\bb{n}\dd{s}\bb{p}\right] + 2\,\left[\bb{n}\db{s}\db{p}\right] & & \Delta\big(\left[\bb{n}\bb{s}\bb{p}\right]\big) = \left[\bb{n}\bb{s}\right]\left[\bb{p}\right] + (\delta+1)\,\left[\bb{n}\bb{s}\bb{p}\right]\,,\\
    \end{array}
\end{equation}
which reproduces the result \eqref{eq:delta_direct} obtained by direct diagram-wise computation. From the third line in \eqref{eq:Laplace_Br_3} one obtains $(A_3)^2 = \delta\,A_3 + e_{[\bb{nsp}]}$, which gives a shortcut towards the final expression in \eqref{eq:projector_Br_3}.
\vskip 4 pt

\paragraph{Cardinality $|C_{\Sn{n}}(b)|$ via symmetries of $\Phi(\e_b)$.} In the previous paragraphs we developed a technique to express the left regular action of $A_n$ on the basis $e_{\zeta} \in C_{n}(\delta)$ (where  $\zeta\in\mathbb{C}[\mathfrak{b}(\mathcal{A})]$ are monic monomials) avoiding diagram-wise computations. Nevertheless, in order to apply the final expanded form of $P_n$ \eqref{eq:projector} to a tensor, the basis elements $e_{\zeta} = \e_b$ should be expressed as sums of conjugate Brauer diagrams \eqref{eq:normalised_classes}. In this respect it is convenient to rescale the basis $e_{\zeta}$ and use the normalised sums
\begin{equation}\label{eq:basis_normalised}
    \check{e}_{\zeta} = \frac{1}{|C_{\Sn{n}}(b)|}\, \e_b\,,
\end{equation}
which are sums of conjugate diagrams with the overall coefficient $1$ (recall \eqref{eq:normalised_classes}). In particular, $A_n = \check{e}_{[\bb{ns}][\bb{p}]^{n-2}}$. It appears that the coefficient $|C_{\Sn{n}}(b)|$ can be expressed in terms of certain symmetry properties of the representatives in bracelets, which allows one to avoid diagram computations again. 
\vskip 4 pt

Consider the group of cyclic permutations $\mathbb{Z}_{\ell}$ which acts on the words of the length $\ell$. Recall that $I(w)$ denotes the inversion of a word $w$. Given any word $w$ of length $\ell$, define its {\it turnover stabilizer}:
\begin{equation}\label{eq:stabilizer_word}
    S(w) = \left\{h\in \mathbb{Z}_{\ell}\;\; : \;\; \text{either}\;\; h(w) = w\;\; \text{or}\;\; h(w) = I(w)\right\}\,.
\end{equation}
Note that both conditions in \eqref{eq:stabilizer_word} are simultaneously satisfied by an element $h\in\mathbb{Z}_{\ell}$ only if $w = I(w)$, {\it i.e.} the word is inversion-invariant. In the case of representatives of bracelet from $\mathfrak{b}(\mathcal{A})$, inversion-invariant words appear only as representatives of $[\bb{p}\dots\bb{p}]$. In this case $\big|S(\underbrace{\bb{p}\dots\bb{p}}_{\ell})\big| = \ell$. In the other cases, when $\bb{n}$ and $\bb{s}$ occur in a bracelet from $\mathfrak{b}(\mathcal{A})$, none of its representatives is inversion-invariant, so a permutation $h\in \mathbb{Z}_{\ell}$ satisfies at most one of the conditions in \eqref{eq:stabilizer_word}.
\vskip 4 pt

As a side remark, note the following properties concerning the definition of the stabilizer \eqref{eq:stabilizer_word}. It can happen that the conditions in \eqref{eq:stabilizer_word} are satisfied only for the trivial transformation: for example, in the case of the bracelet $[\bb{n}\bb{s}\bb{p}]$. Another observation is that the existence of a transformation $h\in\mathbb{Z}_{\ell}$ such that $h(w) = w$ (respectively, $h(w) = I(w)$) does not imply necessarily the existence of $h^{\prime}\in\mathbb{Z}_{\ell}$ such that $h^{\prime}(w) = I(w)$ (respectively, $h^{\prime}(w) = w$): as an example, consider $[\bb{n}\bb{s}\bb{p}\bb{n}\bb{s}\bb{p}]$ (respectively, $[\bb{n}\bb{s}\bb{n}\bb{p}\bb{s}\bb{p}]$).
\vskip 4 pt

For a bracelet $[w]\in\mathfrak{b}(\mathcal{A})$, the cardinality $|S(w)|$ does not depend on particular choice of a representative, so the function $\mathrm{st}\big([w]\big) = |S(w)|$ is well-defined on bracelets.
\vskip 4 pt

For a monomial in $\mathbb{C}[\mathfrak{b}(\mathcal{A})]$ define its {\it turnover stability index} as follows:
\begin{equation}
    \mathrm{st}\big([w_1]^{m_1}\dots [w_r]^{m_r}\big) = \prod_{j=1}^{r} \mathrm{st}\big([w_j]\big)\,m_{j}!\,.
\end{equation}
The following lemma gives a convenient method of calculating the coefficient in \eqref{eq:normalised_classes} (see Appendix \ref{app:lemma_sym} for proof).
\begin{lemma}\label{lem:sym} For any diagram $b\in B_{n}(\delta)$ holds $|C_{\Sn{n}}(b)| = \mathrm{st}\big(\Phi(\e_b)\big)$. Thus, the change of basis \eqref{eq:basis_normalised} is written as
\begin{equation}
    \check{e}_{\zeta} = \frac{1}{\mathrm{st}(\zeta)}\,e_{\zeta}\,.
\end{equation}
\end{lemma}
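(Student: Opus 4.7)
The plan is to identify $C_{\Sn{n}}(b)$ with the symmetry group of the labeled loop decomposition of $b$ produced by $\Phi$, and then count its elements.

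First, I would observe that conjugation $sbs^{-1}$ acts on $b$ by relabeling the $n$ columns (pairs of vertically-aligned nodes) according to $s\in\Sn{n}$, preserving the type of every line (upper arc, lower arc, or vertical). Hence $s\in C_{\Sn{n}}(b)$ if and only if $s$ is a type-preserving automorphism of the multigraph whose vertices are the $n$ columns and whose edges are the lines of $b$, decorated by $\bb{n}$, $\bb{s}$, or $\bb{p}$. The gluing construction defining $\Phi$---identify upper node $i$ with lower node $i$ for each $i$---decomposes this multigraph into a disjoint union of labeled cycles, one for each bracelet in the monomial $\Phi(\gamma_b)=[w_1]^{m_1}\dots[w_r]^{m_r}$.

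Second, any such column automorphism must permute cycles of the same bracelet type among themselves, so $C_{\Sn{n}}(b)$ factors as a direct product over the distinct bracelet types $[w_j]$. On the $m_j$ cycles of type $[w_j]$ the automorphism group is the wreath product $G_j\wr\Sn{m_j}$, where $G_j$ is the column-automorphism group of a single labeled cycle carrying the word $w_j$ and $\Sn{m_j}$ permutes the $m_j$ such cycles among themselves. I would then identify $G_j$ with the turnover stabilizer $S(w_j)$ of \eqref{eq:stabilizer_word}: a vertex relabeling of a labeled $k$-cycle is an automorphism iff it is either a rotation whose induced cyclic shift $h\in\mathbb{Z}_k$ fixes the label word $w_j$, or a reflection whose induced shift satisfies $h(w_j)=I(w_j)$, and together these are precisely the elements of $S(w_j)$.

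Multiplying the cardinalities yields $|C_{\Sn{n}}(b)|=\mathrm{st}(\Phi(\gamma_b))$, from which the normalisation formula $\check{e}_\zeta=\tfrac{1}{\mathrm{st}(\zeta)}\,e_\zeta$ follows via \eqref{eq:normalised_classes} and Proposition \ref{prop:classes-bracelets}. The main obstacle is to verify that the reflection-type elements of $S(w_j)$---those satisfying $h(w_j)=I(w_j)$---really lift to column permutations preserving the Brauer diagram: reversing the direction in which a loop is traversed only changes the order in which its edges are met and not their $\bb{n}/\bb{s}/\bb{p}$ type, so the column relabeling forced by the reversed traversal preserves the three edge types separately and hence belongs to $C_{\Sn{n}}(b)$.
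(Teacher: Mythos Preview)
Your argument has a genuine gap at the very first step. The claim that $s\in C_{\Sn{n}}(b)$ if and only if $s$ is a type-preserving automorphism of the column multigraph is false, because the multigraph forgets the orientation of $\bb{p}$-edges: a vertical line from $u_i$ to $l_j$ and one from $u_j$ to $l_i$ give the same undirected $\bb{p}$-edge between columns $i$ and $j$, yet they are different Brauer diagrams. Concretely, take $b$ equal to the $n$-cycle $(1\,2\,\cdots\,n)$ in $\Sn{n}\subset B_n(\delta)$. Your multigraph is the $n$-cycle with all edges labelled $\bb{p}$, whose automorphism group is the full dihedral group of order $2n$; but $C_{\Sn{n}}(b)=\langle b\rangle$ has order $n$. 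The reflection you invoke in your last paragraph sends $b$ to $b^{-1}$, not to $b$: reversing the traversal of a pure-$\bb{p}$ loop does preserve the edge \emph{types}, but it swaps the upper and lower endpoints of every vertical line, so it does \emph{not} preserve the Brauer diagram.

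The same pure-$\bb{p}$ case breaks your identification $|G_j|=|S(w_j)|$: for $w_j=\bb{p}^k$ one has $|S(\bb{p}^k)|=k$ (the two conditions in the definition of $S$ coincide), while the labelled cycle has $2k$ automorphisms. These two errors happen to cancel, which is why your final formula is correct, but the argument is not. The clean fix is to record the $u/l$ ``port'' at each endpoint, equivalently to orient every $\bb{p}$-edge from its upper endpoint's column to its lower endpoint's column; then $C_{\Sn{n}}(b)$ is exactly the port-preserving automorphism group. For a cycle containing at least one $\bb{n}$ (hence one $\bb{s}$) the ports are determined by the labels, so port-preserving and label-preserving automorphisms coincide and your rotation/reflection count gives $|S(w_j)|$; for a pure-$\bb{p}$ cycle only the rotations survive, again giving $|S(\bb{p}^k)|=k$. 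The paper's proof avoids this issue by fixing a concrete representative and checking directly which elements $c^m r^p$ of the ambient dihedral group actually lie in $C_{\Sn{n}}(b)$, rather than passing through an intermediate graph-automorphism group.
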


\paragraph{(Non)-commutativity of $C_{n}(\delta)$.} Restriction to the subalgebra $\mathbb{C}\Sn{n}\subset B_{n}(\delta)$ is the central ingredient in the proposed construction of the traceless projector. The algebra $C_{n}(\delta)$ centralises $\mathbb{C}\Sn{n}$ in $B_{n}(\delta)$ and {\it vice versa}. Along the same lines as explained in Section \ref{sec:basics} about the centraliser algebras, $C_{n}(\delta)$ takes care of the multiplicities in the decomposition of a simple $B_{n}(\delta)$-module into simple $\mathbb{C}\Sn{n}$-modules: this is because $C_{n}(\delta)$ contains all intertwiners between equivalent $\mathbb{C}\Sn{n}$-submodules. By applying Schur's lemma, $C_{n}(\delta)$ is commutative iff multiplicities in the decompositions in question are bounded by $1$. 
\vskip 4 pt

In order to analyse commutativity of $C_{n}(\delta)$ we make use of the flip operation $(\,\cdot\,)^{*}$ \eqref{eq:flip} (defined as a linear map for any $\delta\in\mathbb{C}$), such that $C_{n}(\delta)$ is the algebra with an anti-involution. Then by \cite[Lemma 2.3]{OV_2}, the algebra $C_{n}(\delta)$ is commutative iff for any $u\in C_{n}(\delta)$ holds $u^{*} u = u u^{*}$. To analyse the latter condition we will make use of the basis \eqref{eq:basis_bracelets} and introduce an involution on bracelets which commutes with the isomorphism in Proposition \ref{prop:classes-bracelets}. Namely, upon $B_{n}(\delta)\ni b \mapsto b^{*}$ the arcs in the upper (respectively, lower) row are placed to the lower (respectively, upper) row, while vertical lines remain vertical. Then by construction of the map $\Phi$, the image $\Phi(b^{*})$ is obtained from $\Phi(b)$ by mapping $\bb{n} \mapsto \bb{s}$, $\bb{s} \mapsto \bb{n}$ in each bracelet\footnote{In other words, one extends the involutive map $\bb{n}\mapsto \bb{s}$, $\bb{s} \mapsto \bb{n}$ on $\mathcal{A}$ to the automorphism of the monoid of words over $\mathcal{A}$, and then to $\mathbb{C}[\mathfrak{b}(\mathcal{A})]$ by linearity.} (see \cite[Lemma 2.14]{Shalile_Br-center}). Abusing notation once again, 
\begin{equation}\label{eq:ns-flip}
    \text{for any}\quad \xi\in\mathbb{C}[\mathfrak{b}(\mathcal{A})]\quad \text{define}\quad \xi^{*}\quad \text{as obtained by}\quad \bb{n}\mapsto \bb{s}\,,\;\; \bb{s} \mapsto \bb{n}.
\end{equation}
\vskip 4 pt

With \eqref{eq:ns-flip} at hand, $C_{n}(\delta)$ is commutative iff $e_{\xi} e_{\xi^{*}} =  e_{\xi^{*}} e_{\xi}$ for any monic monomial $\xi \in\mathbb{C}[\mathfrak{b}(\mathcal{A})]_{n}$. The analysis of the latter condition constitutes the proof of the following lemma (see Appendix \ref{app:lemma_commutativity} for proof).
\begin{lemma}\label{lem:commutativity}
    For any $\delta\in\mathbb{C}$ the algebra $C_{n}(\delta)\subset B_{n}(\delta)$ is commutative for $n \leqslant 5$, and is non-commutative for $n\geqslant 6$. More precisely, for $n\geqslant 6$ and $f_{\mathrm{max}} = \lfloor\tfrac{n}{2}\rfloor$:
    \begin{itemize}
        \item[{\it i)}] the quotient algebra $C_{n}(\delta)\slash J^{(f)}$ is commutative for $f = 1,2$ and is non-commutative for $f = 3,\dots,f_{\mathrm{max}}$;
        \item[{\it ii)}] the subalgebra $C_{n}(\delta)\cap J^{(f)}$ is commutative for $f = f_{\mathrm{max}}$ and is non-commutative for $f = 0,\dots,f_{\mathrm{max}} - 1$.
    \end{itemize}
\end{lemma}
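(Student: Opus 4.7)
The plan is to reduce (non)commutativity of $C_n(\delta)$, and of the subalgebras and quotients $C_n(\delta)\cap J^{(f)}$ and $C_n(\delta)/J^{(f)}$, to a bound on branching multiplicities of standard $B_n(\delta)$-modules upon restriction to $\mathbb{C}\Sn{n}$. The commutative cases will be settled by a Pieri-type estimate, while for the non-commutative cases I will exhibit explicit multiplicity-two witnesses.

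In the semisimple regime one has $B_n(\delta)\cong\bigoplus_\lambda\mathrm{End}(\Delta_n^{(\lambda)})$ with $\lambda\vdash n-2f'$, and by Lemma \ref{lem:restriction_decomposition} the restriction $\Delta_n^{(\lambda)}\downarrow\mathbb{C}\Sn{n}$ decomposes as $\bigoplus_\mu (L^{(\mu)})^{\oplus m_{\mu\lambda}}$ with $m_{\mu\lambda}=\sum_{\nu\;\mathrm{even}}c^\mu_{\nu\lambda}$. The double-centraliser theorem then gives $C_n(\delta)\cong\bigoplus_{\lambda,\mu}\mathrm{Mat}_{m_{\mu\lambda}}(\mathbb{C})$, which is commutative iff every $m_{\mu\lambda}\leq 1$; the ideal $C_n(\delta)\cap J^{(f)}$ collects the blocks with $|\lambda|\leq n-2f$, while $C_n(\delta)/J^{(f)}$ collects those with $|\lambda|>n-2f$. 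To extend from generic to arbitrary $\delta\in\mathbb{C}$ I use that the structure constants of $C_n(\delta)$ in the bracelet basis of Proposition \ref{prop:classes-bracelets} are polynomials in $\delta$: a generically valid identity $[e_\xi,e_{\xi^*}]=0$ extends by Zariski density to every $\delta$, and for non-commutativity I will select witnesses whose commutator carries a $\delta$-independent non-vanishing coefficient (the top arc-count term, obtained from $\tau\circ\partial^2$ in Theorem \ref{thm:Laplace} without invoking the loop rule \eqref{eq:trace_rule_1}), which ensures non-vanishing at every specialisation.

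For the commutativity statements the Pieri rule is the main bound. When $n\leq 5$, blocks have $f'\in\{0,1,2\}$: $f'=0$ is trivial; for $f'=1$ only $\nu=(2)$ contributes and $c^\mu_{(2)\lambda}\leq 1$ by Pieri; and for $f'=2$ one has $|\lambda|\leq 1$, for which a direct LR enumeration shows that $(4)\otimes\lambda$ and $(2,2)\otimes\lambda$ have no common irreducible summand with $|\mu|\leq 5$, hence $m_{\mu\lambda}\leq 1$. The quotient $C_n(\delta)/J^{(f)}$ for $f\in\{1,2\}$ (for any $n$) only involves $f'\leq 1$, so commutativity follows from the same Pieri bound; for $f=1$ one moreover recognises $C_n(\delta)/J^{(1)}\cong Z(\mathbb{C}\Sn{n})$ via \eqref{eq:arc_factor}. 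The subalgebra $C_n(\delta)\cap J^{(f_{\max})}$ only involves $|\lambda|\in\{0,1\}$: here $c^\mu_{\nu\varnothing}=\delta_{\mu\nu}$ and $c^\mu_{\nu(1)}$ is again Pieri, each bounded by~$1$.

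For the non-commutativity statements I construct two multiplicity-two witnesses, one for each threshold of the filtration. At $f'=2$, take $\lambda=(n-4)$ and $\mu=(n-2,2)$: by Pieri's rule $c^\mu_{(4)\lambda}=1$ (the horizontal $4$-strip consisting of two boxes in row $1$ at columns $n-3,n-2$ and two in row $2$ at columns $1,2$), and by direct enumeration $c^\mu_{(2,2)\lambda}=1$ via the unique Yamanouchi SSYT of shape $\mu\backslash\lambda$ with weight $(2,2)$ placing entries $1,1$ in the row-$1$ boxes and $2,2$ in the row-$2$ boxes. Both $\nu$ are even, so $m_{\mu\lambda}\geq 2$ in a block with $f'=2$, yielding non-commutativity of $C_n(\delta)/J^{(f)}$ for every $f\geq 3$. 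At $f'=f_{\max}-1$ (giving $|\lambda|=2$ for $n$ even and $|\lambda|=3$ for $n$ odd) take $\lambda=(|\lambda|)$, $\mu=(n-2,2)$ and the two even partitions $\nu_1=(n-|\lambda|)$, $\nu_2=(n-|\lambda|-2,2)$ of size $n-|\lambda|$; an analogous Pieri/Yamanouchi calculation gives $c^\mu_{\nu_i\lambda}=1$ for $i=1,2$, hence $m_{\mu\lambda}\geq 2$ in a block lying in $C_n(\delta)\cap J^{(f)}$ for every $f\leq f_{\max}-1$. This covers non-commutativity of the subalgebra throughout that range and, at $f=0$, of $C_n(\delta)$ itself. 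The main obstacle is indeed the ``for any $\delta\in\mathbb{C}$'' clause for non-commutativity in the non-semisimple regime, resolved by the top-arc-count mechanism outlined in the second paragraph.
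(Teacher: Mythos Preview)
Your route is genuinely different from the paper's. The paper never touches branching multiplicities: it uses the anti-involution $(\cdot)^{*}$ on bracelets together with the criterion that an algebra with anti-involution is commutative iff $uu^{*}=u^{*}u$ for all $u$ \cite{OV_2}. Commutativity then follows from the elementary observation that every bracelet $\xi$ with at most one letter $\bb{n}$ (covering $n\le 5$ and the quotients $f\le 2$) or at most one letter $\bb{p}$ (covering $f=f_{\max}$) satisfies $\xi^{*}=\xi$. For non-commutativity the paper exhibits a single explicit bracelet $\zeta_n=[\bb{nsnpsp}][\bb{p}]^{n-6}$ (respectively its arc-enriched variants) and computes two coefficients of the commutator with $A_n$ (respectively $A_n^{(2)}$, $A_n^{(f_{\max}-1)}$), checking that the resulting pair of linear polynomials in $\delta$ has no common zero. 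Your multiplicity approach is more conceptual and explains cleanly \emph{why} the threshold sits at two arcs (only $\nu\in\{\varnothing,(2)\}$ can occur, and Pieri is multiplicity-free), whereas the paper's approach is more direct and uniform in $\delta$ from the outset.

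Your commutativity arguments are correct: the Pieri/LR bounds give $m_{\mu\lambda}\le 1$ in the semisimple regime, and Zariski density in $\delta$ pushes the identities $[e_\xi,e_\eta]=0$ to every specialisation, both in $C_n(\delta)$ and in its quotients and ideals (whose bracelet bases are $\delta$-independent).

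There is, however, a real gap in the non-commutative direction for arbitrary $\delta$. Your multiplicity-two witnesses $(\lambda,\mu)$ establish non-commutativity only in the semisimple regime; a polynomial family of algebras can be generically non-commutative yet commutative at isolated parameter values, so generic non-commutativity does not propagate by Zariski density. You acknowledge this and propose to ``select witnesses whose commutator carries a $\delta$-independent non-vanishing coefficient'', but the proposal never produces those algebra elements nor verifies the claimed property. The multiplicity datum $(\lambda,\mu,\nu_1,\nu_2)$ does not by itself single out basis elements $e_\xi,e_\eta$; and the ``top arc-count'' heuristic (rules \eqref{eq:trace_rule_2}--\eqref{eq:trace_rule_4} are $\delta$-free) only says that \emph{some} part of a product is $\delta$-independent, not that the commutator's leading part is nonzero. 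Indeed, in the paper's own computation the decisive coefficients sit at the \emph{same} arc-count as $\zeta_n$, not at the top, and they are genuinely $\delta$-dependent---what makes the argument work is that the two polynomials $\delta-2$ and $-\delta$ (resp.\ their analogues) have no common root. To close the gap you would have to exhibit explicit $e_\xi,e_\eta$ in the bracelet basis and carry out a computation of this kind; at that point your argument essentially merges with the paper's.
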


As a particular example, $C_{n}(\delta)\slash J^{(1)} \cong C_{n}(\delta)\cap \mathbb{C}\Sn{n}$, which is known to be the center of $\mathbb{C}\Sn{n}$. For any $s\in \Sn{n}$ one has $\Phi(\gamma_{s}) = [\bb{p}^{\lambda_1}]\dots [\bb{p}^{\lambda_r}]$, so $\Phi(\gamma_{s}) = \Phi(\gamma_{s})^{*}$. Since $(\,\cdot\,)^{*}$ commutes with the isomorphism in Proposition \ref{prop:classes-bracelets}, one has $\gamma_s = (\gamma_s)^{*}$. Therefore, for any any central element $u\in\mathbb{C}\Sn{n}$ holds $u^{*} = u$ and $\mathfrak{r}(u)^{*} = \mathfrak{r}(u)$. In particular, for central Young symmetrisers (utilised in \eqref{eq:projector_semi_sum}) one has $(z^{(\mu)})^{*} = z^{(\mu)}$.
\vskip 4 pt

\section{Splitting idempotent}\label{sec:splitting_idempotent}

\paragraph{Traceless subspace as a quotient space.} In order to introduce the subject of this section, let us recall that the subspace of traceless tensors can be identified with equivalence classes $V^{\otimes n}\slash W$ (of $V^{\otimes n}$ modulo $W$, the subspace of tensors proportional to metric). If one denotes the canonical embedding $\iota : W \rightarrow V^{\otimes n}$ and the projection $\pi : V^{\otimes n} \rightarrow V^{\otimes n}\slash W$, then the existence of a traceless projection is equivalent to existence of a section $\sigma:  V^{\otimes n}\slash W \rightarrow V^{\otimes n}$, such that the following short exact sequence splits:
\begin{equation}\label{eq:split_tensor}
\def\arraystretch{1.4}
\begin{array}{l}
    0\xrightarrow{\quad} W \xrightarrow{\quad\iota\quad} V^{\otimes n} \xrightleftarrows[\text{exists}\;\; \sigma]{\pi} V^{\otimes n}\slash W \xrightarrow{\quad} 0\,, \;\; \text{where}\;\; \pi\circ \sigma = \mathrm{id}\,, \\
    \text{and for}\;\; \mathfrak{P}_n = \sigma\circ \pi\;\;\text{one has}\;\; V^{\otimes n} = W \oplus \mathfrak{P}_n V^{\otimes n}\,.
\end{array}
\end{equation}
In this context, the projector $\mathfrak{P}_n$ is referred to as {\it splitting idempotent} of the above short exact sequence. By Theorem \ref{thm:projector}, $\mathfrak{P}_n$ is be expressed as a $\mathfrak{r}$-image of an element $P_n\in B_{n}(\varepsilon N)$ \eqref{eq:projector}. 
\vskip 4 pt

In this section we are interested in the properties of the element $P_n$ itself, as an operator in the left regular $B_{n}(\delta)$-module ({\it i.e.} its action on the vector space $B_{n}(\delta)$ via left multiplication). We assume the semisimple regime for $B_{n}(\delta)$ by allowing $\delta\in \mathbb{C}\backslash \{0\}$, while for $\delta \in \mathbb{Z}\backslash\{0\}$ we assume $\delta \leqslant - 2(n-1)$ and $\delta \geqslant n-1$ (for the semisimplicity criterion see \cite{Rui_Br_semisimple} and references therein).

\paragraph{$P_n$ as a splitting idempotent.} In analogy with the traceless projection of $V^{\otimes n}$, which is annihilated by the action of $J^{(1)}$, one can be interested in constructing the ``traceless'' projection of the left regular $B_{n}(\delta)$-module. Due to the equivalence of the left and right regular modules of $B_{n}(\delta)$ via $(\,\cdot\,)^{*}$, the sought projector should be a central idempotent $\bar{P}_n\in B_{n}(\delta)$ which provides the decomposition $B_{n}(\delta) = J^{(1)}\oplus \bar{P}_n B_{n}(\delta)$ with $\bar{P}_n B_{n}(\delta) \cong \mathbb{C}\Sn{n}$ (direct product of algebras). In other words, one looks for a splitting idempotent for the following exact sequence similar to the one in \eqref{eq:split_tensor}:
\begin{equation}\label{eq:splitting_sequence_B}
    \begin{array}{l}
        0\xrightarrow{\quad} J^{(1)} \xrightarrow{\quad} B_{n}(\delta) \xrightleftarrows[\text{exists}\;\; \bar{\sigma}]{\bar{\pi}} B_{n}(\delta)\slash J^{(1)}\,(\cong \mathbb{C}\Sn{n}) \xrightarrow{\quad} 0\,, \;\; \text{where}\;\; \bar{\pi}\circ \bar{\sigma} = \mathrm{id}\,, \\
        \text{and for}\;\; \bar{P}_n = \bar{\sigma}\circ \bar{\pi}\;\;\text{one has}\;\; B_{n}(\delta) = J^{(1)} \oplus \bar{P}_n B_{n}(\delta)\,.
    \end{array}
    \end{equation}
Such idempotent exists and is unique \cite{KMP_central_idempotents}. 
\begin{theorem}\label{thm:splitting_idempotent}
    Let $\delta\in\mathbb{C}\backslash\{0\}$ and $n\in \mathbb{N}$ are such that $B_{n}(\delta)$ is semisimple. Then $\bar{P}_n$ is constructed as \eqref{eq:projector} by dropping all restrictions on the numbers of rows/columns in the Young diagrams in the definitions \eqref{eq:def_Lambda}, \eqref{eq:def_Sigma}, \eqref{eq:LR-closure} and \eqref{eq:skew-shape}.
\end{theorem}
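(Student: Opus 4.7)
The plan is to parallel the proof of Theorem~\ref{thm:projector}, but now working internally in the semisimple algebra $B_{n}(\delta)$ instead of through its action on $V^{\otimes n}$. First I would invoke the Wedderburn decomposition
\[
    B_{n}(\delta)\;\cong\;\bigoplus_{f=0}^{\lfloor n/2\rfloor}\bigoplus_{\lambda\,\vdash\,n-2f}\mathrm{End}\big(\Delta^{(\lambda)}_{n}\big)
\]
provided by semisimplicity. Under this decomposition $J^{(1)}$ is exactly the sum of blocks with $f\geqslant 1$, while the $f=0$ blocks map isomorphically onto the quotient $B_{n}(\delta)/J^{(1)}\cong \mathbb{C}\Sn{n}$. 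Hence the unique central idempotent splitting the sequence \eqref{eq:splitting_sequence_B} is the sum $\sum_{\mu\vdash n} z^{B_n}_{\mu}$ of the Wedderburn unit elements on the simple modules $\Delta^{(\mu)}_{n}$ with $\mu\vdash n$, and the task reduces to showing that $\bar{P}_{n}$, as defined in the statement, agrees with this sum.

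Next I would transport the eigenvalue formula \eqref{eq:eigenvalue_A} of Lemma~\ref{lem:D_block_diagonal} from the tensorial setting to the abstract one. The proof of that lemma rests only on three facts that all survive in the semisimple regime for generic $\delta$: the centrality of $X_{B}$ in $B_{n}(\delta)$ with central character $(n-2f)\tfrac{\delta-1}{2}+c(\lambda)$ on $\Delta^{(\lambda)}_{n}$ (Nazarov's result); the centrality of $X_{S}$ in $\mathbb{C}\Sn{n}$ with eigenvalue $c(\mu)$ on $L^{(\mu)}$; and the identity $X_{B}=X_{S}+n(\delta-1)/2-A_{n}$. Consequently, on every $\mathbb{C}\Sn{n}$-summand $L^{(\mu)}\subset \Delta^{(\lambda)}_{n}$ (with $\lambda\subset\mu$ and $c^{\mu}_{\nu\lambda}\neq 0$ for some even $\nu\vdash 2f$) the operator $A_{n}$ acts by the scalar $(\delta-1)f+c(\mu\backslash\lambda)$, now without any Littlewood row/column restrictions on $\lambda$ or $\mu$. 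In particular, since $A_{n}\in J^{(1)}$ this scalar vanishes identically when $f=0$, so $\bar{P}_{n}$ acts as the identity on every $\Delta^{(\mu)}_{n}$ with $\mu\vdash n$.

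Finally I would show that $\bar{P}_{n}$ annihilates every block with $f\geqslant 1$. Each such block decomposes under $\mathbb{C}\Sn{n}$ into isotypic components on which $A_{n}$ acts by the scalars above; provided every such scalar is non-zero, it belongs to the extended spectrum used to define $\bar{P}_{n}$, and the matching factor $\big(1-\alpha^{-1}A_{n}\big)$ kills that component. Combined with the previous paragraph this gives $\bar{P}_{n}=\sum_{\mu\vdash n}z^{B_n}_{\mu}$, the splitting idempotent. The main obstacle, I expect, is precisely the non-vanishing claim for $f\geqslant 1$: in the tensorial regime it is supplied by Lemma~\ref{lem:KerA} via the Euclidean real-structure argument, which has no direct abstract counterpart. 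I would address it by exploiting the very semisimplicity criterion for $B_{n}(\delta)$, which excludes exactly those integer values of $\delta$ at which the Gram determinants of the standard modules $\Delta^{(\lambda)}_{n}$ degenerate; a vanishing scalar $(\delta-1)f+c(\mu\backslash\lambda)$ with $f\geqslant 1$ would have to be translated into a factor of such a Gram determinant, yielding a contradiction with semisimplicity and closing the proof.
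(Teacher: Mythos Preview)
Your overall plan coincides with the paper's: decompose the semisimple regular module into simples $M^{(\lambda)}_n$, refine into $\mathbb{C}\Sn{n}$-isotypics $L^{(\mu)}$, transport the eigenvalue formula of Lemma~\ref{lem:D_block_diagonal} (with $\varepsilon N$ replaced by $\delta$), and conclude that $\bar P_n$ kills every component on which $A_n$ has a value listed in the unrestricted spectrum while fixing the $f=0$ blocks. The paper organises the endgame slightly differently --- it first argues $\bar P_n J^{(1)}=0$ from the eigenvalue analysis and then deduces $J^{(1)}\bar P_n=0$ via $(\bar P_n)^{*}=\bar P_n$ and $(J^{(1)})^{*}=J^{(1)}$, rather than invoking Wedderburn directly --- but this is cosmetic.

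Where the two genuinely diverge is at the obstacle you flag. The paper does \emph{not} go through Gram determinants. For non-integer $\delta$ it declares non-vanishing immediate from the formula; for integer $\delta$ in the semisimple range ($\delta\geqslant n-1$ or $\delta\leqslant -2(n-1)$) it invokes the elementary content estimates of Appendix~\ref{app:KerA_proof_semisimple}: one bounds the extremal value of $c(\mu\backslash\lambda)$ over $\mu\in\mathrm{cl}^{(f)}(\lambda)$ by stacking the $2f$ added boxes as low (for $\varepsilon=1$) or as far right (for $\varepsilon=-1$) as possible, and a two-line inequality then shows $(\delta-1)f+c(\mu\backslash\lambda)$ keeps a definite sign once $\delta$ lies outside the non-semisimple window. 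This is a direct combinatorial bound, independent of any bilinear-form machinery.

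Your Gram-determinant route, by contrast, is a genuine gap as written. The implication you need --- that $(\delta-1)f + c(\mu\backslash\lambda)=0$ with $f\geqslant 1$ forces some cell-module form to degenerate --- is not available from the references at hand. That equality is the \emph{necessary} same-block condition of \cite[Proposition~4.2]{CDVM_Br_blocks} between the labels $\mu$ and $\lambda$, but it is not sufficient: satisfying it does not by itself produce a radical vector in any $\Delta^{(\sigma)}_n$, and the Rui semisimplicity criterion is phrased as an explicit integer list for $\delta$, not as such content equalities. To close the argument you would either need an extra lemma making this translation precise, or --- more economically --- replace this step by the direct content bound the paper uses.
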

\begin{proof}
    First, let us show that $\bar{P}_n\in Z(B_{n}(\delta))$. By construction, $\bar{P}_n \in C_{n}(\delta)$, so it commutes with $\mathbb{C}\Sn{n}\subset B_{n}(\delta)$. We will prove that $\bar{P}_n J^{(1)} = J^{(1)} \bar{P}_n = 0$, which will imply in turn that $\bar{P}_n$ commutes also with any diagram from $J^{(1)}$. 
    \vskip 4 pt
    
    The fact that $B_{n}(\delta)$ is semisimple means that the left regular $B_{n}(\delta)$-module decomposes as a direct sum of simple $B_{n}(\delta)$-modules $M^{(\lambda)}_n$ indexed by $\lambda \vdash n - 2f$ for all $f = 0,\dots, \lfloor \tfrac{n}{2} \rfloor$ (each module  occurring at least once). Upon restriction to $\mathbb{C}\Sn{n}$, occurrence of simple $\mathbb{C}\Sn{n}$-modules $L^{(\mu)}\subset M^{(\lambda)}_n$ is described by Lemma \ref{lem:restriction_decomposition}, which becomes the criterion due to simplicity of the standard modules (with all restrictions on the size of Young diagrams omitted).
    \vskip 4 pt
    
    The eigenvalue $0$ of $A_n$ occurs only for $f = 0$ in \eqref{eq:spec_master_class}. This is obvious for non-integer values of $\delta$ due to Lemma \ref{lem:D_block_diagonal}, while for integer ones the bounds of the semisimple regime of $B_{n}(\delta)$ allow one to apply the alternative proof of Lemma \ref{lem:KerA}, which is presented in Appendix \ref{app:proof_KerA_Br}. By construction, left multiplication of $B_{n}(\delta)$ by $\bar{P}_n$ annihilates exactly the subspaces with non-zero eigenvalues of $A_n$, so $\bar{P}_n J^{(1)} = 0$. To prove that $J^{(1)} \bar{P}_n = 0$ as well, note that $(J^{(1)})^{*} = J^{(1)}$. The fact that $(A_n)^{*} = A_n$ implies that $(\bar{P}_n)^{*} = \bar{P}_n$, so $(J^{(1)}\bar{P}_n)^{*} = \bar{P}_n J^{(1)} = 0$. As an immediate consequence, $(\bar{P}_n)^2 = \bar{P}_n$ because $A_n \in J^{(1)}$.
    \vskip 4 pt
    
    To sum up, $\bar{P}_n$ is a central idempotent. The fact that the eigenvalue $0$ of $A_n$ happens only for $f = 0$ implies that $\bar{P}_{n} B_{n}(\delta) = \bar{P}_{n}\,\mathbb{C}\Sn{n} \cong \mathbb{C}\Sn{n}$, so $\bar{P}_{n}$ splits the short exact sequence \eqref{eq:splitting_sequence_B}.
\end{proof}

\section{Summary: the traceless projector for $n=4$}\label{sec:summary}
Let us sum up the proposed approach of constructing the traceless projector by constructing the splitting idempotent $\bar{P}_n\in B_{n}(\delta)$ step by step. Throughout this section the parameter $\delta \neq 0$ is assumed to be generic complex, such that the algebra $B_{n}(\delta)$ is semisimple. To get the traceless projector \eqref{eq:projector}, one simply puts $\delta\mapsto \varepsilon N$ and ignores the elements of $\mathrm{spec}^{\times}(A_4)$ which either vanish or carry the sign $-\varepsilon$.

\paragraph{Eigenvalues of $A_4$.} To arrange the set of eigenvalues let us make use of the fact that 
\begin{equation}
\def\arraystretch{0.7}
    \mathrm{spec}^{\times}(A_4) = \mathrm{spec}^{\times}_{(4)}(A_n) \cup \mathrm{spec}^{\times}_{(3,1)}(A_n) \cup \mathrm{spec}^{\times}_{(2,2)}(A_n) \cup \mathrm{spec}^{\times}_{(2,1,1)}(A_n)\,,
\end{equation}
so one uses \eqref{eq:spec_reduced} defined in Section \ref{sec:traceless_GL}. One has 
\begin{equation}
\begin{aligned}
    \mathrm{spec}_{(4)}^{\times}(A_4) &=\lbrace{\delta+4\, ,\, 2(\delta+2)}\rbrace, \qquad \mathrm{spec}_{(3,1)}^{\times}(A_4) =\lbrace{\delta\,,\,\delta+2}\rbrace \qquad\\
    \mathrm{spec}_{(2,2)}^{\times}(A_4) &=\lbrace{\delta-2\,,\, 2(\delta-1)}\rbrace, \qquad 
    \mathrm{spec}_{(2,1,1)}^{\times}(A_4) =\lbrace{\delta-2}\rbrace\,,
\end{aligned}
\end{equation}
A this level one can have zeros for particular non-zero integer values $\delta\in \{-4,-2,1,2\}$, which mark exactly the non-semisimple regime of $B_{n}(\delta)$. Also for $\delta = 1$, $\mathrm{spec}_{(2,2)}^{\times}(A_4)$ and $\mathrm{spec}_{(2,1,1)}^{\times}(A_4)$ contain a negative element to be excluded from the consideration. Nevertheless we proceed by assuming that all the above elements enter $\mathrm{spec}^{\times}(A_4)$. In order to obtain the additive form of the projector we now need to expand the product of factors $\left(1-\alpha^{-1} A_4\right)$ over all $\alpha \in\mathrm{spec}^{\times}(A_4)$. To do so we take advantage of the algorithm described in section (\ref{sec:bracelets}).
\paragraph{Action of $A_4$ on $C_{4}(\delta)$ and expansion of the factorised form of $P_4$.} 
The second derivative of the elements in $\mathbb{C}[\mathfrak{b}(\mathcal{A})]$ relevant for the construction of $P_4$ reads: 
\begin{equation*}
\def\arraystretch{1.4}
    \begin{array}{lcl}
    \partial^2 \left[\bb{n}\bb{s}\right]\left[\bb{p}\right]\left[\bb{p}\right] = \left[\bb{n}\dd{s}\right]\left[\bb{p}\right]\left[\bb{p}\right] + 4\,\left[\bb{n}\db{s}\right]\left[\db{p}\right]\left[\bb{p}\right]+ 2\,\left[\bb{n}\bb{s}\right]\left[\db{p}\right]\left[\db{p}\right]\,, 
    & &
    \partial^2\left[\bb{n}\bb{p}\bb{s}\bb{p}\right] = \left[\bb{n}\bb{p}\dd{s}\bb{p}\right] + 4\, \left[\bb{n}\bb{p}\db{s}\db{p}\right] + 2\,\left[\bb{n}\db{p}\bb{s}\db{p}\right]\,,\\
    
    \partial^2\left[\bb{n}\bb{s}\right]\left[\bb{p}\bb{p}\right] = \left[\bb{n}\dd{s}\right]\left[\bb{p}\bb{p}\right] + 4\,\left[\bb{n}\db{s}\right]\left[\db{p}\bb{p}\right]+2\left[\bb{n}\bb{s}\right]\left[\db{p}\db{p}\right]\,,
    & &
    \partial^2\left[\bb{n}\bb{s}\right]\left[\bb{n}\bb{s}\right] =2\,(\left[\bb{n}\dd{s}\right]\left[\bb{n}\bb{s}\right]+\left[\bb{n}\db{s}\right]\left[\bb{n}\db{s}\right])\,,\\
    \partial^2\left[\bb{n}\bb{s}\bb{p}\right]\left[\bb{p}\right] = \left[\bb{n}\dd{s}\bb{p}\right]\left[\bb{p}\right] + 2\,(\,\left[\bb{n}\db{s}\db{p}\right]\left[\bb{p}\right]+\left[\bb{n}\db{s}\bb{p}\right]\left[\db{p}\right]+\left[\bb{n}\bb{s}\db{p}\right]\left[\db{p}\right]\,) \,, 
    & &\partial^2\left[\bb{n}\bb{s}\bb{n}\bb{s}\right] = 2\,(\left[\bb{n}\dd{s}\bb{n}\bb{s}\right] +\left[\bb{n}\db{s}\bb{n}\db{s}\right])\,,\\
    \partial^2\left[\bb{n}\bb{s}\bb{p}\bb{p}\right] = \left[\bb{n}\dd{s}\bb{p}\bb{p}\right] + 2\,(\left[\bb{n}\db{s}\db{p}\bb{p}\right]+\left[\bb{n}\db{s}\bb{p}\db{p}\right]+\left[\bb{n}\bb{s}\db{p}\db{p}\right])\,.\\
    \end{array}
\end{equation*}
Evaluating the trace $\tau$ \eqref{eq:trace_rule_1}-\eqref{eq:trace_rule_4} on the previous expressions and using Theorem \ref{thm:Laplace} yields: 

\begin{equation}\label{eq:Laplace_Br_4}
\begin{aligned}
    & A_n * e_{\left[\bb{n}\bb{s}\right]\left[\bb{p}\right]\left[\bb{p}\right]} = \delta\,e_{\left[\bb{n}\bb{s}\right]\left[\bb{p}\right]\left[\bb{p}\right]} + 4\,e_{\left[\bb{n}\bb{s}\bb{p}\right]\left[\bb{p}\right]}+e_{\left[\bb{n}\bb{s}\right]\left[\bb{n}\bb{s}\right]}\,,  \\
    &A_n *e_{\left[\bb{n}\bb{s}\right]\left[\bb{p}\bb{p}\right]} =\delta\,e_{\left[\bb{n}\bb{s}\right]\left[\bb{p}\bb{p}\right]}+ 4\,e_{\left[\bb{n}\bb{s}\bb{p}\bb{p}\right]}+e_{\left[\bb{n}\bb{s}\right]\left[\bb{n}\bb{s}\right]}\,,\\
    &A_n *e_{\left[\bb{n}\bb{s}\bb{p}\right]\left[\bb{p}\right]} =\left( \delta+1 \right)\, e_{\left[\bb{n}\bb{s}\bb{p}\right]\left[\bb{p}\right]} + e_{\left[\bb{n}\bb{s}\right]\left[\bb{p}\right]\left[\bb{p}\right]}+e_{\left[\bb{n}\bb{s}\bb{p}\bb{p}\right]}+e_{\left[\bb{n}\bb{p}\bb{s}\bb{p}\right]}+e_{\left[\bb{n}\bb{s}\bb{n}\bb{s}\right]}\,,\\
    &A_n *e_{\left[\bb{n}\bb{s}\bb{p}\bb{p}\right]} =\left(\delta+1\right)\, e_{\left[\bb{n}\bb{s}\bb{p}\bb{p}\right]} +e_{\left[\bb{n}\bb{s}\right]\left[\bb{p}\bb{p}\right]} +e_{\left[\bb{n}\bb{s}\bb{p}\right]\left[\bb{p}\right]} +e_{\left[\bb{n}\bb{p}\bb{s}\bb{p}\right]}+e_{\left[\bb{n}\bb{s}\bb{n}\bb{s}\right]}\,,\\
     &A_n *e_{\left[\bb{n}\bb{p}\bb{s}\bb{p}\right]} =\delta\, e_{\left[\bb{n}\bb{p}\bb{s}\bb{p}\right]} +2\, e_{\left[\bb{n}\bb{s}\bb{p}\right]\left[\bb{p}\right]} + 2\,e_{\left[\bb{n}\bb{s}\bb{p}\bb{p}\right]}+e_{\left[\bb{n}\bb{s}\right]\left[\bb{n}\bb{s}\right]}\,,\\
    &A_n *e_{\left[\bb{n}\bb{s}\right]\left[\bb{n}\bb{s}\right]} = \, 2\delta\, e_{\left[\bb{n}\bb{s}\right]\left[\bb{n}\bb{s}\right]} + 4\,e_{\left[\bb{n}\bb{s}\bb{n}\bb{s}\right]}\,,\\
    &A_n *e_{\left[\bb{n}\bb{s}\bb{n}\bb{s}\right]} = \,2(\delta+1)\,e_{\left[\bb{n}\bb{s}\bb{n}\bb{s}\right]}+ 2\,e_{\left[\bb{n}\bb{s}\right]\left[\bb{n}\bb{s}\right]}\,.
\end{aligned}
\end{equation}

Before expanding the factorized form of the projector we need to express the above expression in the normalized basis $\check{e}_{\zeta}=\dfrac{1}{\mathrm{st}(\zeta)}\,e_{\zeta}$ described in Lemma \ref{lem:sym}.
The turnover stability index of each basis element of $\mathbb{C}[\mathfrak{b}(\mathcal{A})]_4$ reads:
\begin{equation*}
\def\arraystretch{1.4}
    \begin{array}{llllc}
    \mathrm{st}(\left[\bb{n}\bb{s}\right]\left[\bb{p}\right]\left[\bb{p}\right])=4\,, & \mathrm{st}(\left[\bb{n}\bb{s}\right]\left[\bb{p}\bb{p}\right])=4\,,  & 
    \mathrm{st}(\left[\bb{n}\bb{s}\bb{p}\right]\left[\bb{p}\right])=1\,, &
    \mathrm{st}(\left[\bb{n}\bb{s}\bb{p}\bb{p}\right])=1\,, \\ \mathrm{st}(\left[\bb{n}\bb{p}\bb{s}\bb{p}\right])=2\,, &
    \mathrm{st}(\left[\bb{n}\bb{s}\right]\left[\bb{n}\bb{s}\right])=8\,,&
    \mathrm{st}(\left[\bb{n}\bb{s}\bb{n}\bb{s}\right])=4\,. &\\
    \end{array}
\end{equation*}
Therefore, in the normalized basis the relations \eqref{eq:Laplace_Br_4} transform into : 
\begin{equation}\label{eq:A_action_normalized}
\begin{aligned}
    & A_n * \check{e}_{\left[\bb{n}\bb{s}\right]\left[\bb{p}\right]\left[\bb{p}\right]} = \delta\,\check{e}_{\left[\bb{n}\bb{s}\right]\left[\bb{p}\right]\left[\bb{p}\right]} + \check{e}_{\left[\bb{n}\bb{s}\bb{p}\right]\left[\bb{p}\right]}+2 \, \check{e}_{\left[\bb{n}\bb{s}\right]\left[\bb{n}\bb{s}\right]}\,,  \\
    &A_n *\check{e}_{\left[\bb{n}\bb{s}\right]\left[\bb{p}\bb{p}\right]} =\delta\,\check{e}_{\left[\bb{n}\bb{s}\right]\left[\bb{p}\bb{p}\right]}+ \check{e}_{\left[\bb{n}\bb{s}\bb{p}\bb{p}\right]}+2\, \check{e}_{\left[\bb{n}\bb{s}\right]\left[\bb{n}\bb{s}\right]}\,,\\
    &A_n *\check{e}_{\left[\bb{n}\bb{s}\bb{p}\right]\left[\bb{p}\right]} =\left( \delta+1 \right)\, \check{e}_{\left[\bb{n}\bb{s}\bb{p}\right]\left[\bb{p}\right]} + 4\, \check{e}_{\left[\bb{n}\bb{s}\right]\left[\bb{p}\right]\left[\bb{p}\right]}+\check{e}_{\left[\bb{n}\bb{s}\bb{p}\bb{p}\right]}+2\, \check{e}_{\left[\bb{n}\bb{p}\bb{s}\bb{p}\right]}+4\, \check{e}_{\left[\bb{n}\bb{s}\bb{n}\bb{s}\right]}\,,\\
    &A_n *\check{e}_{\left[\bb{n}\bb{s}\bb{p}\bb{p}\right]} =\left(\delta+1\right)\, \check{e}_{\left[\bb{n}\bb{s}\bb{p}\bb{p}\right]} +4\,\check{e}_{\left[\bb{n}\bb{s}\right]\left[\bb{p}\bb{p}\right]} +\check{e}_{\left[\bb{n}\bb{s}\bb{p}\right]\left[\bb{p}\right]} +2\,\check{e}_{\left[\bb{n}\bb{p}\bb{s}\bb{p}\right]}+4\,\check{e}_{\left[\bb{n}\bb{s}\bb{n}\bb{s}\right]}\,,\\
     &A_n *\check{e}_{\left[\bb{n}\bb{p}\bb{s}\bb{p}\right]} =\delta\, \check{e}_{\left[\bb{n}\bb{p}\bb{s}\bb{p}\right]} + \check{e}_{\left[\bb{n}\bb{s}\bb{p}\right]\left[\bb{p}\right]} +\check{e}_{\left[\bb{n}\bb{s}\bb{p}\bb{p}\right]}+4\,\check{e}_{\left[\bb{n}\bb{s}\right]\left[\bb{n}\bb{s}\right]}\,,\\
    &A_n *\check{e}_{\left[\bb{n}\bb{s}\right]\left[\bb{n}\bb{s}\right]} = \, 2\delta\, \check{e}_{\left[\bb{n}\bb{s}\right]\left[\bb{n}\bb{s}\right]} + 2\,\check{e}_{\left[\bb{n}\bb{s}\bb{n}\bb{s}\right]}\,,\\
    &A_n *\check{e}_{\left[\bb{n}\bb{s}\bb{n}\bb{s}\right]} = \,2(\delta+1)\,\check{e}_{\left[\bb{n}\bb{s}\bb{n}\bb{s}\right]}+ 4\,\check{e}_{\left[\bb{n}\bb{s}\right]\left[\bb{n}\bb{s}\right]}\,,\\  
\end{aligned}
\end{equation}

Expanding pairs of factors of the projector sequentially starting from the right leads to the additive form of the projector
\begin{equation}\label{eq:projector4}
    P_4=1+\sum_{\zeta\;\;\text{monic monomials in}\;\;\mathbb{C}[\mathfrak{b}(\mathcal{A})]_4} a_{\zeta}\, \check{e}_{\zeta}
\end{equation}
with
\begin{equation*}
\def\arraystretch{1.8}
    \begin{array}{rclrcl}
    a_{\left[\bb{n}\bb{s}\right]\left[\bb{p}\right]\left[\bb{p}\right]} & = & -\,\dfrac{\delta^2(\delta+4)-4}{(\delta-2)\delta(\delta+2)(\delta+4)}\,, &
    a_{\left[\bb{n}\bb{s}\right]\left[\bb{p}\bb{p}\right]} & = &\dfrac{4}{(\delta-2)\delta(\delta+2)(\delta+4)}\,, \\
    a_{\left[\bb{n}\bb{s}\bb{p}\right]\left[\bb{p}\right]} & = & \dfrac{\delta+3}{(\delta-2)(\delta+2)(\delta+4)}\,, &
    a_{\left[\bb{n}\bb{s}\bb{p}\bb{p}\right]} & = & -\,\dfrac{1}{(\delta-2)(\delta+2)(\delta+4)}\,, \\
    a_{\left[\bb{n}\bb{p}\bb{s}\bb{p}\right]} & = & -\,\dfrac{2}{(\delta-2)\delta(\delta+4)}\,, &
    a_{\left[\bb{n}\bb{s}\right]\left[\bb{n}\bb{s}\right]} & = & \dfrac{\delta(\delta+3)+6}{(\delta-2)(\delta-1)(\delta+2)(\delta+4)}\,, \\
    \multicolumn{6}{c}{a_{\left[\bb{n}\bb{s}\bb{n}\bb{s}\right]}=-\,\dfrac{3\delta+2}{(\delta-2)(\delta-1)(\delta+2)(\delta+4)}\,.}
    \end{array}
\end{equation*}
The expressions of the splitting idempotent for $n=5$ and $n=6$ can be found in the appendix of \cite{KMP_central_idempotents}, and in the Mathematica notebook joined with this article where there is also the expression for $n=7$.

\section*{Acknowledgements}
We are grateful to X. Bekaert and N. Boulanger for valuable comments and advices during preparation of the manuscript. Y.G. and D.B. are much indebted to O.~V. Ogievetsky for enlightening and instructive discussions concerning diagram algebras and their representation theory. The work of Y.G. is supported by a joint grant ``50/50'' UMONS -- Universit\'e Fran\c{c}ois Rabelais de Tours. 

\begin{appendix}
    \section{Examples of traceless projections}\label{app:examples}
\paragraph{Example 7 (arbitrary hook tensors).}
For the partition $\mu=(m,\underbrace{1,...,1}_{n-m})$ with $m\geqslant 2$ one constructs $\mathrm{spec}^{\times}_{\mu}(A_n)$ by reconstructing the skew shape diagrams $\mu\backslash\lambda$ (with $|\mu\backslash\lambda| = 2f$) using the reverse jeu de taquin. The only starting tableaux are
\begin{displaymath}
\ytableausetup{mathmode,boxsize=0.9em,centertableaux}
\begin{ytableau}{\scriptstyle}
      \one &\one & \none[\scriptstyle{\cdots}]& \one & \times &\none[\scriptstyle{\cdots}]& \times \\
      \times\\
    \none[\svdots]\\
      \times\\
      \times\\
\end{ytableau}\qquad \text{(with $2f$ entries `1', for all $f = 1,\dots, \lfloor \tfrac{m}{2}\rfloor$),}
\end{displaymath}
which leads to the following possibilities:
\begin{displaymath}
\ytableausetup{mathmode,boxsize=0.9em,centertableaux}
\mu\backslash\lambda=
\begin{ytableau}{\scriptstyle}
      \times &\times & \none[\scriptstyle{\cdots}]& \times & &\none[\scriptstyle{\cdots}]&\\
      \times\\
    \none[\svdots]\\
      \times\\
      \\
\end{ytableau}
\quad\text{(for all $f = 1,\dots, \lfloor \tfrac{m}{2}\rfloor$)}
\qquad
\text{and}
\qquad
\mu\backslash\lambda=\begin{ytableau}{\scriptstyle}
      \times & \none[\scriptstyle{\cdots}]& \times & &\none[\scriptstyle{\cdots}]&\\
       \times\\
       \times\\
    \none[\svdots]\\
       \times\\
\end{ytableau} 
\quad\text{(for all $f = 1,\dots, \lfloor \tfrac{m-1}{2}\rfloor$)}
\end{displaymath}
\begin{equation}
\text{so,}\quad\mathrm{spec}_{(\mu)}^{\times}(A_n) =\left\{f\,\big(N+2\ \left(m-f\right)\big)-n  \right\} \  \cup \  \left\{f\,\big(N+2\ \left(m-1-f\right)\big) \right\}
\end{equation}
Then, the reduced projector \eqref{eq:projector_reduced}, is given by 
\begin{equation}\label{eq:projector_hook1}
    P_n^{(\mu)} = \displaystyle{\prod_{f=1}^{\lfloor \tfrac{m}{2}\rfloor}} \left(1 - \frac{1}{f(N+2\ (m-f))-n}\,A_n\right)\displaystyle{\prod_{f=1}^{\lfloor \tfrac{m-1}{2}\rfloor}} \left(1 - \frac{1}{f(N+2\ (m-1-f))}\,A_n\right)\,.
\end{equation}

\paragraph{Example 8 (The metric-affine conformal Weyl tensor)} In a Riemannian geometry $(\mathcal{M},g)$ the affine connection $\nabla$ on the tangent bundle of the manifold $\mathcal{M}$ is the Levi-Civita connection associated to the metric $g$. The Riemann tensor is defined with respect the Levi-Civita connection which depends on the metric only and as such we refer to it as the {\it metric Riemann tensor}. From the symmetry of the metric Riemann tensor on can easily conclude that it is a simple $GL$-module $V^{(2,2)}$. More in detail, for $R_{ab,cd} = g_{am} R^{m}{}_{b,cd}$ one has
\begin{equation}
    R_{ab,cd} + R_{ba,cd} = R_{ab,cd} + R_{ab,dc} = 0\,,\quad \text{and}\quad R_{ab,cd} + R_{bc,ad} + R_{ca,bd} = 0\quad \text{(the Bianchi identity)}\,.
\end{equation}
The trace decomposition of the Riemann tensor corresponds to the restriction of the representation $V^{(2,2)}$ to the (local) orthogonal group. For the spacetime dimensions $N\geqslant 4$, according to the Littlewood restriction rules \cite{Littlewood}, this leads to the metric Weyl tensor (the totally traceless projection $D^{(2,2)}$), the traceless Ricci tensor (single-trace projection $D^{(2)}$) and the scalar curvature (the double trace $D^{(0)}$), recall \eqref{eq:branching}. The expression for the metric Weyl tensor in the above decomposition is very well known \cite{weyl1918reine} (see also \cite[Chapter II]{eisenhart1997riemannian}). For the lower dimensions $N = 2,3$ the metric Weyl tensor vanishes identically (as the module $D^{(2,2)}$ does \cite{Weyl}, recall Theorem \ref{thm:reduced_projector}). Besides, the Littewood-Richardson restriction rule \cite{Littlewood} does not apply in this case, and the branching rules are no more expressed in terms of the Littlewood-Richardson coefficients (recall \eqref{eq:branching}). For $N = 3$ both the Ricci tensor and scalar curvature are present ({\it i.e.} the Riemann tensor can be expressed in terms of the Ricci tensor and the scalar curvature). For $N = 2$ only the scalar curvature enters the decomposition, despite $D^{(2)}$ is a non-trivial module in this case (this is due to the fact that locally, any metric in two dimensions is conformally flat).
\vskip 4 pt

In a metric-affine geometry $(\mathcal{M},\bar{\nabla},g)$ \cite{eisenhart1929non} the tangent bundle of the manifold $\mathcal{M}$ is endowed with a supplementary affine connection $\bar{\nabla}$. In full generality this connection does not preserve the metric under parallel transport $\bar{\nabla} g \neq 0 $ and has torsion: $\bar{\nabla}_X Y-\bar{\nabla}_Y X -[X,Y] \neq 0$ for two vector fields $X$ and $Y$. The non-metric Riemann tensor associated to  $\bar{\nabla}$ enjoys the only symmetry:
\begin{equation}
\bar{R}_{a,b,cd} + \bar{R}_{a,b,dc} = 0\,\quad  \text{where} \quad \bar{R}_{a,b,cd} = g_{am} \bar{R}^{m}{}_{b,cd}\;.
\end{equation}
Thus, the $GL$-irreducible components are given by the following plethysm, which is computed by applying the Littlewood-Richardson rule \eqref{eq:LRRule}:
\begin{equation}\label{eq:riemannGL}
\Yboxdim{9pt}\yng(1)\,\LRp\,\yng(1)\,\LRp\yng(1,1)\,=\yng(3,1)\,+\,\yng(2,2)\,+\,2\; \yng(2,1,1)\,+\,\yng(1,1,1,1) \;,\quad \text{where one assumes $N\geqslant  4$.}
\end{equation}
Upon restriction to $O(N)$, the right-hand-side of \eqref{eq:riemannGL} contains a significantly larger variety of components than in the metric case. Along with the lines of the present work, let us focus on the traceless part. It is natural to define the non-metric Weyl tensor as the totally traceless projection (with respect to the metric $g$) of the non-metric Riemann tensor:
\begin{equation}\label{eq:non-metric_Weyl}
    \bar{W}_{a,b,cd} + \bar{W}_{a,b,dc} = 0\,,\quad g^{ab}\bar{W}_{a,b,cd} = g^{ab}\bar{W}_{a,c,bd} = g^{ab}\bar{W}_{c,a,bd} = 0\,.
\end{equation}
Its explicit expression can be obtained by applying the projector $\mathfrak{P}_4$ whose step-by-step construction is described in Section \ref{sec:summary}. For the lower dimensions $N = 2,3$ one ignores the components whose traceless part vanishes identically. Namely, when $N=2$ none of the Young diagrams on the right-hand-side of \eqref{eq:riemannGL} has a non-trivial traceless projection (recall Theorem \ref{thm:reduced_projector}), so the non-metric conformal Weyl tensor vanishes identically, which is reminiscent of the metric case. For $N = 3$, the right-hand-side of \eqref{eq:riemannGL} (and thus the non-metric Riemann tensor) contains one $GL(3)$-irreducible component $(3,1)$ with a non-zero traceless projection. The latter can be constructed via Corollary \ref{cor:projector_reduced_sum}: for the index set $\mathcal{R} = \big\{(3,1),(2,2),(2,1^2)\big\}$ one constructs $\mathrm{spec}^{\times}_{(3,1)}(A_4) = \{3,5\}$, $\mathrm{spec}^{\times}_{(2,2)}(A_4) = \{1,4\}$, $\mathrm{spec}^{\times}_{(2,1^2)}(A_4) = \{1\}$, so $\mathrm{spec}^{\times}_{\mathcal{R}}(A_4) = \{1,3,4,5\}$ and one acts by the following operator on the components $\bar{R}_{a,b,cd}$:
\begin{equation}
    P^{(\mathcal{R})}_4 = \big(1 - A_4\big)\big(1 - \tfrac{1}{3}\,A_4\big)\big(1 - \tfrac{1}{4}\,A_4\big)\big(1 - \tfrac{1}{5}\,A_4\big)\,.
\end{equation}
Choosing the semi-additive form of the projector described in Corollary \ref{cor:projector_semi-sum} will yield the same result in a much more economical way: one uses the operator $P_4^{(3,1)}z^{(3,1)}$, where
\begin{align*}\label{eq:projector_hook}
    P_4^{(3,1)} &= \big(1 - \tfrac{1}{3}\,A_4\big)\big(1 - \tfrac{1}{5}\,A_4\big)\\
    &=\check{e}_{\left[\bb{p}\right]\left[\bb{p}\right]\left[\bb{p}\right]\left[\bb{p}\right]}+\dfrac{2}{15}\, \check{e}_{\left[\bb{n}\bb{s}\right]\left[\bb{n}\bb{s}\right]} + \dfrac{1}{15}\, \check{e}_{\left[\bb{n}\bb{s}\bb{p}\right]\left[\bb{p}\right]} - 
 \dfrac{1}{3}\, \check{e}_{\left[\bb{n}\bb{s}\right]\left[\bb{p}\right]\left[\bb{p}\right]}.
\end{align*}
\vskip 4 pt

In \cite[Section 4]{Alvarez:2016} the authors choose to define the non-metric Weyl tensor by utilising the non-metric Riemann tensor (as well as the corresponding symmetric Ricci tensor and scalar curvature) in the standard expression of the metric Weyl tensor. The resulting tensor is however not traceless. To justify this definition it is asserted that no modification of the expression of the metric Weyl tensor satisfying the properties \eqref{eq:non-metric_Weyl} exists. However, our analysis suggests that the sought modification does exist and is obtained via application of the traceless projector $\mathfrak{P}^{(\mathcal{R})}_{4}$ to the non-metric Riemann tensor. In particular, the non-metric Weyl tensor \eqref{eq:non-metric_Weyl} is non-zero whenever $N\geqslant 3$.

\section{Proofs}

\subsection{Proof of Lemma \ref{lem:KerA} continued}\label{app:KerA_proof_Sp}
\begin{proof}
    Let us consider the case of an antisymmetric metric on $V$. With the definition of an adjoint operator \eqref{eq:scalar_product_tensors_trace} at hand, one can check that $\mathfrak{r}(d_{ij})^{*} = \mathfrak{r}(d_{ij})$. To analyse the diagonal structure of the latter we make use of the canonical isomorphism $V \cong U^{*}\oplus U$, such that $ \langle \varphi + u , \psi + v\rangle = \varphi(v) - \psi(u)$. This induces block structure on the matrices of $\langle \,\cdot,\cdot\,\rangle$ and any $F\in\mathrm{End}(V)$:
    \begin{equation}
    \def\arraystretch{1.2}
        \langle \,\cdot,\cdot\,\rangle \sim
        \left(\begin{array}{ccc}
        0 & \vline & \mathbb{1} \\ \hline
        -\mathbb{1} & \vline & 0
    \end{array}\right)\,, \quad F \sim 
        \left(\begin{array}{ccc}
        A & \vline & B \\ \hline
        C & \vline & D
    \end{array}\right) \quad\quad \Rightarrow\quad 
        \quad F^{*} \sim 
        \left(\begin{array}{ccc}
        D^{t} & \vline & -B^{t}\\ \hline
        -C^{t} &\vline &  A^{t}
    \end{array}\right)\,,
    \end{equation}
    where $(\cdot)^{t}$ denotes the transpose of a matrix. The same structure holds when one considers the metric and operators on $V^{\otimes n}$, with the only difference that depending on $n$ the metric is either symmetric or anti-symmetric: for any $S,T\in V^{\otimes n}$ one has $\langle S,T \rangle = (-)^{n} \langle T,S \rangle$. Set $r = n\,(\mathrm{mod}\,2)$ and consider the space
    \begin{equation}\label{eq:tensors_even-odd}
        \mathcal{T}_{n}(V) = V^{\otimes n} \oplus V^{\otimes n-2} \oplus\dots\oplus V^{\otimes r}\quad \text{(by definition, $V^{\otimes 0} = \mathbb{C}$),}
    \end{equation}
    The generalisation of the above block structure for $\langle\,\cdot,\cdot\,\rangle$ and $F\in\mathrm{End}\big(\mathcal{T}_{n}(V)\big)$ reads
    \begin{equation}
    \def\arraystretch{1.2}
        \langle \,\cdot,\cdot\,\rangle \sim
        \left(\begin{array}{ccc}
        0 & \vline & \mathbb{1} \\ \hline
        (-)^{r}\mathbb{1} & \vline & 0
    \end{array}\right)\,, \quad F \sim 
        \left(\begin{array}{ccc}
        A & \vline & B \\ \hline
        C & \vline & D
    \end{array}\right) \quad\quad \Rightarrow\quad 
        \quad F^{*} \sim 
        \left(\begin{array}{ccc}
        D^{t} & \vline & (-)^{r}B^{t}\\ \hline
        (-)^{r} C^{t} &\vline &  A^{t}
    \end{array}\right)
    \end{equation}
    
    Fix a pair $i<j$. The space $\mathcal{T}_{n}(V)$ is preserved by $\mathrm{tr}^{(g)}_{ij}$, so in the sequel we identify $\mathrm{tr}^{(g)}_{ij}$ with its restriction to $\mathcal{T}_{n}(V)$. This allows us to have $\mathrm{tr}^{(g)*}_{ij}$ on $\mathcal{T}_{n}(V)$, such that $\mathfrak{r}(d_{ij}) = - \mathrm{tr}^{(g)*}_{ij}\mathrm{tr}^{(g)}_{ij}$. Upon a suitable reordering of the basis elements in $\mathcal{T}_{n}(V)$ one has the following block structure of $\mathrm{tr}^{(g)}_{ij}$:
    \begin{equation}
    \def\arraystretch{1.2}
        \mathrm{tr}^{(g)}_{ij} \sim \left(\begin{array}{ccc}
        0 & \vline & B \\ \hline
        (-)^{r} B & \vline & 0
    \end{array}\right)
    \quad\quad\Rightarrow\quad\quad
    \mathrm{tr}^{(g)*}_{ij} \sim \left(\begin{array}{ccc}
        0 & \vline & (-)^{r} B^{t} \\ \hline
        B^{t} & \vline & 0
    \end{array}\right)\,,\quad\quad \mathrm{tr}^{(g)*}_{ij}\mathrm{tr}^{(g)}_{ij} \sim \left(\begin{array}{ccc}
        B^{t}B & \vline & 0 \\ \hline
        0 & \vline & B^{t}B
    \end{array}\right)\,,
    \end{equation}
    where $B^{t}B$ is block-diagonal with respect to the decomposition of $\mathcal{T}_n(V)$ into direct summands. The transformation $B^{t}B$ is diagonalisable by an orthogonal matrix $S$ ({\it i.e.} such that $S^{t} = S^{-1}$). Therefore, $B^{t}B$ is also diagonalisable by the contragredient transformation $(S^{-1})^{t} = S$. As a result, both non-zero blocks of $\mathfrak{r}(d_{ij}) = - \mathrm{tr}^{(g)*}_{ij}\mathrm{tr}^{(g)}_{ij}$ (one being a transformation of $U$, and the other of $U^{*}$) are brought to the same diagonal form, all the eigenvalues being non-positive. The same conclusion is easily extended to the action of $A_n$ on $V^{\otimes n}$.
\end{proof}

\subsection{Alternative proof of Lemma \ref{lem:KerA} in the semisimple regime of $B_{n}(\varepsilon N)$}\label{app:KerA_proof_semisimple}
\begin{proof}
    Recall that the semisimple regime for $B_{n}(\varepsilon N)$ occurs for $N\geqslant n - 1$ when $\varepsilon = 1$ and $N\geqslant 2(n - 1)$ when $\varepsilon = -1$. Let us analyse the set in \eqref{eq:spec_master_class} without intersecting it with $\varepsilon \mathbb{N}$ and show that it nevertheless coincides with \eqref{eq:spec_master_class}. Denote $\alpha_{\mu\backslash\lambda}$ the eigenvalue from Lemma \ref{lem:D_block_diagonal} which corresponds to $L^{(\mu)} \subset M^{(\lambda)}_n$.
    \vskip 4 pt
    
    For $\varepsilon = 1$ let us concentrate on the minimal possible value of the content $c(\mu\backslash\lambda)$. First, note that because $\lambda^{\prime}_1 \leqslant |\lambda| = n - 2f$, in the semisimple regime $\lambda^{\prime}_1 + f \leqslant (n - 2f) + f \leqslant N$ when $f \geqslant 1$. So the minimal possible content among skew-shapes $\mu\backslash\lambda \in \mathrm{cl}^{(f)}_N(\lambda)$ is achieved when $\mu$ is obtained via adding two columns of $f$ boxes to the first and the second columns of $\lambda$. In this case $c(\mu\backslash\lambda) = -(\lambda^{\prime}_1 + \lambda^{\prime}_2 + f-2)f$, and the corresponding eigenvalue $\alpha_{\mu\backslash\lambda} = (N + 1 - f - \lambda^{\prime}_1 - \lambda^{\prime}_2)f$. Assuming that $\alpha_{\mu\backslash\lambda} \leqslant 0$ one gets $\lambda^{\prime}_1 + \lambda^{\prime}_2 + 2f\geqslant N+1+f$, and on the other hand $\lambda^{\prime}_1 + \lambda^{\prime}_2 + 2f \leqslant n$. By combining the two estimates one gets $N \leqslant n - 1 - f \leqslant n-2$, which is in contradiction with the semisimplicity of $B_{n}(N)$.
    \vskip 4 pt
    
    For $\varepsilon = -1$ we look for the maximal possible value of the content $c(\mu\backslash\lambda)$. Note that $\lambda_1 \leqslant |\lambda| = n - 2f$, so in the semisimple regime $\lambda_1 + 2f \leqslant \frac{N}{2} + 1 \leqslant N$ when $f \geqslant 1$ and $N\in 2\mathbb{N}$. So the maximal possible content among skew-shapes $\mu\backslash\lambda \in \mathrm{cl}^{(f)}_N(\lambda)$ is achieved when $\mu$ is obtained by adding a row of $2f$ boxes to the first row of $\lambda$. In this case $c(\mu\backslash\lambda) = (2\lambda_1 + 2f - 1)f$, and the corresponding eigenvalue $\alpha_{\mu\backslash\lambda} = (N + 1 - f - \lambda^{\prime}_1 - \lambda^{\prime}_2)f$. Assuming that $\alpha_{\mu\backslash\lambda} \geqslant 0$ one gets $\frac{N}{2} \leqslant \lambda_1 + f - 1$, and on the other hand $\lambda_1 + 2f \leqslant n$. By combining the two estimates one gets $\frac{N}{2} \leqslant n - 1 - f \leqslant n-2$, which is in contradiction with the semisimplicity of $B_{n}(-N)$.
    \vskip 4 pt

    To this end, we conclude that the only possibility to have $\alpha_{\mu\backslash\lambda} = 0$ occurs for $f = 0$, so $\lambda = \mu$. This corresponds to $D^{(\hat{\lambda})}\otimes M^{(\lambda)}_n$ in the decomposition \eqref{eq:Schur-Weyl} with $|\lambda| = n$, which are annihilated by all $d_{ij}$. So $\mathrm{Ker}\,A_n$ is indeed the subspace of traceless tensors.
\end{proof}

\subsection{Proof of Corollary \ref{cor:KerA_Br}}\label{app:proof_KerA_Br}
According to the conditions in the assertion, Schur-Weyl duality applies, so for $L^{(\mu)}\subset M^{(\lambda)}_n$ the first part of the assertion holds by Lemmas \ref{lem:KerA} and \ref{lem:D_block_diagonal}. For the second part consider $L^{(\mu)} \subset \Delta^{(\lambda)}_n$ upon restriction to $\mathbb{C}\Sn{n}$, such that
\begin{equation}\label{eq:A_zero_in_Delta}
    \alpha_{\mu\backslash\lambda} = (\delta - 1) \,\frac{|\mu| - |\lambda|}{2} + c(\mu\backslash\lambda)\,.
\end{equation}
The above expression can be interpreted as $A_n L^{(\mu)} = 0$ along the following lines. Choose a basis in $\Delta^{(\lambda)}_n$ such that any element of $B_{n}(\delta)$ is represented by a block-upper-triangular matrix in $\mathrm{End}\big(\Delta^{(\lambda)}_n\big)$, with the composition factors $M^{(\sigma_i)}_n$ (with $1\leqslant i\leqslant r$) appearing on the diagonal (see \cite[paragraph below Corollary 2.3]{CDVM_Br_blocks}). There is exactly one matrix block with $\sigma_1 = \lambda$, while for the others (when $i\neq 1$) holds $|\sigma_i| > |\lambda|$. By \cite[Proposition 4.2]{CDVM_Br_blocks}, one has 
\begin{equation}\label{eq:embedding_condition}
    (\delta - 1) \,\frac{|\sigma_i| - |\lambda|}{2} + c(\sigma_i\backslash\lambda) = 0\,.
\end{equation}
The embedding $L^{(\mu)} \subset \Delta^{(\lambda)}_n$ is realised via combinations of vectors from modules $L^{(\mu)}\subset M^{(\sigma_{i_p})}_n$ (for a subset of indicies $1 \leqslant i_1 < \dots < i_{\ell}\leqslant r$). By Lemma \eqref{lem:D_block_diagonal}, the eigenvalue of $A_n$ on each embedding is
\begin{equation}
    (\delta - 1) \frac{|\mu| - |\sigma_i|}{2} + c(\mu\backslash\sigma_i) = (\delta - 1) \frac{|\mu| - |\lambda|}{2} + c(\mu\backslash\lambda) = \alpha_{\mu\backslash\lambda}
\end{equation}
(where we have used \eqref{eq:embedding_condition}). Therefore $A_n$ is proportional to identity on any embedding $L^{(\mu)}\subset \Delta^{(\lambda)}_n$ with the same eigenvalue $\alpha_{\mu\backslash\lambda}$ \eqref{eq:A_zero_in_Delta}. Due to Lemma \ref{lem:KerA}, $L^{(\mu)}$ can not appear in $M^{(\lambda)}_n$ if $\alpha_{\mu\backslash\lambda} < 0$ or $\alpha_{\mu\backslash\lambda} = 0$ with $\lambda \neq \mu$, and thus any its occurrence in $\Delta^{(\lambda)}_n$ is factored out due to $M^{(\lambda)} \cong \Delta^{(\lambda)}_n\slash K$.

\subsection{Proof of Lemma \ref{lem:trace_rules}}\label{app:proof_lem_rules}
    \begin{proof}
    The diversity of expressions on the left-hand-sides of the rules \eqref{eq:trace_rule_1}-\eqref{eq:trace_rule_4} which are necessary to fix $\tau$, is essentially limited by $\mathfrak{b}(\mathcal{A})$-linearity. Representatives of a particular form on the left-hand-sides of the rules \eqref{eq:trace_rule_1}, \eqref{eq:trace_rule_2} are always accessible via cyclic permutations. Taking into account the inversion, the same conclusion is valid for the rules in \eqref{eq:trace_rule_3}, \eqref{eq:trace_rule_4} due to the following simple facts.
        \paragraph{Fact 1.} For $[\db{p}u] = [\db{p}I(u)]$ either $u$ or $I(u)$ is fit because $[\bb{p}u]\in \mathfrak{b}(\mathcal{A})$.
        \paragraph{Fact 2.} For $[\db{p}u\db{s}v] = [\db{p}I(v)\db{s}I(u)]$ either $u$ or $I(v)$ is fit. Indeed, since $[\bb{p}u\bb{s}v]\in \mathfrak{b}(\mathcal{A})$, either one of the bracelets $[u]$, $[v]$ is in $\mathfrak{b}(\mathcal{A})$ or one of them is empty.
        \paragraph{Fact 3.} For $[\db{p}u\db{s}v] = [\db{p}I(v)\db{s}I(u)]$ either $[u],[v]\notin \mathfrak{b}(\mathcal{A})$ or $[u],[v]\in \mathfrak{b}(\mathcal{A})$. In the former case either $|u|_{\bb{n}} > |u|_{\bb{s}}$ or $|v|_{\bb{n}} > |v|_{\bb{s}}$, while in the latter $u,v$ or $I(u), I(v)$ are both fit.
    Note that the admissible representatives on the left-hand-sides of the rules \eqref{eq:trace_rule_3} are fixed unambiguously. 
    \vskip 4 pt
    
    Representatives on the left-hand-sides of \eqref{eq:trace_rule_1}-\eqref{eq:trace_rule_4} are sufficient to define $\tau$. Let us check that the definition is independent of a particular choice among admissible representatives.
    \begin{itemize}
        \item[{\it 1)}] For the rule \eqref{eq:trace_rule_1}, the only alternative representative is $[\dd{s} I(u)]$. Application of \eqref{eq:trace_rule_1} leads to $\tau\big([\dd{s} I(u)]\big) = 2\delta\,[\bb{s} I(u)]$. But the latter equals to $\tau\big([\dd{s} u]\big)$ by inversion of the representative combined with a cyclic permutation.
        
        \item[{\it 2)}] For the first rule in \eqref{eq:trace_rule_2}, the equivalent representatives are $[\db{s}v\db{s}u] = [\db{s}I(u)\db{s}I(v)] = [\db{s}I(v)\db{s}I(u)]$. By direct application of \eqref{eq:trace_rule_2} one calculates $\tau\big([\db{s}v\db{s}u]\big) = 2\big([\db{s}v\db{s}I(u)] + [\db{s}v][\db{s}u]\big)$. But $[\db{s}v\db{s}I(u)] = [\db{s}u\db{s}I(v)]$ (the two representatives are related by inversion followed by a cyclic permutation), which reproduces $\tau\big([\db{s}u\db{s}v]\big)$. Other alternatives are analysed along the same lines.
        
        For the second rule in \eqref{eq:trace_rule_2}, one has to analyse $[\db{s}v][\db{s}u]$, together with the alternatives $[\db{s}I(u)]$ and $[\db{s}I(v)]$ for each factor. By direct application of \eqref{eq:trace_rule_2} one gets $\tau\big([\db{s}v][\db{s}u]\big) = 2\big([\db{s}v\db{s}u] + [\db{s}v\db{s}I(u)]\big)$. But $[\db{s}v\db{s}u] = [\db{s}u\db{s}v]$ (by a cyclic permutation) and $[\db{s}v\db{s}I(u)] = [\db{s}u\db{s}I(v)]$ (by composition of inversion and a cyclic permutation), which leads to $\tau\big([\db{s}u][\db{s}v]\big)$. Consider also $\tau\big([\db{s}u][\db{s}I(v)]\big) = 2\big([\db{s}u\db{s}I(v)] + [\db{s}u\db{s}v]\big)$ which equals $\tau\big([\db{s}u][\db{s}v]\big)$ directly. Other alternatives are analysed along the same lines.
        
        \item[{\it 3)}] For the first rule in \eqref{eq:trace_rule_4}, let $u,v$ are fit. Then for the alternative representative one has $\tau\big([\db{p}v\db{p}u]\big) = [\bb{n}v\bb{s}I(u)] = [\bb{n}u\bb{s}I(v)]$. Else, let $|u|_{\bb{s}} > |u|_{\bb{n}}$. Then one checks $\tau\big([\db{p}I(u)\db{p}I(v)]\big) = [\bb{n}I(u)][\bb{s}I(v)] = [\bb{n}u][\bb{s}v]$.
        
        For the second rule in \eqref{eq:trace_rule_4} one considers $\tau\big([\db{p}v][\db{p}u]\big) = [\bb{n}v\bb{s}I(u)] = [\bb{n}u\bb{s}I(v)]$.
    \end{itemize}
\end{proof}

\subsection{Proof of Theorem \ref{thm:Laplace}}\label{app:proof_Laplace}
\begin{proof}
The main idea consists in analysing the following formula which follows from the definition of the average \eqref{eq:average}: for any diagram $b\in B_{n}(\delta)$
\begin{equation}
    A_n\gamma_b = \gamma_{(A_n b)}\,.
\end{equation}
The product $A_n b = \sum_{1\leqslant i < j\leqslant n} d_{ij} b$ consists in summing over all possibilities of placing an arc at a pair of lower nodes of $b$, which is mimicked by imposing Leibniz rule in the definition of $\partial$. Without loss of generality, we assume a convenient representative $b$ in the conjugacy class, such that all bracelets in $\Phi(\gamma_b)$ come from mutually non-intersecting cycles, and such that each independent cycle is either a cycle permutation, or, if $b\in J^{(1)}$, there are no intersections among the vertical lines, while each arc in the upper or lower row is incident to a pair of nodes $(i,i+1)$ (for $i \leqslant n-1$) or $(1,n)$ (for example, see \eqref{eq:convenient_representative}). The independent cycles will be schematically illustrated by rectangular blocks, with only particular significant lines specified explicitly. For example, 
    \begin{equation}
    \raisebox{-.4\height}{\includegraphics[scale=0.4]{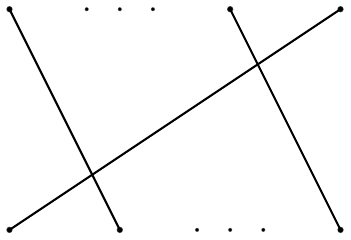}}\rightarrow \raisebox{-.4\height}{\includegraphics[scale=0.4]{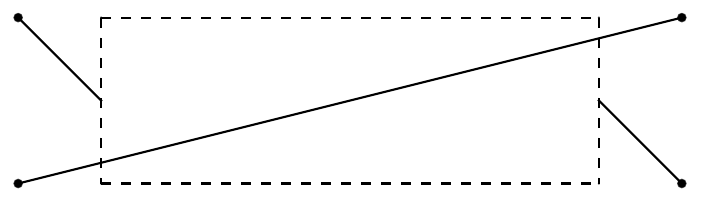}} \;, \hspace{2cm} \raisebox{-.4\height}{\includegraphics[scale=0.4]{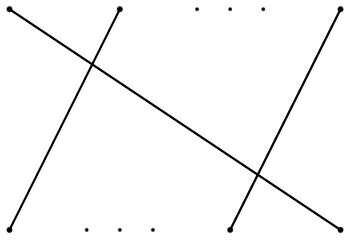}}\rightarrow \raisebox{-.4\height}{\includegraphics[scale=0.4]{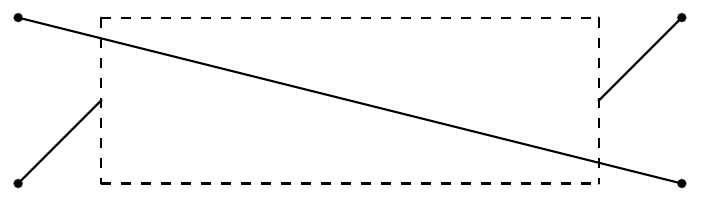}}\;,
    \end{equation}
        \begin{equation}
    \raisebox{-.4\height}{\includegraphics[scale=0.4]{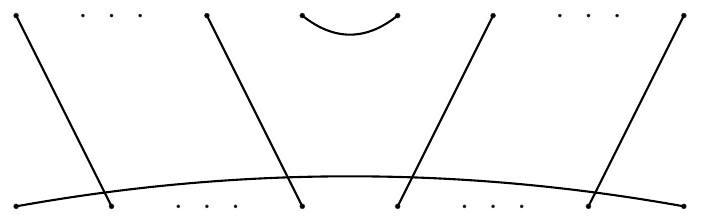}}\rightarrow \raisebox{-.4\height}{\includegraphics[scale=0.4]{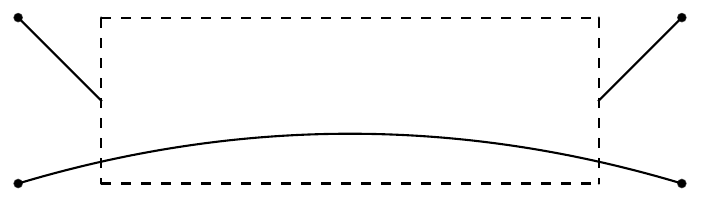}}\;, \hspace{1cm}
    \raisebox{-.4\height}{\includegraphics[scale=0.4]{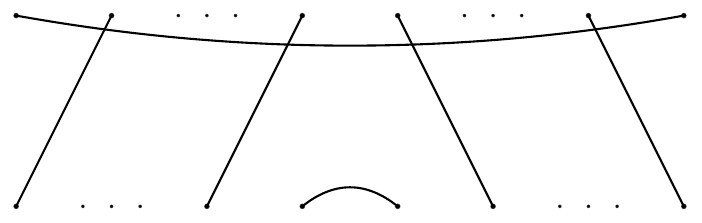}}\rightarrow \raisebox{-.4\height}{\includegraphics[scale=0.4]{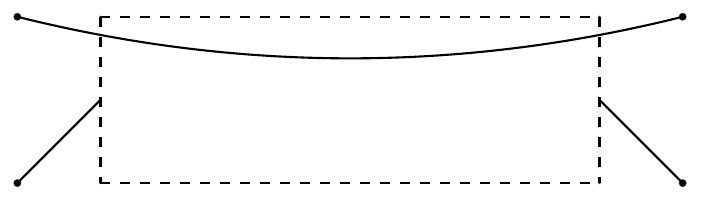}}
    \end{equation}
There are two major cases how one can place the arc upon multiplication by $d_{ij}$: either it  joins two nodes within a single cycle, or the two nodes belong to two different cycles. Only the bracelets coming from these blocks will be affected by multiplication by $d_{ij}$, so all other blocks can be ignored while analysing each particular $i<j$. This property is manifested in the definition of $\tau$ as a $\mathbb{C}[\mathfrak{b}(\mathcal{A})]$-linear operation. We will work in terms of particular representatives in the bracelets, and read off the resulting representatives coming form the initial ones. To fix a representative, we will highlight the starting point by a circle around a node, such that one starts reading along the adjacent line (if there is a diagram attached below, one ignores it). We always imply the word $u$ read off by following the lines hidden behind the first block, while for the second block (if any) the word is $v$. Additional arrows entering each block fix the direction of reading which leads to the corresponding word. Passing the block in the opposite direction leads to $I(u)$ instead of $u$ and $I(v)$ instead of $v$. For example, from the following schema one reads off the representative $[\bb{n}u\bb{s}I(v)]$:
\begin{equation}
\raisebox{-.4\height}{\includegraphics[scale=0.7]{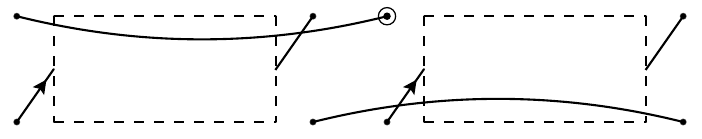}}
\end{equation}

Each letter in a word parametrising a bracelet corresponds to a line with two endpoints. Upon multiplication $d_{ij} b$, put $0$, $1$ or $2$ dots above a letter in each representative depending on how many endpoints are occupied by the $(i,j)$-arc attached to the lower row of $b$. It is clear that $\bb{n}$ can never acquire a dot, $\bb{p}$ can acquire at most $1$ dot, while $\bb{s}$ can acquire up to $2$ dots. This is exactly encoded in the definition of $\partial$ by putting $\partial(\bb{n}) = \partial(\db{p}) = \partial(\dd{s}) = 0$.  Without loss of generality, one can consider the following schemas for the possibilities of attaching the arc.
\begin{itemize}
    \item[{\it 1)}] The arc is attached to another arc, which leads to a cycle:
    \begin{equation}
\begin{aligned}
\tau_n\big(\left[\dd{s}u\right]\big) \;\;=\;\; \hspace{0.1cm}\raisebox{-.4\height}{\includegraphics[scale=0.4]{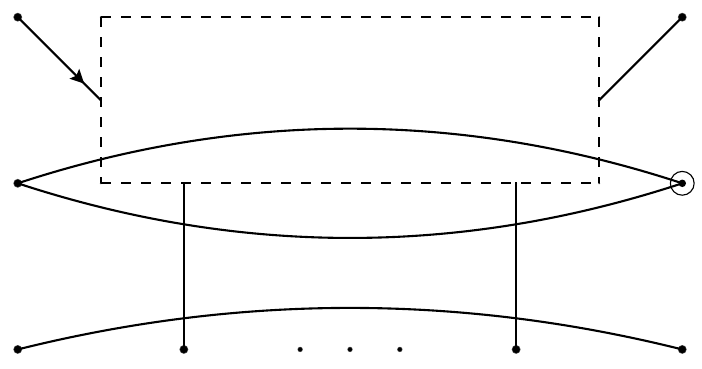}}\;\;=\;\;\delta\;  \raisebox{-.4\height}{\includegraphics[scale=0.4]{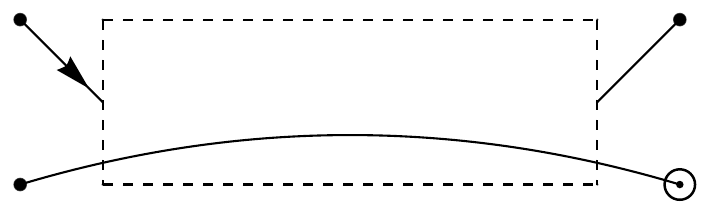}} \;\;=\;\; \delta\, [\bb{s}u]\,,
\end{aligned}    
\end{equation}
so one arrives at the rule \eqref{eq:trace_rule_1}.

\item[{\it 2)}] The arc is attached to one of the endpoints of two particular arcs of $b$. One sums over the four possibilities in this case:
\begin{equation}
\def\arraystretch{2}
\begin{array}{ll}
     \tau\big(\left[\db{s}u\db{s}v\right]\big) & =\;\;\hspace{0.1cm}\raisebox{-.4\height}{\includegraphics[scale=0.6]{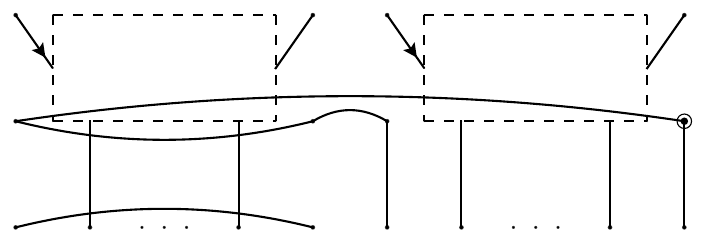}}\;+\;\raisebox{-.4\height}{\includegraphics[scale=0.6]{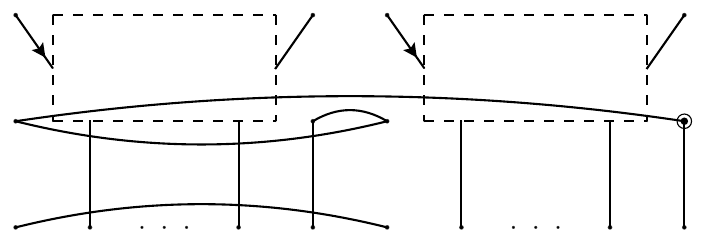}}\;+ \\
    \hfill &\,\hspace{0.6cm}\raisebox{-.4\height}{\includegraphics[scale=0.6]{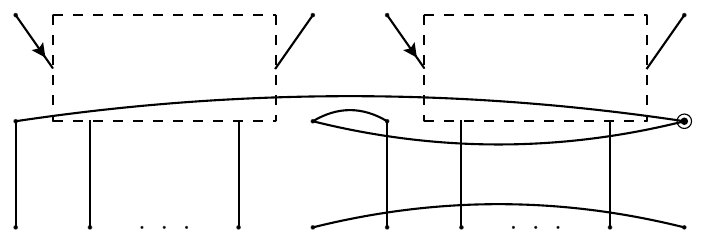}}\;+\;\raisebox{-.4\height}{\includegraphics[scale=0.6]{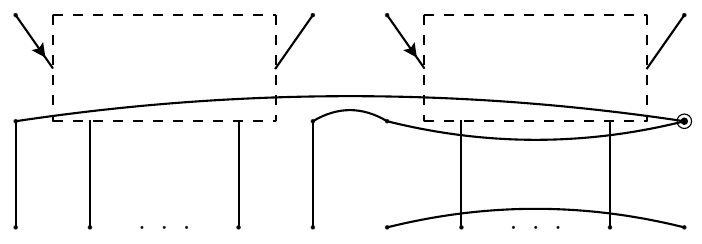}}\\
    \multicolumn{2}{l}{ = \;2\,\raisebox{-.4\height}{\includegraphics[scale=0.6]{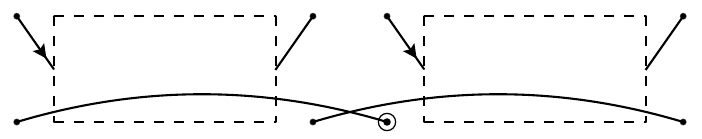}}\; + \;2\,\raisebox{-.4\height}{\includegraphics[scale=0.6]{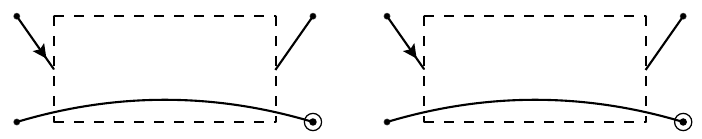}}\;=\;2\,[\bb{s}u\bb{s}I(v)] + 2\,[\bb{s}u][\bb{s}v]\,,}
\end{array}    
\end{equation}
so one reproduces the first rule in \eqref{eq:trace_rule_2}.

\item[{\it 3)}] Next, let us consider the case when one endpoint of the arc is attached to a passing line, while the other one occupies one of the endpoints of a particular arc in $b$. Then one sums over the two possibilities, where the structure of the representatives assumes that $u$ is fit
\begin{equation}
\begin{array}{ll}
    \tau\big(\left[\db{p}u\db{s}v\right]\big) & =\hspace{0.1cm} \raisebox{-.4\height}{\includegraphics[scale=0.6]{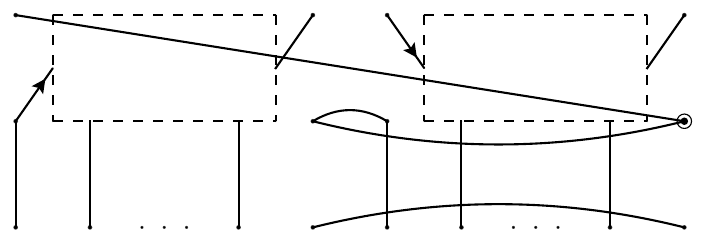}}  \;+\; \raisebox{-.4\height}{\includegraphics[scale=0.6]{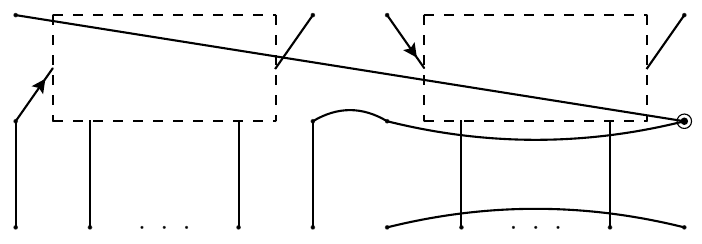}}\\
    \multicolumn{2}{l}{= \raisebox{-.4\height}{\includegraphics[scale=0.6]{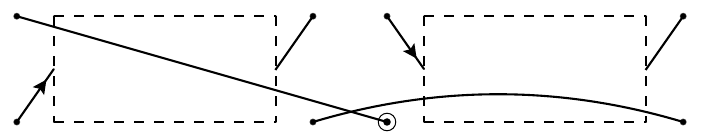}}\;+\;\raisebox{-.4\height}{\includegraphics[scale=0.6]{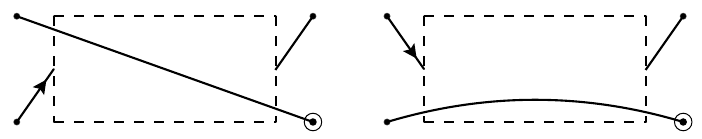}} = [\bb{p}u\bb{s}I(v)]\;+\;[\bb{p}u][\bb{s}v]\,,}
\end{array}    
\end{equation}
so one recovers the first rule in \eqref{eq:trace_rule_3}.
\item[{\it 4)}] Finally, the arc can occupy the lower endpoints of two passing lines. First, suppose that $u$ is fit (so is $v$), which leads to
\begin{equation}
\begin{aligned}
    \tau\big(\left[\db{p}u\db{p}v\right]\big) & =\hspace{0.1cm}\raisebox{-.4\height}{\includegraphics[scale=0.6]{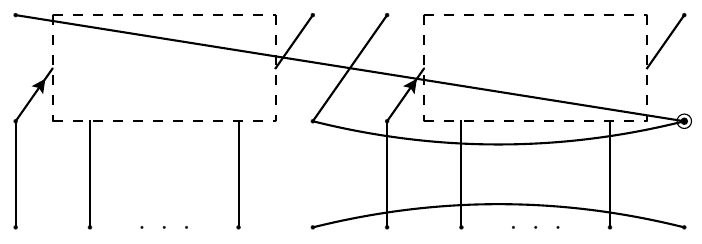}}\;=\;\hspace{0.1cm}\,\raisebox{-.4\height}{\includegraphics[scale=0.6]{theo3_f_r1fit.pdf}}\;=\; [\bb{n}u\bb{s}I(v)]\,,
\end{aligned}    
\end{equation}
in agreement with the first rule in \eqref{eq:trace_rule_4} for the case of a fit representative. In the other case, when neither $u$ nor $v$ is not fit, with $|u|_{\bb{s}} > |u|_{\bb{n}}$, one has
\begin{equation}
\begin{aligned}
    \tau\big(\left[\db{p}u\db{p}v\right]\big) & =\;\hspace{0.1cm}\raisebox{-.4\height}{\includegraphics[scale=0.6]{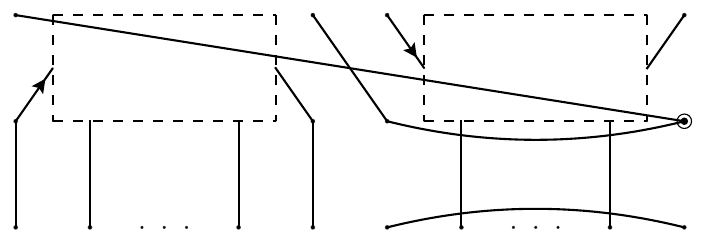}}\;=\;\hspace{0.1cm}\,\raisebox{-.4\height}{\includegraphics[scale=0.6]{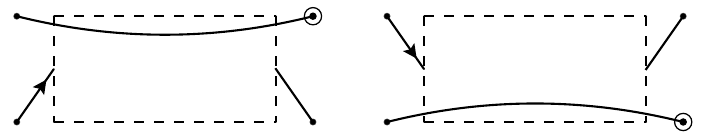}}\;=\; [\bb{n}u][\bb{s}v]\,,
\end{aligned}    
\end{equation}
which is again in agreement with the first rule in \eqref{eq:trace_rule_4}. 
\end{itemize}
We are left with the cases when the arc connects two independent cycles.
\begin{itemize}
    \item[{\it 5)}] If the arc occupies the endpoints of two particular arcs in $b$, one sums over four possibilities
    \begin{equation}
    \def\arraystretch{2}
    \begin{array}{ll}
    \tau\big(\left[\db{s}u\right]\left[\db{s}v\right]\big) = & \;\raisebox{-.4\height}{\includegraphics[scale=0.6]{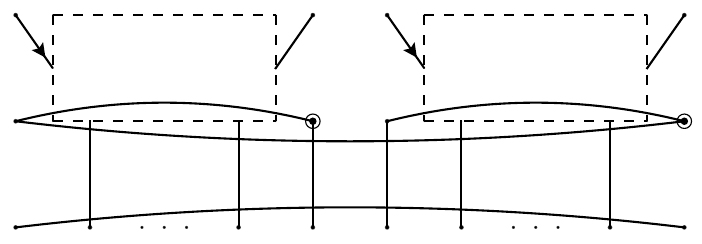}}\;+\;\raisebox{-.4\height}{\includegraphics[scale=0.6]{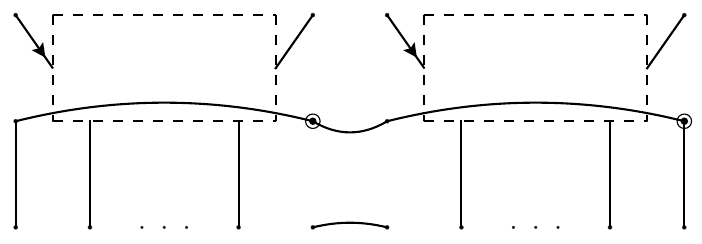}}\\
    \hfill & + \;\raisebox{-.4\height}{\includegraphics[scale=0.6]{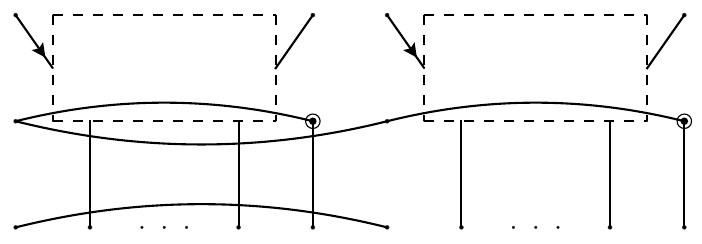}}\;+\;\raisebox{-.4\height}{\includegraphics[scale=0.6]{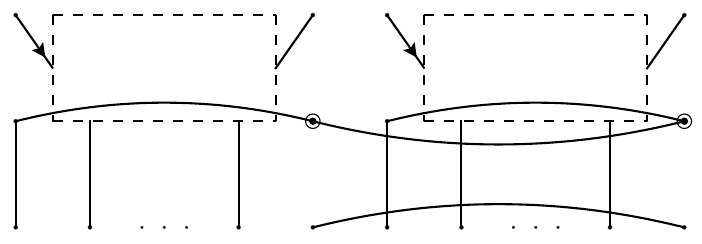}} \\
    \multicolumn{2}{l}{=\hspace{0.1cm}\,2\,\raisebox{-.4\height}{\includegraphics[scale=0.6]{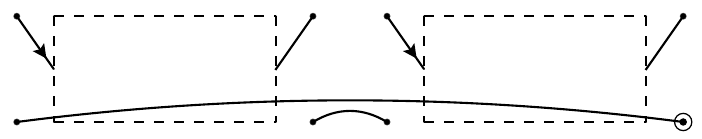}}\,+\,2\,\raisebox{-.4\height}{\includegraphics[scale=0.6]{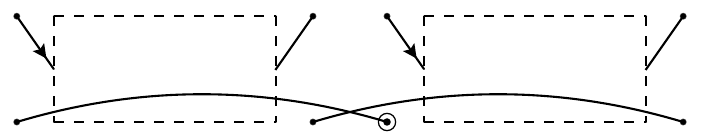}}\;=\; 2\, [\bb{s}u\bb{s}v]\;+\;2\, [\bb{s}u\bb{s}I(v)]\,,}
    \end{array}
\end{equation}
which reproduces the second rule in \eqref{eq:trace_rule_2}.
\item[{\it 6)}] If the arc occupies the passing line in one cycle and an endpoint of an arc in the other cycle of $b$, one sums over two possibilities ($u$ is fit)
\begin{equation}
\def\arraystretch{2}
\begin{array}{ll}
    \tau\big(\left[\db{p}u\right]\left[\db{s}v\right]\big) = & \raisebox{-.4\height}{\includegraphics[scale=0.6]{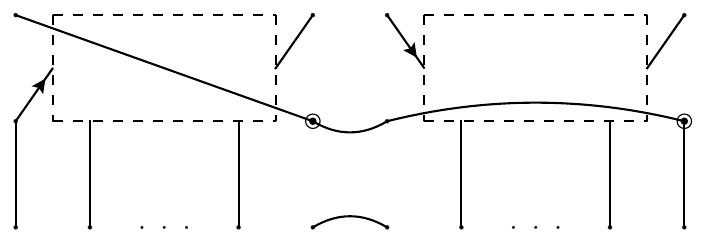}}\;+\;\raisebox{-.4\height}{\includegraphics[scale=0.6]{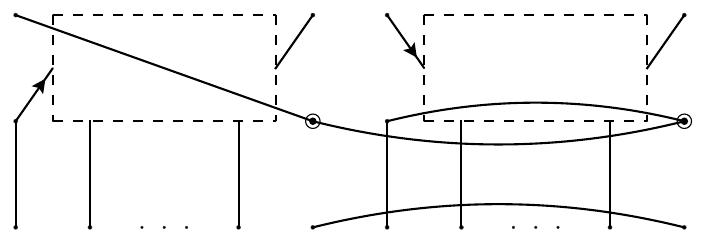}} \\
    \multicolumn{2}{l}{=\hspace{0.1cm}\,\raisebox{-.4\height}{\includegraphics[scale=0.6]{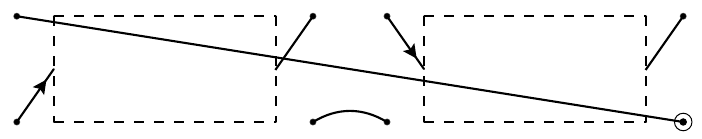}}\,+\,\raisebox{-.4\height}{\includegraphics[scale=0.6]{theo3_e_r2.pdf}}\; = \; [\bb{p}u\bb{s}v] \;+\; [\bb{p}u\bb{s}I(v)]\,,}
\end{array}    
\end{equation}
in agreement with the second rule in \eqref{eq:trace_rule_3}.
\item[{\it 7)}] Finally, if the arc occupies two passing lines in two independent cycles of $b$ (with both $u,v$ fit) one has
\begin{equation}
\begin{aligned}
    \tau\big(\left[\db{p}u\right]\left[\db{p}v\right]\big) & = \raisebox{-.4\height}{\includegraphics[scale=0.6]{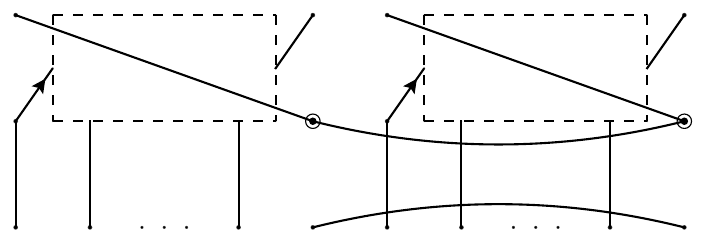}}\;=\;\hspace{0.1cm}\,\raisebox{-.4\height}{\includegraphics[scale=0.6]{theo3_f_r1fit.pdf}}\;=\; [\bb{n}u\bb{s}I(v)]\,,
\end{aligned}    
\end{equation}
which coincides with the second rule in \eqref{eq:trace_rule_4}.
\end{itemize}
Due to Lemma \ref{lem:trace_rules}, the considered cases are sufficient to prove the assertion.
\end{proof}

\subsection{Proof of Lemma \ref{lem:sym}}\label{app:lemma_sym}
\begin{proof}
    Let us first prove the assertion for $b\in B_{n}(\delta)$ such that $\zeta = \Phi(\gamma_b)$ is a single bracelet. Let us show that upon a convenient choice of $b$ among the conjugate diagrams, any $t \in C_{\Sn{n}}(b)$ can be written as $t = c^{m} r^{p}$ for some $m = 0,\dots, n-1$ and $p = 0,1$, where
    \begin{equation}
    c\; =\; \raisebox{-.4\height}{\includegraphics[scale=0.6]{c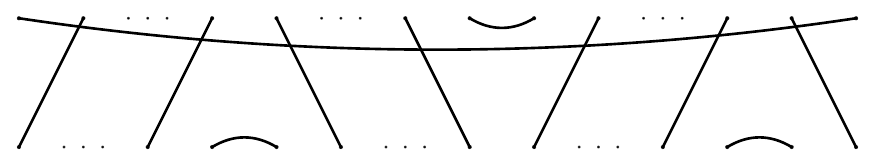}}\;, \hspace{3cm} r\;= \;\raisebox{-.4\height}{\includegraphics[scale=0.6]{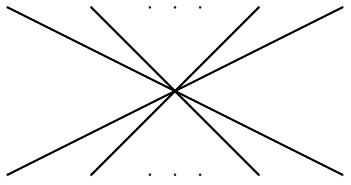}}
    \end{equation}
    (the fact that $c = r$ for $n = 2$ does not lead to any problem). Let us say that two lines in a diagram are {\it adjacent} if there exists a pair of vertically aligned nodes which are endpoints of these lines, {\it i.e.} these endpoints are identified by the step 2 in the definition of the map $\Phi$ in Section \ref{sec:classes-bracelets_map}. For any $s\in \Sn{n}$, conjugation $b\to sb s^{-1}$ preserves the adjacency relation, which is exactly the property encoded by the bracelet $\Phi(\gamma_b)$ (by placing letters along an unoriented circle, one exactly defines the nearest neighbours). If a conjugation transformation preserves the diagram (with $t\in C_{\Sn{n}}(b)$), then the  result of such transformation can be described as follows: each passing line takes the place of another passing line and each upper (respectively, lower) arc takes the place of another upper (respectively, lower) arc. Without loss of generality, take the convenient diagram in the conjugacy class: $b = c$ when $b\notin J^{(1)}$, while for $b\in J^{(1)}$ take
    \begin{equation}\label{eq:convenient_representative}
    b=\hspace{0.1cm}\raisebox{-.4\height}{\includegraphics[scale=0.7]{sym.pdf}}\;.
    \end{equation}
    In view of preservation of adjacency of lines, the particularly chosen diagram $b$ can by preserved by adjoint transformations composed only by cyclic permutations of nodes and inversion. 
    \vskip 5 pt
    
    With this at hand, let us establish the bijection between the centraliser $C_{\Sn{n}}(b)$ and the turnover stabiliser $S(w_b)$ for a particular representative $w_b\in \zeta$. Namely, to read off the word $w_b$ (as described in steps 1, 2 in the definition of the map $\Phi$), start at the upper right node and follow the adjacent edge. With a slight abuse of notation, define the action of permutations $c$ and $r$ on words of the length $n$: $c(\bb{a}_1\dots\bb{a}_n) = \bb{a}_{n} \bb{a}_{1}\dots \bb{a}_{n-1}$ and $r(\bb{a}_1\dots\bb{a}_n) = cI(\bb{a}_1\dots\bb{a}_n) = \bb{a}_{1}\bb{a}_{n}\dots\bb{a}_{2}$. Then it is straightforward that $c^{m}r^{p}\in C_{\Sn{n}}(b)$ iff $c^{m}r^{p}\in S(w_b)$, which proves the assertion.
\end{proof}

\subsection{Proof of Lemma \ref{lem:commutativity}}\label{app:lemma_commutativity}
\begin{proof}
    One checks directly that for $n\leqslant 5$ any $\xi\in \mathbb{C}[\mathfrak{b}(\mathcal{A})]_{n}$ verifies $\xi^{*} = \xi$. As a result, any $u\in C_{n}(\delta)$ verifies $u^{*} = u$, so $C_{n}(\delta)\subset B_{n}(\delta)$ is commutative, and so is any $C_{n}(\delta)\slash J^{(f)}$ and $C_{n}(\delta)\cap J^{(f)}$. To show that $C_{n}(\delta)$ is non-commutative for any $n \geqslant 6$ take $\zeta_n = [\bb{nsnpsp}][\bb{p}]^{n-6}$ such that $\zeta_n^{*} = [\bb{snspnp}][\bb{p}]^{n-6} \neq \zeta_n$, and compare $A_n e_{\zeta_n}$ with $e_{\zeta_n} A_n$. To do so, notice that $e_{\zeta_n} A_n = (A_n e_{\zeta_n^{*}})^{*}$, so we make use of Theorem \ref{thm:Laplace} and compare $\Delta(\zeta_n)$ with $\big(\Delta(\zeta_n^{*})\big)^{*}$. It is simple to check that $\Delta(\zeta_n) = 2(\delta + 1)\,\zeta_n + \dots$ and $\big(\Delta(\zeta_n^{*})\big)^{*} = 2\delta\,\zeta_n + \dots$, so $\Delta(\zeta_n) \neq \big(\Delta(\zeta_n^{*})\big)^{*}$ for any $\delta\in \mathbb{C}$.
    \vskip 4 pt
    
    Let us prove the second part. For the point {\it (i)}, note that bracelets which parametrise $C_{n}(\delta)\slash J^{(f)}$ with $f = 1,2$ contain no more that one letter $\bb{n}$ (equivalently, $\bb{s}$), which means that they are self-dual with respect to $(\cdot)^{*}$, so the quotient algebra is commutative. Now, for $f \geqslant 3$ we aim at comparing $A^{(2)}_n e_{\zeta_n}$ and $e_{\zeta_n} A^{(2)}_n$, where $A^{(2)}_n$ is the double-arc generalisation of $A_n$ defined in \eqref{eq:class_A_f}. With no analog of Theorem \ref{thm:Laplace} available at this point, we analyse diagrammatic expressions with the aid of the following formula, which follows from the definition of the average \eqref{eq:average} (see the proof of Theorem \ref{thm:Laplace}):
    \begin{equation}
        \text{for any $u\in C_{n}(\delta)$ and for any diagram $b\in B_{n}(\delta)$ one has}\quad\quad u \gamma_{b} = \gamma_{u b}\quad \text{and}\quad \gamma_{b}\, u = \gamma_{b u}\,.
    \end{equation}
    When applied to the products in question, one fixes a representative in $e_{\zeta_n}$ and attaches two arcs to the lower/upper row of $b$ in all possible ways. By collecting only the basis vectors for $\zeta_n$ and $\xi_n = [\bb{nsp}]^2[\bb{p}]^{n-6}$ one can obtain
    \begin{equation}
        A^{(2)}_n e_{\zeta_n} = (\delta^2 + 2\delta)\,e_{\zeta_n} + (2+\delta)\,e_{\xi_n} + \dots\,,\quad e_{\zeta_n} A^{(2)}_n = (\delta^2 + \delta + 2)\,e_{\zeta_n} + 2(\delta + 1)\,e_{\xi_n} + \dots\,,
    \end{equation}
    so $A^{(2)}_n e_{\zeta_n} \neq e_{\zeta_n} A^{(2)}_n$ for any $\delta \in \mathbb{C}$.
    \vskip 4 pt
    
    For the point {\it (ii)}, note that bracelets which parametrise $C_{n}(\delta)\cap J^{(f_{\mathrm{max}})}$ contains no more that one letter $\bb{p}$, which means that they are self-adjoint, so the algebra is commutative. Now, for $f \leqslant f_{\mathrm{max}} - 1$ we consider $\tilde{\zeta}_n = [\bb{nsnpsp}][\bb{ns}]^{l}[\bb{p}]^{r}$ (with $r = n - 2f_{\mathrm{max}}$ and $r + l + 6 = n$) and aim at comparing $A^{(f_{\mathrm{max}} - 1)}_n e_{\tilde{\zeta}_n}$ and $e_{\tilde{\zeta}_n} A^{(f_{\mathrm{max}} - 1)}_n$. Similarly to the previous case, we collect only the basis elements for $\tilde{\zeta}_n$ and $\tilde{\xi}_n = [\bb{nsp}]^2[\bb{ns}]^{l}[\bb{p}]^{r}$, which gives
    \begin{equation}
        A^{(f_{\mathrm{max}} - 1)}_n e_{\tilde{\zeta}_n} = \delta (\delta^2 + 2\delta + 8)\,e_{\tilde{\zeta}_n} + (\delta^2 + 2\delta + 8)\,e_{\tilde{\xi}_n} + \dots\,,\quad e_{\tilde{\zeta}_n} A^{(f_{\mathrm{max}} - 1)}_n = \delta(\delta^2 + \delta + 4)\,e_{\tilde{\zeta}_n} + 2(\delta^2 + \delta + 2)\,e_{\tilde{\xi}_n} + \dots\,,
    \end{equation}
    so $A^{(f_{\mathrm{max}} - 1)}_n e_{\tilde{\zeta}_n} \neq e_{\tilde{\zeta}_n} A^{(f_{\mathrm{max}} - 1)}_n$ for any $\delta \in \mathbb{C}$.
\end{proof}

\end{appendix}


\end{document}